\newtheorem{theorem}{Theorem}[section]
\newtheorem*{theorem-non}{Theorem}
\newtheorem{lemma}[theorem]{Lemma}
\newtheorem{proposition}[theorem]{Proposition}
\theoremstyle{definition}
\newtheorem{definition}[theorem]{Definition}
\newtheorem{example}[theorem]{Example}
\theoremstyle{remark}
\numberwithin{equation}{section}
\definecolor{gray}{rgb}{.5,.5,.5}
\definecolor{black}{rgb}{0,0,0}
\definecolor{blue}{rgb}{0,0,1}
\definecolor{red}{rgb}{1,0,0}
\definecolor{green}{rgb}{0,1,0}
\definecolor{yellow}{rgb}{1,1,.4}
\newcommand{\crossneg}{
\begin{tikzpicture}[baseline=-2]
\draw[white,line width=1.5pt,double=black,double distance=.5pt] (0,-0.1) -- (0.3,0.2);
\draw[white,line width=1.5pt,double=black,double distance=.5pt] (0,0.2) -- (0.3,-0.1);
\end{tikzpicture}}
\newcommand{\crosspos}{
\begin{tikzpicture}[baseline=-2]
\draw[white,line width=1.5pt,double=black,double distance=.5pt] (0,0.2) -- (0.3,-0.1);
\draw[white,line width=1.5pt,double=black,double distance=.5pt] (0,-0.1) -- (0.3,0.2);
\end{tikzpicture}}
\begin{document}

\title{The average genus of a 2-bridge knot is asymptotically linear}

\author{Moshe Cohen}
\address{Mathematics Department, State University of New York at New Paltz, New Paltz, NY 12561}
\email{cohenm@newpaltz.edu}

\author{Adam M. Lowrance}
\address{Department of Mathematics and Statistics, Vassar College, Poughkeepsie, NY 12604}
\email{adlowrance@vassar.edu}
\thanks{The second author was supported by NSF grant DMS-1811344.}

\begin{abstract}
Experimental work suggests that the Seifert genus of a knot grows linearly with respect to the crossing number of the knot. In this article, we use a billiard table model for $2$-bridge or rational knots to show that the average genus of a $2$-bridge knot with crossing number $c$ asymptotically approaches $c/4+1/12$.
\end{abstract}

\maketitle

\section{Introduction}

The Seifert genus $g(K)$ of a knot $K$ in $S^3$ is the minimum genus of any oriented surface embedded in $S^3$ whose boundary is the knot $K$. Dunfield et al. \cite{Dun:knots} presented experimental data that suggests the Seifert genus of a knot grows linearly with respect to crossing number. Using a billiard table model for $2$-bridge knots developed by Koseleff and Pecker \cite{KosPec3, KosPec4}, Cohen \cite{Coh:lower} gave a lower bound on the average genus of a $2$-bridge knot. 

In this paper, we compute the average genus $\overline{g}_c$ of $2$-bridge knots with crossing number $c$ and show that $\overline{g}_c$ is asymptotically linear with respect to $c$. Let $\mathcal{K}_c$ be the set of unoriented $2$-bridge knots with $c$ crossings where only one of a knot and its mirror image is in the set. For example $|\mathcal{K}_3|=1$  and contains one of the right-handed or left-handed trefoil. Define the average genus $\overline{g}_c$ by
\begin{equation}
\label{eq:avgenus}
\overline{g}_c = \frac{\sum_{K\in\mathcal{K}_c} g(K)}{|\mathcal{K}_c|}.
\end{equation} 
Since the genus of a knot and the genus of its mirror image are the same, $\overline{g}_c$ is independent of the choice of each knot or its mirror image as elements in $\mathcal{K}_c$.

\begin{theorem}
\label{thm:mainformula}
Let $c\geq 3$. The average genus $\overline{g}_c$ of a $2$-bridge knot with crossing number $c$ is 
\[\overline{g}_c  = \frac{c}{4} + \frac{1}{12} + \varepsilon(c),\]
where
\[\varepsilon (c) =  \begin{cases}
\displaystyle\frac{2^{\frac{c-4}{2}} - 4}{12(2^{c-3}+2^{\frac{c-4}{2}})} & \text{if } c\equiv 0\text{ mod }4,\\
\displaystyle \frac{1}{3\cdot 2^{\frac{c-3}{2}}} & \text{if } c\equiv 1\text{ mod }4,\\
\displaystyle \frac{2^{\frac{c-4}{2}}+3c-11}{12(2^{c-3}+2^{\frac{c-4}{2}}-1)}& \text{if } c\equiv 2\text{ mod }4, \text{ and}\\
\displaystyle \frac{2^{\frac{c+1}{2}}+11-3c}{12(2^{c-3}+2^{\frac{c-3}{2}}+1)} & \text{if } c\equiv 3\text{ mod }4.
\end{cases}\]
Since $\varepsilon(c)\to 0$ as $c\to \infty$, the average genus $\overline{g}_c$ approaches $\frac{c}{4}+\frac{1}{12}$ as $c \to \infty$.
\end{theorem}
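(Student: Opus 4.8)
The plan is to reduce the average‑genus computation to a generating‑function count over continued‑fraction expansions.

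\smallskip

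\emph{Setting up the model.} First I would invoke the standard dictionary for $2$‑bridge knots via \emph{even} continued fractions: each $2$‑bridge knot $K$ corresponds to a sequence $(2b_1,\dots,2b_{2g})$ of nonzero even integers, whose length $2g$ is twice the Seifert genus $g=g(K)$, and whose crossing number is
\[c(K)=2\sum_{i=1}^{2g}\lvert b_i\rvert-v,\qquad v=\#\{\,1\le i\le 2g-1:\operatorname{sgn}b_i\ne\operatorname{sgn}b_{i+1}\,\}.\]
Two such sequences represent the same unoriented $2$‑bridge knot exactly when one is obtained from the other by reversal $r\colon(b_1,\dots,b_{2g})\mapsto(b_{2g},\dots,b_1)$ (the $q\leftrightarrow q^{-1}$ symmetry), and mirror image corresponds to global negation $s\colon(b_1,\dots,b_{2g})\mapsto(-b_1,\dots,-b_{2g})$. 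So, writing $S_c$ for the set of such sequences with crossing number $c$ and $G=\langle r,s\rangle\cong(\mathbb Z/2)^2$, we have $\mathcal K_c=S_c/G$, with $c$ and $g$ both $G$‑invariant.

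\smallskip

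\emph{Reducing to a count.} Since no $b_i$ is $0$, the element $s$ has no fixed point on $S_c$, and $rs$ fixes only the ``anti‑palindromic'' sequences ($b_{2g+1-i}=-b_i$ for all $i$), while $r$ fixes the palindromic ones ($b_{2g+1-i}=b_i$). Every $G$‑orbit then has size $4$ except the palindromic and anti‑palindromic orbits, which have size $2$ and contain exactly two palindromes (resp.\ anti‑palindromes). An elementary orbit count gives
\[\lvert\mathcal K_c\rvert=\tfrac14(N+P+Q),\qquad \sum_{K\in\mathcal K_c}g(K)=\tfrac14(N'+P'+Q'),\]
where $N$, $P$, $Q$ count the arbitrary, palindromic, and anti‑palindromic elements of $S_c$, and $N'$, $P'$, $Q'$ are the same sums with each term weighted by its genus $g$. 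It then suffices to evaluate these six quantities.

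\smallskip

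\emph{The generating functions.} I would encode an entry $2b_i$ by $x^{2\lvert b_i\rvert}$ and each sign change by $x^{-1}$; summing over the starting sign and all sign patterns, the generating function in $x$ tracking $c$ for the length‑$2g$ sequences is $F_g(x)=\dfrac{2x^{2g+1}}{(1-x)^{2g}(1+x)}$, whence $\sum_{g\ge1}F_g(x)=\dfrac{2x^3}{(1+x)(1-2x)}$ and $\sum_{g\ge1}gF_g(x)=\dfrac{2x^3(1-x)^2}{(1+x)(1-2x)^2}$; extracting $[x^c]$ by partial fractions yields $N$ and $N'$ in closed form, the double pole at $x=\tfrac12$ being exactly what produces the $\tfrac c4$ term. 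A palindrome is freely determined by its first half $(b_1,\dots,b_g)$, and one checks that then $c$ is even with $c/2=2\sum_{i\le g}\lvert b_i\rvert-(\text{its number of sign changes})$; hence $P,P'$ are the coefficients of $x^{c/2}$ in $\sum_g G_g(x)$ and $\sum_g gG_g(x)$ with $G_g(x)=\dfrac{2x^{g+1}}{(1-x)^g(1+x)}$, and an anti‑palindrome behaves identically except for one forced sign change at the center, so $Q,Q'$ use $x^{(c+1)/2}$ with $c$ odd. Assembling $\overline g_c=(N'+P'+Q')/(N+P+Q)$ (a by‑product is that $\lvert\mathcal K_c\rvert$ reproduces the Ernst--Sumners count) and simplifying in each residue class of $c$ modulo $4$ gives exactly the stated formula; since the correction terms are of size $O(c\,2^{c/2})$ over a denominator of size $2^{c-3}$, we get $\varepsilon(c)\to0$.

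\smallskip

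\emph{Main obstacle.} The generating‑function algebra is routine once set up; the delicate parts are (i) pinning down the dictionary in the first step — in particular the crossing‑number formula above and the precise way the $q\leftrightarrow q^{-1}$ and mirror identifications act — so that $S_c/G\to\mathcal K_c$ is genuinely a bijection, and (ii) the bookkeeping of the palindromic and anti‑palindromic strata, which is the sole source of the error terms $\varepsilon(c)$ and of the four‑way split mod $4$.
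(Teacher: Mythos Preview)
Your approach is correct and genuinely different from the paper's. The paper works through the billiard-table word model: each word in $T(c)$ is converted to an alternating diagram, Seifert's algorithm is applied, and the total Seifert-circle counts $s(c)$ and $s_p(c)$ are obtained via recursions (Theorems~3.3 and~3.6) which are then solved in closed form; the genus appears only at the end via $g=\tfrac12(1+c-s)$. You instead parametrize directly by the even continued-fraction expansion, where the genus is visible as half the length, and extract everything from the rational generating functions $\sum_g F_g$ and $\sum_g G_g$ by partial fractions. Your route avoids Seifert circles and recursions entirely and is closer in spirit to the independent proof of Suzuki--Tran cited in the introduction.

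Two remarks. First, a small mislabeling: with the plus-convention continued fraction one has $[a_n,\dots,a_1]=p/q^*$ with $qq^*\equiv(-1)^{n-1}\pmod p$, so for even length $r$ sends $q\mapsto -q^{-1}$ and it is $rs$ (not $r$) that realizes $q\mapsto q^{-1}$. This does not affect your Burnside count, since you correctly quotient by the full group $\langle r,s\rangle$, but it is worth stating accurately. Second, the point you flag as the main obstacle really is the crux: the crossing-number identity $c=2\sum|b_i|-v$ and the statement that $S_c/G\to\mathcal K_c$ is a bijection are exactly what makes the model work, and a referee would want either a precise citation or a short proof (e.g.\ by converting the signed even expansion to the canonical all-positive one and checking that each sign change costs one crossing). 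The paper, by contrast, is self-contained on this front: its billiard model comes with an explicit alternating diagram, and genus is read off via Seifert's algorithm using only the classical Crowell--Murasugi theorem.
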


Suzuki and Tran \cite{SuzukiTran} independently proved this formula for $\overline{g}_c$. Ray and Diao \cite{RayDiao} expressed $\overline{g}_c$ using sums of products of certain binomial coefficients.  Baader, Kjuchukova, Lewark, Misev, and Ray \cite{BKLMR} previously showed that if $c$ is sufficiently large, then $\frac{c}{4} \leq \overline{g}_c$. 

The proof of Theorem \ref{thm:mainformula} uses the Chebyshev billiard table model for knot diagrams of Koseleff and Pecker \cite{KosPec3,KosPec4} as presented by Cohen and Krishnan \cite{CoKr} and with Even-Zohar \cite{CoEZKr}. This model yields an explicit enumeration of the elements of $\mathcal{K}_c$ as well as an alternating diagram in the format of Figure \ref{fig:alternating} for each element of $\mathcal{K}_c$. Murasugi \cite{Mur:genus} and Crowell \cite{Cro:genus} proved that the genus of an alternating knot is the genus of the surface obtained by applying Seifert's algorithm \cite{Sei} to an alternating diagram of the knot. The proof of Theorem \ref{thm:mainformula} proceeds by applying Seifert's algorithm to the alternating diagrams obtained from our explicit enumeration of $\mathcal{K}_c$ and averaging the genera of those surfaces.

This paper is organized as follows. In Section \ref{sec:background}, we recall how the Chebyshev billiard table model for $2$-bridge knots diagrams can be used to describe the set $\mathcal{K}_c$ of $2$-bridge knots. In Section \ref{sec:recursions}, we find recursive formulas that allow us to count the total number of Seifert circles among all $2$-bridge knots with crossing number $c$. Finally in Section \ref{sec:formulas}, we find a closed formula for the number of Seifert circles among all $2$-bridge knots and use that to prove Theorem \ref{thm:mainformula}.

\section{Background}
\label{sec:background}

The average genus of $2$-bridge knots with crossing number $c$ is the quotient of the sum of the genera of all $2$-bridge knots with crossing number $c$ and the number of $2$-bridge knots with crossing number $c$. Ernst and Sumners \cite{ErnSum} proved formulas for the number $|\mathcal{K}_c|$ of $2$-bridge knots.

\begin{theorem}[Ernst-Sumners \cite{ErnSum}, Theorem 5]
\label{thm:ernstsumners}
The number $|\mathcal{K}_c|$ of 2-bridge knots with $c$ crossings where chiral pairs are \emph{not} counted separately is given by
\[
|\mathcal{K}_c| = 
\begin{cases}
\frac{1}{3}(2^{c-3}+2^{\frac{c-4}{2}}) & \text{ for }4 \geq c\equiv 0 \text{ mod }4,\\
\frac{1}{3}(2^{c-3}+2^{\frac{c-3}{2}}) & \text{ for }5\geq c\equiv 1 \text{ mod }4, \\
\frac{1}{3}(2^{c-3}+2^{\frac{c-4}{2}}-1) & \text{ for }6 \geq c\equiv 2 \text{ mod }4, \text{ and}\\
\frac{1}{3}(2^{c-3}+2^{\frac{c-3}{2}}+1) & \text{ for }3\geq c\equiv 3 \text{ mod }4.
\end{cases}
\]
\end{theorem}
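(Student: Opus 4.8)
The plan is to pass from topology to arithmetic using Schubert's classification of $2$-bridge knots, and then to count the resulting equivalence classes of fractions with Burnside's lemma. Recall that unoriented $2$-bridge knots correspond bijectively to fractions $p/q$ with $p$ odd, $0<q<p$, $\gcd(p,q)=1$, taken modulo $q\sim q^{-1}\pmod p$; that the mirror image of the $2$-bridge knot $b(p,q)$ is $b(p,-q)$; and that the crossing number equals $\sum_i a_i$ for the continued fraction $p/q=[a_1,\dots,a_n]$ with all $a_i\ge 1$ and $a_n\ge 2$ realizing the reduced alternating diagram (which is minimal since $2$-bridge diagrams are alternating). First I would record the purely combinatorial count. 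Since a reduced fraction with $0<q<p$ has a unique such continued fraction, and a composition of $c$ has last part $\ge 2$ in exactly $2^{c-2}$ of the $2^{c-1}$ cases, there are exactly $F_c=2^{c-2}$ fractions of crossing number $c$, encoding all $2$-bridge knots and links together before any identifications.

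Next I would isolate the knots by computing the parity of $p$ from the continued fraction. Writing $M_i=\begin{pmatrix}a_i&1\\1&0\end{pmatrix}$, the numerator $p$ is the top-left entry of $\prod_i M_i$, so $p\bmod 2$ depends only on the parities of the parts and is governed by a walk in $\mathrm{GL}_2(\mathbf F_2)\cong S_3$ whose two steps are the reductions of an odd part and of an even part. These two steps generate the group, so the walk equidistributes, and exactly four of the six elements of $\mathrm{GL}_2(\mathbf F_2)$ have top-left entry $1$; a transfer-matrix computation then shows that the number $F_c^{\mathrm{odd}}$ of crossing-$c$ fractions with $p$ odd equals $\tfrac13\,2^{c-1}$ up to a bounded periodic correction. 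This is the source of the factor $\tfrac13$ and of the leading term, since the generic symmetry orbit below has size $4$ and $\tfrac14 F_c^{\mathrm{odd}}\approx\tfrac13\,2^{c-3}$.

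With the knots singled out, I would apply Burnside's lemma to the Klein four-group $G=\langle\rho,\mu\rangle\cong \mathbf Z/2\times\mathbf Z/2$ acting on these fractions, where $\rho\colon q\mapsto q^{-1}$ is reversal and $\mu\colon q\mapsto -q$ is mirroring. Since $p$ is odd, $\mu$ has no fixed points (because $2q\equiv 0$ forces $q\equiv 0$), so
\[|\mathcal K_c|=\tfrac14\bigl(F_c^{\mathrm{odd}}+N_c^{+}+N_c^{-}\bigr),\]
where $N_c^{+}=\#\{q:q^2\equiv 1\}$ (fixed by $\rho$) and $N_c^{-}=\#\{q:q^2\equiv -1\}$ (fixed by $\rho\mu$, i.e.\ the amphichiral knots), both counted among crossing-$c$ fractions with $p$ odd. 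The key point is that $q^2\equiv\pm1\pmod p$ is equivalent to the continued fraction $[a_1,\dots,a_n]$ being palindromic (respectively anti-palindromic), since reversing the fraction reverses the continued fraction; such symmetric compositions of $c$ number $\Theta(2^{c/2})$, and carrying the same $\mathrm{GL}_2(\mathbf F_2)$ parity analysis through the symmetric case produces the terms $\tfrac13\,2^{(c-4)/2}$ and $\tfrac13\,2^{(c-3)/2}$.

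Finally I would assemble the four congruence classes. The residue of $c$ modulo $4$ controls the parity of the half-length of a palindromic composition of $c$ (hence whether a central part is forced, and whether a symmetric fraction can have odd numerator), together with the small number of exceptional self-paired fractions; tracking these yields the bounded constants $-1$ and $+1$ in the statement. The main obstacle is precisely this exact fixed-point bookkeeping: one must enumerate palindromic and anti-palindromic continued fractions of fixed digit-sum $c$ subject to the $p$-odd constraint and to the normalization $a_n\ge 2$, which interacts awkwardly with reversal, and then check that the three Burnside contributions combine to the stated closed form in each class. The asymptotic statement $|\mathcal K_c|\sim\tfrac13\,2^{c-3}$ drops out of the leading term as soon as the $\Theta(2^{c/2})$ corrections are seen to be subdominant, but pinning down the exact constants is where essentially all of the work lies.
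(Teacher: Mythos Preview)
The paper does not prove this theorem at all: it is quoted verbatim from Ernst and Sumners \cite{ErnSum} as background, with no argument supplied. So there is no ``paper's own proof'' to compare against.

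Your outline is the standard route to this formula (Schubert's classification plus Burnside counting on continued fractions), and the asymptotic part is fine. But as you yourself say, it is only a plan: the exact determination of $F_c^{\mathrm{odd}}$, $N_c^{+}$, and $N_c^{-}$ in each residue class, together with the interaction between the normalization $a_n\ge 2$ and reversal, is where all the content is, and you have not carried it out. In particular, reversing $[a_1,\dots,a_n]$ produces $[a_n,\dots,a_1]$, which no longer satisfies your normalization when $a_1=1$, so your action of $\rho$ is not literally on the set you defined; you need either to symmetrize the normalization or to pass to the two equivalent continued fraction representatives $[a_1,\dots,a_n]$ and $[a_1,\dots,a_n-1,1]$ and track both. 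Also, the sign in ``reversing the continued fraction corresponds to $q\mapsto q^{-1}$'' depends on the parity of $n$ (one gets $qq'\equiv(-1)^{n-1}\pmod p$), so the identification of $N_c^{\pm}$ with palindromes versus anti-palindromes must be split according to the parity of the length, not just the residue of $c$.

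It is worth noting that the paper's own machinery gives a different and fully worked-out proof as a byproduct: Theorem~\ref{thm:list} yields $|\mathcal K_c|=\tfrac12\bigl(t(c)+t_p(c)\bigr)$, and Propositions~\ref{prop:countterms} and~\ref{prop:numberpalindromic} give closed Jacobsthal formulas for $t(c)$ and $t_p(c)$. Substituting those and splitting on $c\bmod 4$ recovers the Ernst--Sumners expressions exactly; this billiard-word enumeration replaces your Burnside fixed-point bookkeeping with a direct count of a canonical (partially double-counted) set of representatives.
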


A billiard table diagram of a knot is constructed as follows. Let $a$ and $b$ be relatively prime positive integers with $a<b$, and consider an $a\times b$ grid. Draw a sequence of line segments along diagonals of the grid as follows. Start at the bottom left corner of the grid with a line segment that bisects the right angle of the grid. Extend that line segment until it reaches an outer edge of the grid, and then start a new segment that is reflected $90^\circ$. Continue in this fashion until a line segment ends in a corner of the grid. Connecting the beginning of the first line segment with the end of the last line segment results in a piecewise linear closed curve in the plane with only double-point self-intersections. If each such double-point self-intersection is replaced by a crossing, then one obtains a \emph{billiard table diagram} of a knot. See Figure \ref{fig:billiard}.

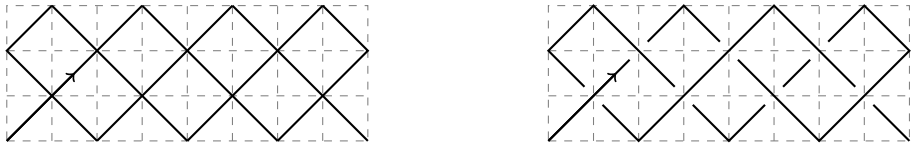
\begin{figure}[h]
\begin{tikzpicture}[scale=.6]
\draw[dashed, white!50!black] (0,0) rectangle (8,3);
\foreach \x in {1,...,7}
	{\draw[dashed, white!50!black] (\x,0) -- (\x,3);}
\foreach \x in {1,2}
	{\draw[dashed, white!50!black] (0,\x) -- (8, \x);}
\foreach \x in {0,2,4}
	{\draw[thick] (\x,0) -- (\x+3,3);
	\draw[thick] (\x+1,3) -- (\x+4,0);}
\draw[thick] (1,3) -- (0,2) -- (2,0);
\draw[thick] (6,0) -- (8,2) -- (7,3);
\draw[thick, ->] (0,0) -- (1.5,1.5);

\begin{scope}[xshift = 12 cm]
	\draw[dashed, white!50!black] (0,0) rectangle (8,3);
	\foreach \x in {1,...,7}
		{\draw[dashed, white!50!black] (\x,0) -- (\x,3);}
	\foreach \x in {1,2}
		{\draw[dashed, white!50!black] (0,\x) -- (8, \x);}
	\draw[thick] (0,0) -- (1.8,1.8);
	\draw[thick] (2.2, 2.2) -- (3,3) -- (3.8,2.2);
	\draw[thick] (4.2,1.8) -- (6,0) -- (8,2) -- (7,3) -- (6.2,2.2);
	\draw[thick] (5.8,1.8) -- (5.2,1.2);
	\draw[thick] (4.8,0.8) -- (4,0) -- (3.2,0.8);
	\draw[thick] (2.8,1.2) -- (1,3) -- (0,2) -- (0.8,1.2);
	\draw[thick] (1.2,0.8) -- (2,0) -- (5,3) -- (6.8,1.2);
	\draw[thick] (7.2, 0.8) -- (8,0);	
	\draw[thick, ->] (0,0) -- (1.5,1.5);	
\end{scope}		
\end{tikzpicture}
\caption{A billiard table projection and a billiard table diagram of a knot on a $3\times 8$ grid. The diagram corresponds to the word $+-++ -{}-+$. We do not draw the arc connecting the ends but understand it to be present.}
\label{fig:billiard}
\end{figure}

Billiard table diagrams on a $3\times b$ grid have bridge number either one or two, that is, such a knot is either the unknot or a $2$-bridge knot. In a $3\times b$ billiard table diagram, there is one crossing on each vertical grid line except the first and the last. A string of length $b-1$ in the symbols $\{+,-\}$ determines a $2$-bridge knot or the unknot, as follows. A crossing corresponding to a $+$ looks like $\tikz[baseline=.6ex, scale = .4]{
\draw (0,0) -- (1,1);
\draw (0,1) -- (.3,.7);
\draw (.7,.3) -- (1,0);
}
~$, and a crossing corresponding to a $-$ looks like $\tikz[baseline=.6ex, scale = .4]{
\draw (0,0) -- (.3,.3);
\draw (.7,.7) -- (1,1);
\draw (0,1) -- (1,0);
}
~$. Figure \ref{fig:billiard} shows an example.

A given $2$-bridge knot has infinitely many descriptions as strings of various lengths in the symbols $\{+,-\}$. Cohen, Krishnan, and Evan-Zohar's work \cite{CoKr, CoEZKr} lets us describe $2$-bridge knots in this manner but with more control on the number of strings representing a given $2$-bridge knot. 
\begin{definition}
Define the \emph{partially double-counted set $T(c)$ of $2$-bridge words with crossing number $c$} as follows. Each word in $T(c)$ is a word in the symbols $\{+,-\}$. If $c$ is odd, then a word $w$ is in $T(c)$ if and only if it is of the form
\[
(+)^{\varepsilon_1}(-)^{\varepsilon_2}(+)^{\varepsilon_3}(-)^{\varepsilon_4}\ldots(-)^{\varepsilon_{c-1}}(+)^{\varepsilon_c}, \]
where $\varepsilon_i\in\{1,2\}$ for $i\in\{1,\ldots,c\}$, $\varepsilon_1=\varepsilon_c=1$, and the length of the word $\ell=\sum_{i=1}^{c}\varepsilon_i \equiv 1$ mod $3$. Similarly, if $c$ is even, then a word $w$ is in $T(c)$ if and only if it is of the form
\[(+)^{\varepsilon_1}(-)^{\varepsilon_2}(+)^{\varepsilon_3}(-)^{\varepsilon_4}\ldots(+)^{\varepsilon_{c-1}}(-)^{\varepsilon_c},\]
where $\varepsilon_i\in\{1,2\}$ for $i\in\{1,\ldots,c\}$, $\varepsilon_1=\varepsilon_c=1$, and the length of the word $\ell=\sum_{i=1}^{c}\varepsilon_i \equiv 1$ mod $3$.
\end{definition}
The set $T(c)$ is described as partially double-counted because every $2$-bridge knot is represented by exactly one or two words in $T(c)$, as described in Theorem \ref{thm:list} below. Although the billiard table diagram associated with $w$ has $\ell$ crossings, there is an alternating diagram associated with $w$ that has $c$ crossings, and hence we use the $T(c)$ notation.

The \emph{reverse} $r(w)$ of a word $w$ of length $\ell$ is a word whose $i$th entry is the $(\ell - i +1)$st entry of $w$; in other words, $r(w)$ is just $w$ backwards. The \emph{reverse mirror} $\overline{r}(w)$ of a word $w$ of length $\ell$ is the word of length $\ell$ where each entry disagrees with the corresponding entry of $r(w)$; in other words, $\overline{r}(w)$ is obtained from $w$ by reversing the order and then changing every $+$ to a $-$ and vice versa. 
\begin{definition}
The subset $T_p(c)\subset T(c)$ of \emph{words of palindromic type} consists of words $w\in T(c)$ such that $w=r(w)$ when $c$ is odd and $w=\overline{r}(w)$ when $c$ is even.
\end{definition}
\noindent For example, the word $w=+ -{}-+$ is the only word in $T_p(3)$, and the word $w=+ - + -$ is the only word in $T_p(4)$. 

The following theorem says exactly which $2$-bridge knots are represented by two words in $T(c)$ and which $2$-bridge knots are represented by only one word in $T(c)$. The theorem is based on work by Schubert \cite{Sch} and  Koseleff and Pecker \cite{KosPec4}. The version of the theorem we state below comes from Lemma 2.1 and Assumption 2.2 in \cite{Coh:lower}.

\begin{theorem}
\label{thm:list}
Let $c\geq 3$. Every $2$-bridge knot is represented by a word in $T(c)$. If a $2$-bridge knot $K$ is represented by a word $w$ of palindromic type, that is, a word in $T_p(c)$, then $w$ is the only word in $T(c)$ that represents $K$. If a $2$-bridge knot $K$ is represented by a word $w$ that is not in $T_p(c)$, then there are exactly two words in $T(c)$ that represent $K$, namely $w$ and $r(w)$ when $c$ is odd or $w$ and $\overline{r}(w)$ when $c$ is even.
\end{theorem}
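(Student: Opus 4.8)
The plan is to translate the combinatorial data of a word $w\in T(c)$ into Schubert's arithmetic classification of $2$-bridge knots and then to track how the two word operations $r$ and $\overline{r}$ act on that arithmetic data. Recall that a $2$-bridge knot is determined by a fraction $p/q$ with $p$ odd, $\gcd(p,q)=1$, and $0<q<p$; that $K(p,q)$ and $K(p,q')$ are the same unoriented knot precisely when $q'\equiv q^{\pm1}\pmod p$ (Schubert \cite{Sch}); and that the mirror image of $K(p,q)$ is $K(p,p-q)$. First I would make explicit the dictionary, implicit in \cite{KosPec4} and recorded in the form of Lemma 2.1 of \cite{Coh:lower}, that assigns to each word $w\in T(c)$ a fraction $p/q$ via the continued fraction whose partial quotients are the block lengths $\varepsilon_1,\dots,\varepsilon_c$ together with the alternating sign pattern built into the $+/-$ blocks. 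The constraint $\ell=\sum_{i=1}^c\varepsilon_i\equiv 1\pmod 3$ is exactly the condition that the associated Chebyshev billiard curve closes into a knot on a $3\times(\ell+1)$ grid, so that $T(c)$ is in bijection with the admissible Chebyshev words, while the reduced alternating diagram built from the $c$ blocks realizes the crossing number $c$.

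Surjectivity, namely that every $2$-bridge knot arises, would then reduce to a number-theoretic statement: every fraction $p/q$ in Schubert's normal form admits a continued fraction expansion of the constrained type (alternating signs, all partial quotients in $\{1,2\}$, first and last blocks equal to $1$, and length $\equiv 1\pmod 3$). This is the content I would extract from the Chebyshev construction of \cite{KosPec4}. Concretely, I would run the subtractive Chebyshev expansion algorithm on $p/q$ and verify that it terminates in a word of the required form, which simultaneously yields existence and, with a little care, the near-uniqueness of the expansion up to the symmetry discussed next.

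The heart of the argument is the multiplicity count, and here I would invoke the classical behavior of continued fractions under reversal. Reversing the partial quotients of a continued fraction representing $p/q$ yields a fraction $p/q^{\ast}$ with $q\,q^{\ast}\equiv\pm1\pmod p$, the sign being governed by the parity of the number of blocks $c$; thus $w\mapsto r(w)$ realizes $q\mapsto \pm q^{-1}\pmod p$. Since swapping $+\leftrightarrow -$ realizes $q\mapsto p-q$, the composite operation $\overline{r}$ realizes $q\mapsto \mp q^{-1}\pmod p$. For odd $c$ the sign is $+$, so by Schubert's criterion $r(w)$ already represents $K(p,q^{-1})=K(p,q)=K$, whereas for even $c$ the sign is $-$ and $r(w)$ represents the mirror of $K$, so that $K$ itself is recovered by applying the mirror as well, i.e.\ by passing to $\overline{r}(w)$. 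In both parities $K$ is therefore represented in $T(c)$ by $w$ and by its reverse-type partner, and the near-uniqueness from the previous step guarantees these are the only candidates.

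Finally, a word coincides with its reverse-type partner exactly when $w=r(w)$ (odd $c$) or $w=\overline{r}(w)$ (even $c$), that is, exactly when $w\in T_p(c)$; arithmetically this is the self-inverse case $q^2\equiv\pm1\pmod p$. Hence a palindromic-type word is the unique representative of its knot, while a non-palindromic word is paired with a genuinely distinct second representative, giving multiplicity two. I expect the main obstacle to be the sign-and-parity bookkeeping in the reversal step: pinning down the exponent in $q\,q^{\ast}\equiv\pm1\pmod p$, confirming that it forces precisely the $r$ versus $\overline{r}$ dichotomy between odd and even $c$, and checking that the constrained expansion is unique enough that no third word in $T(c)$ can represent $K$. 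Since the statement is assembled from Lemma 2.1 and Assumption 2.2 of \cite{Coh:lower}, I would cross-check each step against those results.
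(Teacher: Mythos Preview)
The paper does not actually prove Theorem~\ref{thm:list}; it is stated with attribution only, the text immediately preceding it reading ``The theorem is based on work by Schubert \cite{Sch} and Koseleff and Pecker \cite{KosPec4}. The version of the theorem we state below comes from Lemma 2.1 and Assumption 2.2 in \cite{Coh:lower}.'' So there is no in-paper argument to compare your proposal against; the authors are quoting a result from the literature rather than proving it.

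That said, your outline is the right shape for a proof and is faithful to how the cited sources obtain the statement. The dictionary sending $w\in T(c)$ to a Schubert fraction via the alternating-sign continued fraction with partial quotients $\varepsilon_1,\dots,\varepsilon_c$ is exactly the mechanism underlying \cite{KosPec4} and \cite{Coh:lower}, surjectivity is the Chebyshev existence statement of Koseleff--Pecker, and the multiplicity count via Schubert's criterion $q'\equiv q^{\pm1}\pmod p$ together with the classical reversal identity $q\,q^{\ast}\equiv(-1)^{c-1}\pmod p$ is precisely what forces the $r$ versus $\overline{r}$ dichotomy according to the parity of $c$. Your self-identified obstacle, the sign bookkeeping, is genuine but routine: once you fix a convention for the signed continued fraction you get the determinant identity $p_cq_{c-1}-p_{c-1}q_c=(-1)^{c}$ (or its negative), and the parity of $c$ determines whether plain reversal already lands in the Schubert class of $K$ or whether one must compose with the mirror $q\mapsto p-q$, i.e.\ pass to $\overline{r}(w)$. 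The one point worth tightening is the ``near-uniqueness'' claim: you should state explicitly that the constrained expansion (all $\varepsilon_i\in\{1,2\}$, $\varepsilon_1=\varepsilon_c=1$, alternating signs, length $\equiv1\pmod3$) is unique for a given $p/q$, so that the only freedom in $T(c)$ is the choice between the two Schubert representatives $q$ and $q^{-1}\pmod p$; without that, you have not ruled out a third word.
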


A billiard table diagram associated with a word $w$ in $T(c)$ is not necessarily alternating; however the billiard table diagram associated with $w$ can be transformed into an alternating diagram $D$ of the same knot as follows. A \emph{run} in $w$ is a subword of $w$ consisting of all the same symbols (either all $+$ or all $-$) that is not properly contained in a single-symbol subword of longer length. By construction, if $w\in T(c)$, then it is made up of $c$ runs all of length one or two. The run $+$ is replaced by $\sigma_1$, the run $++$ is replaced by $\sigma_2^{-1}$, the run $-$ is replaced by $\sigma_2^{-1}$ and the run $-{}-$ is replaced by $\sigma_1$, as summarized by pictures in Table \ref{tab:wtoD}.

The left side of the diagram has a strand entering from the bottom left and a cap on the top left. If the last term is $\sigma_1$, then the right side of the diagram has a strand exiting to the bottom right and a cap to the top right, and if the last term is $\sigma_2^{-1}$, then the right side of the diagram has a strand exiting to the top right and a cap on the bottom right. See Figure \ref{fig:alternating} for an example. Theorem 2.4 and its proof in \cite{Coh:lower} explain this correspondence.

\begin{center}
\begin{table}[h]
\begin{tabular}{|c||c|c|c|c|}
\hline
&&&&\\
Run in billiard table diagram word $w$ 				& $(+)^1$			& $(+)^2$					& $(-)^1$					& $(-)^2$			\\
&&&&\\
\hline
&&&&\\
Crossing in alternating diagram $D$		& $\sigma_1$	& $\sigma_2^{-1}$	& $\sigma_2^{-1}$	& $\sigma_1$	\\
&&&&\\
&& $\crossneg$ & $\crossneg$ &\\
&$\crosspos$ &&& $\crosspos$ \\
&&&&\\
\hline
\end{tabular}
\caption{Transforming a billiard table diagram into an alternating diagram, as seen in \cite[Table 1]{Coh:lower}.}
\label{tab:wtoD}
\end{table}
\end{center}

\begin{figure}[h]
\begin{tikzpicture}[scale=.6]

\draw[dashed, white!50!black] (0,0) rectangle (8,3);
\foreach \x in {1,...,7}
	{\draw[dashed, white!50!black] (\x,0) -- (\x,3);}
\foreach \x in {1,2}
	{\draw[dashed, white!50!black] (0,\x) -- (8, \x);}
\draw[thick] (0,0) -- (1.8,1.8);
\draw[thick] (2.2, 2.2) -- (3,3) -- (3.8,2.2);
\draw[thick] (4.2,1.8) -- (6,0) -- (8,2) -- (7,3) -- (6.2,2.2);
\draw[thick] (5.8,1.8) -- (5.2,1.2);
\draw[thick] (4.8,0.8) -- (4,0) -- (3.2,0.8);
\draw[thick] (2.8,1.2) -- (1,3) -- (0,2) -- (0.8,1.2);
\draw[thick] (1.2,0.8) -- (2,0) -- (5,3) -- (6.8,1.2);
\draw[thick] (7.2, 0.8) -- (8,0);	
\draw[thick, ->] (0,0) -- (1.5,1.5);			

\begin{scope}[xshift=12cm, thick, rounded corners = 2mm]
	\draw[->] (0,0) -- (1.5,1.5);
	\draw (0,0) -- (1.8,1.8);
	\draw (2.2,2.2) -- (3,3) -- (4.8,1.2);
	\draw (5.2,0.8) -- (6,0) -- (8,2) -- (7,3) -- (5,3) -- (4.2,2.2);
	\draw (3.8,1.8) -- (3,1) -- (1,3) -- (0,2) -- (0.8,1.2);
	\draw (1.2,0.8) -- (2,0) -- (4,0) -- (6,2) -- (6.8,1.2);
	\draw (7.2,0.8) -- (8,0);
\end{scope}
\end{tikzpicture}
\caption{The billiard table diagram knot corresponding to the word $+-++ -{}-+$ has alternating diagram $\sigma_1\sigma_2^{-2}\sigma_1^2$. }
\label{fig:alternating}
\end{figure}
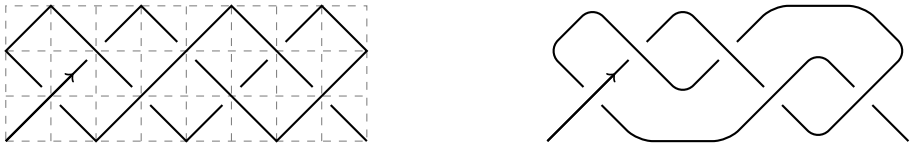

Murasugi \cite{Mur:genus} and Crowell \cite{Cro:genus} proved that the genus of an alternating knot $K$ is the genus of the Seifert surface obtained from Seifert's algorithm on an alternating diagram of $K$. Therefore, the average genus $\overline{g}_c$ is
\[ \overline{g}_c = \frac{1}{2}\left(1 + c - \overline{s}_c \right),\]
where $\overline{s}_c$ is the average number of Seifert circles in the alternating diagrams of all $2$-bridge knots with crossing number $c$. In Section \ref{sec:recursions}, we find recursive formulas for the total number of Seifert circles in the alternating diagrams associated with words in $T(c)$ and $T_p(c)$, named $s(c)$ and $s_p(c)$, respectively. Theorem \ref{thm:list} implies that
\begin{equation}
\label{eq:avseifert}
 \overline{s}_c = \frac{s(c) + s_p(c)}{2|\mathcal{K}_c|}.
 \end{equation}

Seifert's algorithm uses the orientation of a knot diagram to construct a Seifert surface. Lemma 3.3 in \cite{Coh:lower} keeps track of the orientations of the crossings in the alternating diagram $D$ associated with a word $w$ in $T(c)$.  See also Property 7.1 in \cite{Co:3-bridge}.

\begin{lemma}
\label{lem:or1}
\cite[Lemma 3.3]{Coh:lower}
The following conventions determine the orientation of every crossing in the alternating diagram $D$ associated with a word $w$ in $T(c)$.
\begin{enumerate}
\item Two of the three strands in $D$ are oriented to the right.
\item If either a single $+$ or a single $-$ appears in a position congruent to $1$ modulo $3$ in $w$, then it corresponds to a single crossing in the alternating diagram $D$ that is horizontally-oriented.
\item If either a double $++$ or a double $-{}-$ appears in two positions congruent to $2$ and $3$ modulo $3$ in $w$, then they correspond to a single crossing in the alternating diagram $D$ that is horizontally-oriented.
\item The remaining crossings in $D$ are vertically-oriented.
\end{enumerate}
\end{lemma}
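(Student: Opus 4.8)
The plan is to work entirely inside the alternating diagram $D$, which by Table~\ref{tab:wtoD} is a $3$--strand braid word $g_1g_2\cdots g_c$ in the generators $\sigma_1$ and $\sigma_2^{-1}$, closed up with a cap on the upper left, an incoming strand on the lower left, and a cap together with an exiting strand on the right whose configuration is dictated by $g_c$. Orient the knot so the incoming strand on the lower left points to the right. At a generic vertical slice between two consecutive crossings there are three arcs, one at each braid level, and the multiset of their left/right orientations is unchanged as we pass a crossing, since a crossing merely interchanges the two arcs at two adjacent levels. At the leftmost slice the middle and top arcs are the two ends of the upper-left cap, so one of them heads into the cap and the other out of it, i.e.\ one points left and one points right, while the bottom arc is the incoming strand and points right. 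Hence exactly two of the three arcs point right at the leftmost slice, and therefore at every slice, which is part~(1).

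Because exactly one arc at each slice points left, the two arcs meeting at any crossing are parallel (both point right) precisely when neither of them is that unique leftward arc; a crossing whose two strands are parallel is horizontally oriented and one whose strands are antiparallel is vertically oriented, as one sees from the two local pictures or from which Seifert smoothing is forced. So a crossing produced by a $\sigma_1$ (levels $1,2$) is horizontally oriented iff the leftward arc just to its left is at level $3$, and a crossing produced by a $\sigma_2^{-1}$ (levels $2,3$) is horizontally oriented iff that leftward arc is at level $1$. The task is thereby reduced to tracking the level of the unique leftward arc as $D$ is read from left to right.

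The plan for that is to induct on the runs of $w$. The runs alternate in sign, starting with $+$, and by Table~\ref{tab:wtoD} a run of sign $\pm$ and length $\varepsilon\in\{1,2\}$ contributes the single crossing $\sigma_1$ in the cases $(+,1)$ and $(-,2)$ and the single crossing $\sigma_2^{-1}$ in the cases $(+,2)$ and $(-,1)$. Carry along the triple consisting of (i) the level of the leftward arc just before the current crossing, (ii) the residue mod $3$ of the position of $w$ at which the current run begins, and (iii) the sign of the current run. Passing the crossing applies the transposition underlying $g_i$ to (i) when the leftward arc is at one of the two levels $g_i$ acts on and fixes (i) otherwise, advances (ii) by $\varepsilon$, and flips (iii). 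The base case is run $1$: it is a single $+$, so $g_1=\sigma_1$, it begins at position $1$, and by the leftmost-slice analysis above the leftward arc is the top one, so the initial triple is $(3,\,1,\,+)$. Running the transitions from this triple, one checks that only a short list of triples ever occurs and that for each occurring triple and each $\varepsilon\in\{1,2\}$ the resulting crossing is horizontally oriented exactly when $\varepsilon=1$ and the run begins at a position $\equiv 1\pmod 3$, or $\varepsilon=2$ and the run begins at a position $\equiv 2\pmod 3$. Unwinding Table~\ref{tab:wtoD}, this says precisely that a single $+$ or single $-{}$ in a position $\equiv 1\pmod 3$, or a double $++$ or double $-{}-$ in positions $\equiv 2,3\pmod 3$, gives a horizontally oriented crossing and every other crossing is vertically oriented, which is parts~(2)--(4).

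The finite verification of the transition table is routine; the step that needs care is the base case, i.e.\ checking that at the leftmost slice it is the \emph{top} arc and not the middle arc that points left. This is forced, since an incorrect initial level already breaks the claim at the first crossing, but pinning it down requires tracing how the incoming strand closes up through the braid and the right-hand cap. One should also confirm along the way that the only triples arising are those consistent with $w$ genuinely describing a knot rather than a multi-component link, which is where the parity datum (iii) and the constraint that $w$ starts and ends with a single $+$ (for odd $c$) are used.
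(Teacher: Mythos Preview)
The paper does not prove this lemma at all; it is quoted verbatim from \cite[Lemma~3.3]{Coh:lower} (see also \cite[Property~7.1]{Co:3-bridge}) and used as a black box. So there is no proof here to compare against, and your write-up would have to stand on its own.

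Your outline is sound and part~(1) is correctly argued: the multiset of left/right orientations at a vertical slice is preserved by each crossing, and the leftmost slice has exactly one leftward arc because the upper-left cap forces opposite orientations on levels~2 and~3 while the incoming strand at level~1 points right. The reduction of parts~(2)--(4) to tracking the level of the unique leftward arc through the braid is also correct, and a finite-state induction on runs is the natural way to do that tracking.

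The genuine gap is the base case. You need that the leftward arc at the leftmost slice sits at level~3 rather than level~2, and your two offered justifications do not establish this. Saying ``an incorrect initial level already breaks the claim at the first crossing'' is circular, since the claim is what you are proving. Saying it ``requires tracing how the incoming strand closes up through the braid and the right-hand cap'' is correct as a description, but that trace depends on the entire word $w$, so it is not a single base-case check; it is the very global computation your induction was meant to replace. A clean fix is to argue directly in the billiard diagram, where the orientation is explicit: the curve starts at $(0,0)$ heading northeast, and for $b=\ell+1\equiv 2\pmod 3$ one computes that the curve next meets the left wall at $(0,2)$ at parameter $2b$, arriving from $(1,3)$ (i.e.\ going southwest), so the top-left arc is the leftward one. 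Then check that the passage from the billiard diagram to the alternating diagram in Table~\ref{tab:wtoD} respects this. Finally, your ``routine finite verification'' of the transition table should actually be written out; at minimum list the recurring triples and the two transitions out of each, so a reader can confirm that the horizontal crossings are exactly those in positions described by (2) and (3).
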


\section{Recursive formulas for Seifert circles}
\label{sec:recursions}

In this section, we find recursive formulas for the total number of Seifert circles in the alternating diagrams associated with words in $T(c)$ and $T_p(c)$. The section is split between the general case, where we deal with $T(c)$, and the palindromic case, where we deal with $T_p(c)$.

\subsection{General case}
\label{subsec:general}

In order to develop the recursive formulas for the total number of Seifert circles of alternating diagrams coming from $T(c)$, we partition $T(c)$ into four subsets. The final run of each of word $w$ in $T(c)$ is fixed by construction; if $c$ is odd, then $w$ ends in a single $+$, and if $c$ is even, then $w$ ends in a single $-$.  Suppose below that $c$ is odd; the even case is similar. 

The two penultimate runs in a word in $T(c)$ must be exactly one of the following cases:
\begin{itemize}
	\item[(1)] a single + followed by a single -,
	\item[(2)] a double ++ followed by a double -{}-,
	\item[(3)] a single + followed by a double -{}-, or
	\item[(4)] a double ++ followed by a single -.
\end{itemize}
These four cases form a partition of $T(c)$.

The Jacobsthal sequence \href{https://oeis.org/A001045}{A001045} \cite{OEIS1045} is an integer sequence satisfying the recurrence relation $J(n) = J(n-1) + 2J(n-2)$ with initial values $J(0)=0$ and $J(1)=1$. The closed formula for the $n$th Jacobsthal number  is $J(n)=\frac{2^n - (-1)^n}{3}$. We use the Jacobsthal sequence to find a formula for the number of words in $T(c)$.
\begin{proposition}
\label{prop:countterms}
The number $t(c) = \frac{2^{c-2} - (-1)^c}{3}$ is the Jacobsthal number $J(c-2)$ and satisfies the recursive formula $t(c)=t(c-1)+2t(c-2)$.
\end{proposition}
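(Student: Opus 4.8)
The plan is to treat $t(c)$ as the number $|T(c)|$ of words in $T(c)$, count it by a direct combinatorial argument, extract the recursion from that count, and then recognize the answer as a Jacobsthal number. First I would reformulate the count. A word $w\in T(c)$ is determined by its tuple of run lengths $(\varepsilon_1,\dots,\varepsilon_c)$: the signs alternate, $\varepsilon_1=\varepsilon_c=1$ are forced, and $\varepsilon_2,\dots,\varepsilon_{c-1}$ range over $\{1,2\}$ subject only to $\ell=\sum_{i=1}^c\varepsilon_i\equiv 1\pmod 3$. Since $\varepsilon_1+\varepsilon_c=2$, that congruence is equivalent to $\sum_{i=2}^{c-1}\varepsilon_i\equiv 2\pmod 3$; so, letting $a_m$ be the number of strings in $\{1,2\}^m$ whose entries sum to $2$ modulo $3$, we have $t(c)=a_{c-2}$ for $c\ge 3$.

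Next I would find a recursion for $a_m$ by conditioning a string counted by $a_m$ on its last entry. If the last entry is $1$, its length-$(m-1)$ prefix has entry-sum $\equiv 1\pmod 3$; if the last entry is $2$, the prefix has entry-sum $\equiv 0\pmod 3$. Together with the $a_{m-1}$ strings of length $m-1$ with entry-sum $\equiv 2\pmod 3$, these prefixes enumerate each of the $2^{m-1}$ strings in $\{1,2\}^{m-1}$ exactly once, so $a_m+a_{m-1}=2^{m-1}$ for $m\ge 1$. Combining this identity for $m$ with twice the identity for $m-1$ yields $a_m=a_{m-1}+2a_{m-2}$ for $m\ge 2$; equivalently $t(c)=t(c-1)+2t(c-2)$ for $c\ge 5$, with the boundary values $a_0=0$ and $a_1=1$ (giving $t(3)=t(4)=1$) checked by hand.

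Since $a_0=0=J(0)$, $a_1=1=J(1)$, and $a_m$ obeys the Jacobsthal recursion, induction gives $a_m=J(m)$ for all $m\ge 0$, hence $t(c)=J(c-2)$. Substituting the stated closed form $J(n)=\tfrac{2^n-(-1)^n}{3}$ and using $(-1)^{c-2}=(-1)^c$ gives $t(c)=\tfrac{2^{c-2}-(-1)^c}{3}$, completing the proof; one can also verify the recursion directly from this closed form in one line, since $t(c-1)+2t(c-2)=\tfrac13\big(2^{c-3}+2^{c-3}-(-1)^{c-1}-2(-1)^{c-2}\big)=\tfrac13\big(2^{c-2}-(-1)^c\big)$.

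The statement is elementary bookkeeping, so I do not expect a serious obstacle. The one point needing care is the congruence reformulation: one is tempted to relate $T(c)$ to $T(c-1)$ and $T(c-2)$ by deleting runs from the end of $w$, but deleting runs changes the word length by $1$ or $2$, not by a multiple of $3$, so it does not respect the defining congruence; passing to the digit-sum count, where the last-entry (complementation) argument is clean, is what makes it go through. One should also mind the small-$c$ boundary, where the second-order recursion only literally makes sense for $c\ge 5$. (A roots-of-unity filter applied to $t(c)=\sum_{k\equiv 1-c\,(3)}\binom{c-2}{k}$ gives the closed form directly as an alternative route.)
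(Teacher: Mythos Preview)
Your proof is correct and takes a genuinely different route from the paper. You abstract away the billiard-table structure entirely, reducing $t(c)$ to the count $a_{c-2}$ of strings in $\{1,2\}^{c-2}$ with digit sum $\equiv 2\pmod 3$, and then extract the recursion algebraically from the complementation identity $a_m+a_{m-1}=2^{m-1}$. The paper instead proves the recursion bijectively: it partitions $T(c)$ by the lengths of the $(c-2)$nd and $(c-1)$st runs into four cases, then exhibits explicit replacement moves (deleting a length-$3$ block, or swapping $+-+\leftrightarrow ++-$ and $++-{}-+\leftrightarrow +-$) that send two of the cases bijectively onto $T(c-2)$ and the other two onto a partition of $T(c-1)$.

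Your argument is shorter and cleaner for the bare count. The paper's approach, however, is not gratuitous: the very bijections built here are reused verbatim in the proof of the Seifert-circle recursion $s(c)=s(c-1)+2s(c-2)+3t(c-2)$, where one must track how many Seifert circles each replacement creates or destroys. Your remark that ``deleting runs changes the word length by $1$ or $2$, not by a multiple of $3$, so it does not respect the defining congruence'' is a bit hasty---the paper makes exactly this kind of deletion work by choosing subwords of total length divisible by $3$, and this care pays off downstream.
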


\begin{proof}
The base cases of $t(3)=t(4)=1$ hold because $T(3) =\{+-{}-+\}$ and $T(4) = \{+-+-\}$.

Next, we show that $t(c)$ satisfies the recursive formula above.  
The penultimate two runs in cases 3 and 4 are of length three, which is convenient for our model, and so they can be removed without changing the length requirement modulo 3. Removing either $+-{}-$ or $++-$ also does not affect the parity of the number of crossings. The final $+$ after these subwords can still be appended to the shorter words after the removal. What is left after removal in each of these cases is the set $T(c-2)$, and so cases 3 and 4 combine to contribute $2t(c-2)$ words.

In case 1, the final three runs $+-+$ can be replaced by $++-$, preserving the length of the word and reducing the number of crossings by one. In case 2, the final three runs $++-{}-+$ can be replaced by $+-$ without changing the length requirement modulo 3. In this case, the number of crossings is reduced by one. These two cases partition $T(c-1)$.  In case 1, the penultimate run is a double, and in case 2, it is a single.  Thus these two cases together contribute $t(c-1)$ words. 

Therefore $t(c) = t(c-1) + 2t(c-2)$. Since $t$ satisfies the Jacobsthal recurrence relation and $t(3)=t(4)=J(1)=J(2)=1$, it follows that $t(c) = J(c-2)=  \frac{2^{c-2} - (-1)^c}{3}$.
\end{proof}

The replacements in the proof of Proposition \ref{prop:countterms} can be summarized as follows.
\begin{itemize}
	\item[(1)] The final string $+-+$ is replaced by $++-$, obtaining a new word with $c-1$ crossings.
	\item[(2)] The final string $++-{}-+$ is replaced by $+-$, obtaining a new word with $c-1$ crossings.
	\item[(3)] The final string $+-{}-+$ is replaced by $+$, obtaining a new word with $c-2$ crossings.
	\item[(4)] The final string $++-+$ is replaced by $+$, obtaining a new word with $c-2$ crossings.
\end{itemize}

\begin{example}

\label{ex:c6countterms}
Table \ref{tab:c456} shows the sets $T(4)$, $T(5)$, and $T(6)$. Subwords of words in $T(6)$ in parentheses are replaced according to the proof of Proposition \ref{prop:countterms} to obtain the words on the left in either $T(4)$ or $T(5)$. We see that $t(6) = t(5) + 2t(4)$.
\end{example}

\begin{center}
\begin{table}[h]
\begin{tabular}{|c|c||c|c|}
\hline
$T(4)$ & $+-+()-$ 				& $+-+(-++)-$ 				& \\
\cline{1-2}
$T(4)$ & $+-+()-$ 				& $+-+(-{}-+)-$ 			& \\
\cline{1-2}
\multirow{3}{*}{$T(5)$}			& $+-{}-++(-)+$ 	& $+-{}-++(-{}-++)-$ 	& $T(6)$\\
														& $+-++(-{}-)+$ 	& $+-++(-+)-$ 				& \\
														& $+-{}-+(-{}-)+$ & $+-{}-+(-+)-$ 		& \\
\hline
\end{tabular}
\caption{The sets $T(4)$, $T(5)$, and $T(6)$ with the subwords in the parentheses replaced as in the proof of Proposition \ref{prop:countterms}.}
\label{tab:c456}
\end{table}
\end{center}

\begin{example}
\label{ex:c7countterms}
Table \ref{tab:c567} shows the sets $T(5)$, $T(6)$, and $T(7)$. Subwords of words in $T(7)$ in parentheses are replaced according to the proof of Proposition \ref{prop:countterms} to obtain the words on the left in either $T(5)$ or $T(6)$. We see that $t(7) = t(6) + 2t(5)$.
\end{example}

\begin{center}
\begin{table}[h]
\begin{tabular}{|c|c||c|c|}
\hline
			& $+-{}-++-()+$ 			& $+-{}-++-(+--)+$ 	& \\
$T(5)$ & $+-++-{}-()+$ 			& $+-++-{}-(+--)+$ 				& \\
			& $+-{}-+-{}-()+$ 		& $+-{}-+-{}-(+--)+$ 		& \\
\cline{1-2}
			& $+-{}-++-()+$ 			& $+-{}-++-(++-)+$ 	& \\
$T(5)$ & $+-++-{}-()+$ 			& $+-++-{}-(++-)+$ 				& \\
			& $+-{}-+-{}-()+$ 		& $+-{}-+-{}-(++-)+$ 		& $T(7)$ \\
\cline{1-2}
			& $+-+-{}-(+)-$ 			& $+-+-{}-(++--)+$ 			& \\
			& $+-++-(+)-$ 				& $+-++-(++--)+$ 				& \\
$T(6)$	& $+-{}-+-(+)-$ 			& $+-{}-+-(++--)+$ 			& \\
			& $+-+-(++)-$ 				& $+-+-(+-)+$ 				& \\ 
			& $+-{}-++-{}-(++)-$ 	& $+-{}-++-{}-(+-)+$ 	& \\
\hline
\end{tabular}
\caption{The sets $T(5)$, $T(6)$, and $T(7)$ with the subwords in the parentheses replaced as in the proof of Proposition \ref{prop:countterms}.}
\label{tab:c567}
\end{table}
\end{center}

Let $s(c)$ be the total number of Seifert circles obtained when Seifert's algorithm is applied to the alternating diagrams associated to words in $T(c)$. For brevity, we say that $s(c)$ is the total number of Seifert circles from $T(c)$. In order to find a recursive formula for $s(c)$, we develop recursive formulas for sizes of the subsets in the partition of $T(c)$ defined by the four cases above.

\begin{lemma}
\label{lem:countcases}
Let $t_1(c)$, $t_2(c)$, $t_3(c)$, and $t_4(c)$ be the number of words in cases 1, 2, 3, and 4, respectively, for crossing number $c$. Then 
\[t_1(c)=2t(c-3),~t_2(c)=t(c-2),~\text{and}~t_3(c)=t_4(c)=t(c-2).\]
\end{lemma}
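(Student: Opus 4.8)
The plan is to count each of the four subsets of $T(c)$ directly, using the length-modulo-3 constraint together with the bijective correspondences already established in the proof of Proposition \ref{prop:countterms}. Recall that every word $w\in T(c)$ has $\varepsilon_1=\varepsilon_c=1$ and $\ell=\sum\varepsilon_i\equiv 1\pmod 3$; the four cases are determined by the two penultimate runs. The key observation is that removing or replacing those penultimate runs changes the total length $\ell$ in a controlled way, so the surviving initial segment lands in exactly one of the sets $T(c-3)$, $T(c-2)$, or $T(c-1)$ (or a residue-shifted analogue), and we can read off the count.

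First I would handle cases 3 and 4. In case 3 the final runs are $(+)^1(-)^2(+)^1$, contributing a length of $4$ to $\ell$; deleting the subword $(-)^2(+)^1$ (equivalently $(+)^1(-)^2(+)^1 \mapsto (+)^1$) removes length $3$, hence preserves $\ell \bmod 3$ and preserves the parity of the crossing count, so the resulting word ranges exactly over $T(c-2)$; this gives $t_3(c)=t(c-2)$. The same argument with $(+)^2(-)^1(+)^1 \mapsto (+)^1$ (removing length $3$) gives $t_4(c)=t(c-2)$. This is essentially the observation already made for cases 3 and 4 in the proof of Proposition \ref{prop:countterms}, restated as an equality of cardinalities.

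Next I would handle cases 1 and 2 separately, rather than lumping them together as in Proposition \ref{prop:countterms}. For case 2 the final runs $(+)^2(-)^2(+)^1$ contribute length $5$; the replacement $(+)^2(-)^2(+)^1\mapsto(+)^1$ removes length $4\equiv 1\pmod 3$, and the remaining word must have length $\equiv 0\pmod 3$ — but replacing instead by a single run as in Proposition \ref{prop:countterms}'s case 2, namely $(+)^2(-)^2(+)^1\mapsto (+)^1(-)^1$, removes length $3$ and reduces the crossing count by one, so the result ranges over $T(c-1)$; but not all of $T(c-1)$, only those words whose final run before the terminal $(-)^1$ is itself a single — wait, I must be careful, so instead I would argue directly: a word in case 2 is determined by its initial segment $w'$ obtained by deleting the last three runs $(+)^2(-)^2(+)^1$, where $w'$ ends in a run of $+$'s and has $c-3$ runs; the constraint is that the total length $\ell(w') + 5 \equiv 1\pmod 3$, i.e. $\ell(w')\equiv 2\pmod 3$, together with $\varepsilon_1=1$ and all $\varepsilon_i\in\{1,2\}$. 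Appending $(-)^2(+)^1$ to such a $w'$ (length $+3$) gives a word of length $\equiv 2\pmod 3$ ending in $(+)^1$ with $c-1$ runs — and this is a bijection onto all of $T(c-1)$? No: $T(c-1)$ requires length $\equiv 1\pmod 3$. The clean route is: deleting $(-)^2(+)^1$ from a case-2 word removes length $3$, yielding a word with $c-2$ runs, ending in a double $(+)^2$, of length $\equiv 1\pmod 3$ — this is not a valid $T(c-2)$ word since $\varepsilon_{c-2}=2$. So the correct bijection is: replace the last three runs $(+)^2(-)^2(+)^1$ by $(-)^2(+)^1$ reinterpreted, or more simply map $w\in$ case 2 to the word obtained by deleting $(+)^2(-)^2$, i.e. keeping only $w'(+)^1$ where $w'$ drops those two runs; this removes length $4$, so $\ell\mapsto\ell-4$, and $\ell-4\equiv 0 \pmod 3$, landing in a residue-$0$ set in bijection with $T(c-2)$ by a relabeling argument. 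I would instead simply count case-2 words as $t(c-2)$ by exhibiting the bijection $w \leftrightarrow w''$ where $w''$ is $w$ with $(+)^2(-)^2$ (the two penultimate runs) replaced by the single pair $(+)^1(-)^1$, which removes length $2$, hence changes $\ell \bmod 3$, so this does not land in $T$ either — and here is the genuine subtlety.

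\textbf{The main obstacle} is precisely this bookkeeping in cases 1 and 2: the replacements used in Proposition \ref{prop:countterms} were chosen to make the combined count $t(c-1)$ work out, but to isolate $t_1(c)$ and $t_2(c)$ individually I need replacements that each land in a clean copy of $T(c-2)$ (for case 2) and $T(c-3)$ (for case 1, with a factor of $2$). I expect the right statements are: a case-1 word $(+)^1(-)^1(+)^1$ at the end has its initial segment (drop the last two runs $(-)^1(+)^1$, length $-2$) — no; rather, drop the last \emph{three} runs and record that the run before them is free to be a single or double, giving the factor $2$ and landing in $T(c-3)$ after adjusting length by $2$ via run-doubling. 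So the honest plan is: (i) for $t_2(c)$, show the map deleting the final runs $(-)^2(+)^1$ and then \emph{shortening the new last run} (which was forced to be a double $(+)^2$, now made a single $(+)^1$) is a bijection onto $T(c-2)$ — net length change $-3-1=-4\equiv -1$, so I instead lengthen elsewhere; (ii) for $t_1(c)$, similar with an extra binary choice producing $2t(c-3)$. Given the delicacy, I would in the write-up lean on \emph{explicitly constructed bijections with stated inverses} and verify the length-mod-$3$ and parity conditions on both sides, checking the small cases $c=5,6,7$ against Tables \ref{tab:c456} and \ref{tab:c567} to be certain the indexing is right; the even-$c$ case is symmetric under the $\overline{r}$ convention and needs only a remark. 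Once the four counts are in hand, the identity $t_1(c)+t_2(c)+t_3(c)+t_4(c) = 2t(c-3)+3t(c-2)$ should be checked for consistency against $t(c)=t(c-1)+2t(c-2)=3t(c-2)+2t(c-3)$ (using $t(c-1)=t(c-2)+2t(c-3)$), which it matches — a useful sanity check that the partition counts are correct.
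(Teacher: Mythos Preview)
Your treatment of cases 3 and 4 is correct and matches the paper. The gap is in cases 1 and 2, and it is exactly the ``genuine subtlety'' you flag but never resolve: no bijection of the form ``delete or replace the last few runs and keep the initial segment unchanged'' can work, because the length residues do not line up. Concretely, a case-2 word is determined by $(\varepsilon_2,\ldots,\varepsilon_{c-3})\in\{1,2\}^{c-4}$ with $\sum\varepsilon_i\equiv 1\pmod 3$, while a word in $T(c-2)$ is determined by the same tuple with $\sum\varepsilon_i\equiv 2\pmod 3$; these are different residue classes, so the identity on initial segments is not a bijection. Your attempted fix (``shorten the new last run, net $-4$, so lengthen elsewhere'') does not specify where to lengthen, and any such choice would have to be made in a way that is reversible --- you never supply this. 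The sanity check $t_1+t_2+t_3+t_4=2t(c-3)+3t(c-2)=t(c)$ is reassuring but does not pin down $t_1$ and $t_2$ individually.

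The paper sidesteps the residue mismatch entirely. Instead of seeking a direct bijection, it subdivides each of cases 1 and 2 according to whether the $(c{-}3)$rd run is a single or a double. This yields the coupled system
\[
t_1(c)=t(c-3)+t_2(c-1),\qquad t_2(c)=t(c-3)+t_1(c-1),
\]
and then an easy simultaneous induction (using $t(c-2)=t(c-3)+2t(c-4)$) gives $t_1(c)=2t(c-3)$ and $t_2(c)=t(c-2)$. If you want to salvage the direct-bijection route, the missing ingredient is the involution $\varepsilon_i\mapsto 3-\varepsilon_i$ on $\{1,2\}^n$, which sends the sum to its negative mod $3$ and hence shows that the number of tuples with $\sum\equiv 1$ equals the number with $\sum\equiv 2$; with that observation in hand, both $t_2(c)=t(c-2)$ and $t_1(c)=2t(c-3)$ follow immediately from the residue computations above. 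Either approach works, but what you have written stops short of either.
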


\begin{proof}
The last result $t_3(c)=t_4(c)=t(c-2)$ appears in the proof of Proposition \ref{prop:countterms} above.  We now consider the other cases.

Without loss of generality, suppose $c$ is odd.  In case 2, the final three runs are $++-{}-+$, and we can obtain a word with crossing number $c-1$ by replacing this string with $+-$, as described in Proposition \ref{prop:countterms} above. If the $(c-3)$rd run is a double $-{}-$, then the string $-{}-++-{}-$ in positions $c-3$ through $c-1$ can be removed without affecting the required length modulo 3, with the final single $+$ becoming a final single $-$.  The number of such words is $t(c-3)$. If the $(c-3)$rd run is a single $-$, then $-++-{}-+$ is replaced with the string $-+-$.  This is case 1 for $c-1$ crossings, and so the number of these words is $t_1(c-1)$. Therefore $t_2(c) = t(c-3)+t_1(c-1)$.

In case 1, the final three runs are $+-+$ and we can reduce this to a word with crossing number $c-1$ by replacing this string with $++-$, as described in Proposition \ref{prop:countterms} above. If the $(c-3)$rd run is a single $-$, then first perform the replacement move, yielding the string $-++-$, and then remove the penultimate two runs without affecting the required length modulo 3, keeping the final single $-$.  The number of these words is $t(c-3)$. If the $(c-3)$rd run is a double $-{}-$, then after performing the replacement move, the final three runs are $-{}-++-$.  This is case 2 for $c-1$ crossings, and so the number of these words is $t_2(c-1)$. Therefore $t_1(c)=t(c-3)+t_2(c-1)$.

We prove that $t_1(c)=2t(c-3)$ and that $t_2(c)=t(c-2)$ by induction. For the base cases, Example \ref{ex:c6countterms} implies that $t_2(5)=1$ and $t_1(6)=2$, and $t(3)=1$ because $T(3)=\{+--+\}$.  Our inductive hypothesis is that $t_1(c-1)=2t(c-4)$ and $t_2(c-1)=t(c-3)$. We then have that 
\[t_1(c) = t(c-3) + t_2(c-1) = 2t(c-3)\]
and
\[t_2(c)=t(c-3)+t_1(c-1) = t(c-3) + 2t(c-4) = t(c-2).\]
\end{proof}

We are now ready to prove our recursive formula for $s(c)$, the total number of Seifert circles from $T(c)$. Throughout the proof, we refer to Table \ref{tab:Seifert} below.

\begin{table}[h]
\begin{tabular}{|c|c||c|c|c|}
\hline
Case & Crossing & String & Alternating  & Seifert State \\
 & Number & & Diagram& \\
\hline
\hline
1 & $c$ & $+-+$ & 
\begin{tikzpicture}[scale=.5, rounded corners = 1mm]
\draw[white] (-.2,-.2) rectangle (3.2,2.2);
\draw (0,0) -- (1.3, 1.3);
\draw (0,1) -- (.3,.7);
\draw (.7,.3) -- (1,0) -- (2,0) -- (3,1) -- (2,2) -- (1.7,1.7);
\draw (0,2) -- (1,2) -- (2.3,.7);
\draw (2.7,.3) -- (3,0);

\draw[->] (.5, .5) -- (.1,.1);
\draw[->] (.7,.3) -- (.9,.1);
\draw[->] (2.5, .5) -- (2.9,.9);
\draw[->] (2.7,.3) -- (2.9,.1);
\draw[->] (1.5, 1.5) -- (1.9,1.1);
\draw[->] (1.3,1.3) -- (1.1,1.1);

\end{tikzpicture}
&
\begin{tikzpicture}[scale=.5, rounded corners = 1mm]
\draw[white] (-.2,-.2) rectangle (3.2,2.2);
\draw[->] (0,1) -- (.4,.5) -- (0,0);
\draw[->] (0,2) -- (1,2) -- (1.4,1.5) -- (.6,.5) -- (1,0) -- (2,0) -- (2.5,.4) -- (3,0);
\draw[->] (2,1) -- (2.5,.6) -- (3,1) -- (2,2) -- (1.6,1.5) -- (2,1);

\end{tikzpicture}
\\
\hline
1 & $c-1$ & $++-$ & 
\begin{tikzpicture}[scale = .5, rounded corners = 1mm]
\draw[white] (-.2,-.2) rectangle (2.2,2.2);
\draw (0,0) -- (1,0) -- (2,1) -- (1.7,1.3);
\draw (1.3,1.7) -- (1,2) -- (0,1);
\draw (0,2) -- (0.3,1.7);
\draw (.7,1.3) -- (1,1) -- (2,2);
\draw[->] (0.5,1.5) -- (.9,1.9);
\draw[->] (.7,1.3) -- (.9,1.1);
\draw[->] (1.5,1.5) -- (1.9,1.9);
\draw[->] (1.7, 1.3) -- (1.9,1.1);
\end{tikzpicture}
&
\begin{tikzpicture}[scale = .5, rounded corners = 1mm]
\draw[white] (-.2,-.2) rectangle (2.2,2.2);
\draw[->] (0,2) -- (.5,1.6) -- (1,2) -- (1.5,1.6) -- (2,2);
\draw[->] (0,1) -- (.5, 1.4) -- (1,1) -- (1.5,1.4) -- (2,1) -- (1,0) -- (0,0);
\end{tikzpicture}
 \\
 \hline\hline
 2A & $c$ & $-++-{}-+$ & 
\begin{tikzpicture}[scale = .5, rounded corners = 1mm]
\draw[white] (-1.2,-.2) rectangle (3.2,2.2);
\draw (-1,0) -- (1,0) -- (2,1) -- (2.3,.7);
\draw (2.7,.3) -- (3,0);
\draw (-1,2) -- (0,1) -- (.3,1.3);
\draw (-.3,1.7) -- (0,2) -- (1.3,.7);
\draw (-1,1) -- (-.7,1.3);
\draw (1.7,.3) -- (2,0) -- (3,1) -- (2,2) -- (1,2) -- (.7,1.7);
\draw[->] (.3,1.3) -- (.1,1.1);
\draw[->] (.5,1.5) -- (.9,1.1);
\draw[->] (1.5,.5) -- (1.9,.9);
\draw[->] (1.7,.3) -- (1.9,.1);
\draw[->] (2.5,.5) -- (2.9,.9);
\draw[->] (2.7,.3) -- (2.9,.1);
\draw[->] (-.5,1.5) -- (-.9,1.9);
\draw[->] (-.3,1.7) -- (-.1,1.9);
\end{tikzpicture}
&
 \begin{tikzpicture}[scale = .5, rounded corners = 1mm]
 \draw[white] (-1.2,-.2) rectangle (3.2,2.2);
 \draw[->] (0,2) arc (90:-270:.4cm and .5cm);
 \draw[->] (-1,0) -- (1,0) -- (1.5,.4) -- (2,0) -- (2.5,.4) -- (3,0);
 \draw[->] (1.5,2) -- (1,2) -- (.6,1.5) -- (1,1) -- (1.5,.6) -- (2,1) -- (2.5,.6) -- (3,1) -- (2,2) -- (1.5,2);
 \draw[->] (-1,1) -- (-.6,1.5) -- (-1,2);
 \end{tikzpicture}
\\
\hline
2A & $c-1$ & $-+-$ & 
\begin{tikzpicture}
[scale = .4, rounded corners = 1mm]
\draw[white] (-1.2,-.2) rectangle (2.2,2.2);
\draw (-1,0) -- (0,0) -- (1.3,1.3);
\draw (1.7,1.7)--(2,2);
\draw (-1,2) -- (0.3,0.7);
\draw (0.7,0.3) -- (1,0) -- (2,1) -- (1,2) -- (0,2) -- (-.3,1.7);
\draw (-1,1) -- (-.7,1.3);

\draw[->] (-.3,1.7) -- (-.1,1.9);
\draw[->] (-.5,1.5) -- (-.9,1.9);
\draw[->] (0.5,0.5) -- (0.9, 0.9);
\draw[->] (0.3,0.7) -- (0.1,0.9);
\draw[->] (1.5,1.5) -- (1.9,1.1);
\draw[->] (1.7, 1.7) -- (1.9, 1.9);

\end{tikzpicture}

&
\begin{tikzpicture}
[scale = .4, rounded corners = 1mm]
\draw[white] (-1.2,-.2) rectangle (2.2,2.2);
\draw[->] (-1,0) --(0,0) -- (.4,.5) -- (0,1) -- (-.4,1.5) -- (0,2)-- (1,2) --(1.5,1.6) -- (2,2);
\draw[->] (1,1) -- (1.5,1.4) -- (2,1) -- (1,0) -- (0.6,0.5) -- (1,1);
\draw[->] (-1,1) -- (-.6,1.5) -- (-1,2);
\end{tikzpicture}
 \\
\hline
\hline
2B & $c$ & $-{}-++-{}-+$ &
\begin{tikzpicture}[scale = .5, rounded corners = 1mm]
\draw[white] (-1.2,-.2) rectangle (3.2,2.2);
\draw (-.3,.3) -- (0,0) -- (1,0) -- (2,1) -- (2.3,.7);
\draw (2.7,.3) -- (3,0);
\draw (-1,0) -- (.3,1.3);
\draw (-1,2) -- (0,2) -- (1.3,.7);
\draw (1.7,.3) -- (2,0) -- (3,1) -- (2,2) -- (1,2) -- (.7,1.7);
\draw (-1,1) -- (-.7,.7);
\draw[->] (.3,1.3) -- (.1,1.1);
\draw[->] (.5,1.5) -- (.9,1.1);
\draw[->] (1.5,.5) -- (1.9,.9);
\draw[->] (1.7,.3) -- (1.9,.1);
\draw[->] (2.5,.5) -- (2.9,.9);
\draw[->] (2.7,.3) -- (2.9,.1);
\draw[->] (-.5,.5) -- (-1,0);
\draw[->] (-.3,.3) -- (-.1,.1);
\end{tikzpicture}
 & 
 \begin{tikzpicture}[scale = .5, rounded corners = 1mm]
 \draw[white] (-1.2,-.2) rectangle (3.2,2.2);
 \draw[->] (-1,2) -- (0,2) -- (.4,1.5) -- (0,1) -- (-.4,.5) -- (0,0) -- (1,0) -- (1.5,.4) -- (2,0) -- (2.5,.4) -- (3,0);
 \draw[->] (1.5,2) -- (1,2) -- (.6,1.5) -- (1,1) -- (1.5,.6) -- (2,1) -- (2.5,.6) -- (3,1) -- (2,2) -- (1.5,2);
 \draw[->] (-1,1) -- (-.6,.5) -- (-1,0);
 \end{tikzpicture}
 \\
\hline
2B & $c-1$ & $-{}-+-$ &
\begin{tikzpicture}
[scale = .4, rounded corners = 1mm]
\draw[white] (-1.2,-.2) rectangle (2.2,2.2);
\draw (-.3,.3) -- (0,0) -- (1.3,1.3);
\draw (1.7,1.7)--(2,2);
\draw (-1,0) -- (0,1) -- (0.3,0.7);
\draw (-1,1) -- (-.7,.7);
\draw (0.7,0.3) -- (1,0) -- (2,1) -- (1,2) -- (0,2) -- (-1,2);

\draw[->] (0.5,0.5) -- (0.9, 0.9);
\draw[->] (0.3,0.7) -- (0.1,0.9);
\draw[->] (1.5,1.5) -- (1.9,1.1);
\draw[->] (1.7, 1.7) -- (1.9, 1.9);
\draw[->] (-.5,.5) -- (-.9,.1);
\draw[->] (-.3,.3) -- (-.1,.1);

\end{tikzpicture}
 & 
\begin{tikzpicture}
[scale = .4, rounded corners = 1mm]
\draw[white] (-1.2,-.2) rectangle (2.2,2.2);
\draw[->] (0,1) arc (90:450:.4cm and .5cm);
\draw[->] (-1,1) -- (-.6,.5) -- (-1,0);
\draw[->] (-1,2) -- (1,2) --(1.5,1.6) -- (2,2);
\draw[->] (1,1) -- (1.5,1.4) -- (2,1) -- (1,0) -- (0.6,0.5) -- (1,1);
\end{tikzpicture}
\\
\hline
\hline

3 & $c$ & $+-{}-+$ & 
\begin{tikzpicture}[scale = .5, rounded corners = 1mm]
\draw[white] (-.2,-.2) rectangle (3.2,2.2);
\draw (0,0) -- (1,1) -- (1.3,.7);
\draw (0,1) -- (0.3,0.7);
\draw (0.7,0.3) -- (1,0) -- (2,1) -- (2.3,0.7);
\draw (1.7,0.3) -- (2,0) -- (3,1) -- (2,2) -- (0,2);
\draw (2.7,0.3) -- (3,0);

\draw[->] (0.5, 0.5) -- (0.9, 0.9);
\draw[->] (0.7,0.3) -- (0.9,0.1);
\draw[->] (1.5, 0.5) -- (1.9,0.9);
\draw[->] (1.7,0.3) -- (1.9, 0.1);
\draw[->] (2.5,0.5) -- (2.9,0.9);
\draw[->] (2.7,0.3) -- (2.9, 0.1);
\end{tikzpicture}

 & 
\begin{tikzpicture}[scale = .5, rounded corners = 1mm]
\draw[white] (-.2,-.2) rectangle (3.2,2.2);
\draw[->] (0,0) -- (.5,.4) -- (1,0) -- (1.5,.4) -- (2,0) -- (2.5,.4) -- (3,0);
\draw[->] (0,1) -- (.5,.6) -- (1,1) -- (1.5,.6) -- (2,1) -- (2.5,.6) -- (3,1) -- (2,2) -- (0,2);

\end{tikzpicture}
 \\
\hline
3 & $c-2$ & $+$ & 
\begin{tikzpicture}[scale = .5, rounded corners = 1mm]
\draw[white] (-.2,-.2) rectangle (1.2,2.2);
\draw (0,1) -- (.3,.7);
\draw (.7,.3) -- (1,0);
\draw (0,0) -- (1,1) -- (0,2);

\draw[->] (.5,.5) -- (.9,.9);
\draw[->] (.7,.3) -- (.9,.1);

\end{tikzpicture}
 &  
\begin{tikzpicture}[scale = .5, rounded corners = 1mm]
\draw[white] (-.2,-.2) rectangle (1.2,2.2);
\draw[->] (0,0) -- (.5,.4) -- (1,0);
\draw[->] (0,1) -- (.5,.6) -- (1,1) -- (0,2);

\end{tikzpicture}
 \\
\hline
\hline
4 & $c$ & $++-+$ & 
\begin{tikzpicture}[scale = .5, rounded corners = 1mm]
\draw[white] (-.2,-.2) rectangle (3.2,2.2);
\draw (0,0) -- (2,0) -- (3,1) -- (2,2) -- (1.7,1.7);
\draw (1.3,1.3) -- (1,1) -- (0,2);
\draw (0,1) -- (.3,1.3);
\draw (.7,1.7) -- (1,2) -- (2.3,.7);
\draw (2.7,0.3) -- (3,0);

\draw[->] (0.5, 1.5) -- (0.1, 1.9);
\draw[->] (0.7,1.7) -- (0.9,1.9);
\draw[->] (1.5, 1.5) -- (1.9,1.1);
\draw[->] (1.3,1.3) -- (1.1, 1.1);
\draw[->] (2.5,0.5) -- (2.9,0.9);
\draw[->] (2.7,0.3) -- (2.9, 0.1);
\end{tikzpicture}
& 
\begin{tikzpicture}[scale = .5, rounded corners = 1mm]
\draw[white] (-.2,-.2) rectangle (3.2,2.2);
\draw[->] (0,0) -- (2,0) -- (2.5,.4) -- (3,0);
\draw[->] (0,1) -- (.4,1.5) -- (0,2);
\draw[->] (1,2) arc (90:-270:.4 cm and .5cm);
\draw[->] (2,1) -- (2.5,.6) -- (3,1) -- (2,2) -- (1.6,1.5) -- (2,1);

\end{tikzpicture}
\\
\hline
4 & $c-2$ & $+$ & 
\begin{tikzpicture}[scale = .5, rounded corners = 1mm]
\draw[white] (-.2,-.2) rectangle (1.2,2.2);
\draw (0,1) -- (.3,.7);
\draw (.7,.3) -- (1,0);
\draw (0,0) -- (1,1) -- (0,2);

\draw[->] (.5,.5) -- (.9,.9);
\draw[->] (.7,.3) -- (.9,.1);

\end{tikzpicture}
 &  
\begin{tikzpicture}[scale = .5, rounded corners = 1mm]
\draw[white] (-.2,-.2) rectangle (1.2,2.2);
\draw[->] (0,0) -- (.5,.4) -- (1,0);
\draw[->] (0,1) -- (.5,.6) -- (1,1) -- (0,2);

\end{tikzpicture}
 \\
 \hline
\end{tabular}
\caption{Alternating diagrams and Seifert states corresponding to the cases in the proof of Theorem \ref{thm:Seifertrecursion}.}
\label{tab:Seifert}
\end{table}

\begin{theorem}
\label{thm:Seifertrecursion}
Let $s(c)$ be the total number of Seifert circles obtained when Seifert's algorithm is applied to the alternating $2$-bridge diagrams associated with words in $T(c)$. Then $s(c)$ satisfies the recursion $s(c)= s(c-1) + 2s(c-2) + 3t(c-2)$.
\end{theorem}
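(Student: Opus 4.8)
The plan is to track, for each of the four cases in the partition of $T(c)$, how the reduction moves of Proposition~\ref{prop:countterms} change the number of Seifert circles in the associated alternating diagram, and then assemble the contributions. Concretely, for a word $w\in T(c)$ let $\mathrm{sc}(w)$ denote the number of Seifert circles in the Seifert state of the alternating diagram $D$ associated with $w$, so that $s(c)=\sum_{w\in T(c)}\mathrm{sc}(w)$. The reduction moves are local: they alter only the last few runs of $w$, hence only a bounded portion of $D$ near its right end, while Lemma~\ref{lem:or1} pins down the orientations of all crossings in $D$, and in particular the orientations of the crossings that survive the move and of the strands entering the modified region. So the comparison of Seifert states before and after a reduction move reduces to a finite, explicitly drawable computation, which is exactly what Table~\ref{tab:Seifert} records.

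The key steps, in order, are as follows. First, I would recall from Proposition~\ref{prop:countterms} and Lemma~\ref{lem:countcases} that cases~3 and~4 together contribute $2t(c-2)$ words and biject (via deleting the last two runs) with $T(c-2)$, while cases~1 and~2 together biject with $T(c-1)$ via the replacement moves $+-+\mapsto ++-$ and $++-{}-+\mapsto +-$. Second, for cases~3 and~4 I would show, using the Seifert-state pictures in the corresponding rows of Table~\ref{tab:Seifert}, that deleting the final string $+-{}-+$ (resp.\ $++-+$) and replacing it by the single final $+$ changes the number of Seifert circles by a fixed constant $\delta_3$ (resp.\ $\delta_4$) independent of the rest of the word; summing over the $t(c-2)$ words in each case gives a contribution of the form $s(c-2)+\delta_3 t(c-2)$ from case~3 and $s(c-2)+\delta_4 t(c-2)$ from case~4. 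Third, for cases~1 and~2 I would do the analogous local computation for the replacement moves $+-+\mapsto++-$ and $++-{}-+\mapsto+-$ — here Table~\ref{tab:Seifert} subdivides case~2 into subcases 2A and 2B according to whether the preceding run is a single $-$ or a double $-{}-$, to keep the affected region genuinely local and its orientations determined — obtaining constants $\delta_1,\delta_{2A},\delta_{2B}$ and a combined contribution $s(c-1)+\big(\text{correction}\big)$. Fourth, using Lemma~\ref{lem:countcases} to convert each $t_i(c)$ and each subcase count into a multiple of the $t(c-j)$'s, and the Jacobsthal recursion $t(c)=t(c-1)+2t(c-2)$ to simplify, I would collect the correction terms and verify that the total collapses to exactly $3t(c-2)$, yielding $s(c)=s(c-1)+2s(c-2)+3t(c-2)$. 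Finally I would check the parity symmetry: the argument was written for $c$ odd, and for $c$ even one repeats it verbatim with $+\leftrightarrow -$ roles swapped in the final runs and $\overline r$ in place of $r$, which changes none of the local pictures.

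The main obstacle I expect is the bookkeeping in the third step: the replacement moves in cases~1 and~2 change the crossing number by one rather than deleting runs outright, so the "before" and "after" diagrams live over $T(c)$ and $T(c-1)$ respectively, and one must be careful that the splitting of case~2 into 2A/2B (and the matching of 2A with case~1 at level $c-1$, as in the proof of Lemma~\ref{lem:countcases}) is exactly compatible with how Seifert circles merge or split when a vertically-oriented crossing is exchanged for a horizontally-oriented one. Getting the constants $\delta_1,\delta_{2A},\delta_{2B}$ right and seeing the telescoping with $s(c-1)$ is where an off-by-one in the count of circles would propagate into the wrong coefficient; the safeguard is that every one of these local moves is drawn explicitly in Table~\ref{tab:Seifert}, so each $\delta$ can be read off a picture rather than argued in the abstract. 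Once the constants are fixed, the remaining algebra is a routine substitution using Lemma~\ref{lem:countcases} and Proposition~\ref{prop:countterms}.
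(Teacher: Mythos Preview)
Your proposal is correct and follows essentially the same approach as the paper: partition $T(c)$ into the four cases, use Table~\ref{tab:Seifert} to read off the constant change $\delta_i$ in the number of Seifert circles under each reduction move (with the necessary subdivision of case~2 into 2A and 2B, where indeed $\delta_{2B}=-1$ has the opposite sign), match the images of cases~1 and~2 with the cases of $T(c-1)$ exactly as in Lemma~\ref{lem:countcases}, and then simplify the correction terms using that lemma and the Jacobsthal recursion. The paper carries this out with the explicit values $\delta_3=0$, $\delta_4=2$, $\delta_1=1$, $\delta_{2A}=1$, $\delta_{2B}=-1$, and your anticipated ``main obstacle'' in the telescoping of cases~1 and~2 against $s(c-1)$ is precisely where the paper spends its effort.
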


\begin{proof}
Following the ideas from earlier in this section, we consider the contributions to $s(c)$ from each of the four cases, calling these $s_1(c)$, $s_2(c)$, $s_3(c)$, and $s_4(c)$ so that $s(c)=s_1(c)+s_2(c)+s_3(c)+s_4(c)$. Refer to Table \ref{tab:Seifert} for pictures of each of the cases, where the orientations of the crossings are determined by Lemma \ref{lem:or1}.

In case 3, the final string $+-{}-+$ in a word with crossing number $c$ is replaced by $+$ in a new word with crossing number $c-2$. The partial Seifert states in the last column of Table \ref{tab:Seifert} before and after the replacement will have the same number of components when completed. Therefore $s_3(c) = s(c-2)$, the total number of Seifert circles from $T(c-2)$.

In case 4, the final string $++-+$ in a word with crossing number $c$ is replaced by $+$ in a new word with crossing number $c-2$. When the partial Seifert states in the last column of Table \ref{tab:Seifert} are completed, the state before the replacement will have two more components than the state after the replacement. Thus $s_4(c)=s(c-2)+2t(c-2)$, the total number of Seifert circles from $T(c-2)$ and additionally counting two circles for each element in $T(c-2)$.

In case 1,  the final string $+-+$ in a word with crossing number $c$ is replaced by a $++-$ in a new word with crossing number $c-1$. When the partial Seifert states in the last column of Table \ref{tab:Seifert} are completed, the state before the replacement will have one more component than the state after the replacement.
Thus $s_1(c)$ is equal to the sum of the total number of Seifert circles in words in $T(c-1)$ that end with $++-$ and $t_1(c)$, the number of words in case 1. The subset of $T(c-1)$ consisting of words ending with $++-$ can be partitioned into the subset of words ending in $-++-$ (case 3 for $c-1$ crossings) and the subset of words ending in $-{}-++-$ (case 2 for $c-1$ crossings). Thus the total number of Seifert circles is
\[s_1(c) = s_2(c-1) + s_3(c-1) + t_1(c) = s_2(c-1)+s_3(c-1)+2t(c-3).\]

In case 2, the final string $++ -{}-+$ in a word $w\in T(c)$ is replaced by $+-$, obtaining a diagram with $c-1$ crossings. The $(c-3)$rd run in $w$ is either a single $-$ or a double $-{}-$; we name these cases $2A$ and $2B$, respectively. So in case $2A$, the final string $-++-{}-+$ in $w$ is replaced with $-+-$, and in case $2B$, the final string $-{}-++-{}-+$ in $w$ is replaced with $-{}-+-$. Let $s_{2A}(c)$ and $s_{2B}(c)$ be the number of Seifert circles coming from words in $T(c)$ in cases $2A$ and $2B$, respectively.

In case $2A$, Table \ref{tab:Seifert} shows that the Seifert state before the replacement has one more component than the Seifert state after the replacement. Because the replacement words end with $-+-$, the set of replacement words for case $2A$ is case 1 for $c-1$ crossings. Therefore $s_{2A}(c) = s_1(c-1) + t_1(c-1)$.  In case $2B$, Table \ref{tab:Seifert} shows that the Seifert state before the replacement has one fewer component than the Seifert state after the replacement. Because the replacement words end with $-{}-+-$, the set of replacement words is case 4 for $c-1$ crossings. Thus $s_{2B}(c) = s_4(c-1) - t_4(c-1)$.

Lemma \ref{lem:countcases} implies that $t_1(c-1) = 2t(c-4)$ and $t_4(c-1)=t(c-3)$. Therefore,
\begin{align*}
s_2(c) = & \; s_{2A}(c) + s_{2B}(c)\\
= & \; [s_1(c-1) + t_1(c-1)] + [s_4(c-1) - t_4(c-1)]\\
= & \; s_1(c-1) + s_4(c-1) -t(c-3) + 2t(c-4) .
\end{align*}

Hence, we have 
\begin{align*}
s(c) = & \; s_1(c)+s_2(c)+s_3(c)+s_4(c)\\
= & \; [s_2(c-1) + s_3(c-1) + 2t(c-3)] + [s_1(c-1) + s_4(c-1) -t(c-3) + 2t(c-4)]\\ & \;+ s(c-2) + s(c-2)+ 2t(c-2)\\
 = &\; \sum_{i=1}^4 s_i(c-1) + 2s(c-2) + [t(c-3) + 2t(c-4)] + 2t(c-2)\\
 = & \;  s(c-1) + 2s(c-2) + 3t(c-2).
\end{align*}
\end{proof}

\subsection{Palindromic case}
\label{subsec:palindromic}
Recall that $T_p(c)$ is the set of strings in $\{+,-\}$ of palindromic type for crossing number $c$.  Alternatively we may abuse notation by using $T_p(c)$ to refer to the set of the corresponding alternating knot diagrams.  Let $t_p(c)$ be the number of elements in the set $T_p(c)$.  
Theorem \ref{thm:list} states that all 2-bridge knots are counted twice in $T(c)$ \emph{except} for words of palindromic type in $T_p(c)$, which are only counted once.  For odd $c$, such words are indeed palindromes; for even $c$, the words need to be read backwards and then have all $+$'s changed to $-$'s and vice versa. Equation \ref{eq:avseifert} states that the average number of Seifert circles in an alternating diagram of a $2$-bridge knot with crossing number $c$ is $\overline{s}_c = \frac{s(c) + s_p(c)}{4|\mathcal{K}_c|}$. In this subsection we mirror the previous subsection to obtain a recursive formula for $s_p(c)$.

In the discussion below, we consider separately the cases of odd $c$ and even $c$; so let us define $c=2i+1$ and $c=2i$ in these cases, respectively.  Let $T_{po}(i)$ and $T_{pe}(i)$ be the respective sets, and let $t_{po}(i)$ and $t_{pe}(i)$ be the number of elements in $T_{po}(i)$ and $T_{pe}(i)$, respectively.

\begin{proposition}
\label{prop:numberpalindromic}
The number $t_p(c)$ of words of palindromic type in $T_p(c)$ satisfies the recursion $t_p(c)=t_p(c-2)+2t_p(c-4)$. Moreover,
\[t_p(c) = \begin{cases} 
J\left(\frac{c-2}{2}\right) =  \frac{2^{(c-2)/2} - (-1)^{(c-2)/2}}{3} & \text{if $c$ is even and}\\
J\left(\frac{c-1}{2}\right) = \frac{2^{(c-1)/2} - (-1)^{(c-1)/2}}{3} & \text{if $c$ is odd,}\\
\end{cases}
\]
where $J(n)$ is the $n$th Jacobsthal number.
\end{proposition}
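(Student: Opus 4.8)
The plan is to mirror the structure of Section \ref{subsec:general} but at the level of palindromic words, splitting into the odd and even cases since the defining symmetry differs ($w=r(w)$ versus $w=\overline{r}(w)$). In each case, a word of palindromic type is determined by its ``first half,'' so I expect to encode a word in $T_p(c)$ by the sequence of exponents $(\varepsilon_1,\dots,\varepsilon_k)$ up to the center, subject to $\varepsilon_1=1$, $\varepsilon_i\in\{1,2\}$, and a congruence condition on the total length $\ell=\sum_{i=1}^c\varepsilon_i \equiv 1 \bmod 3$. For odd $c=2i+1$ the center run $\varepsilon_{i+1}$ sits in the middle and the palindrome forces $\varepsilon_j=\varepsilon_{c+1-j}$, so $\ell = 2\sum_{j=1}^{i}\varepsilon_j + \varepsilon_{i+1}$; for even $c=2i$ the reverse-mirror symmetry pairs run $j$ with run $c+1-j$ and forces $\varepsilon_j=\varepsilon_{c+1-j}$ again (the sign flip only swaps $+$/$-$, not the exponent), so $\ell = 2\sum_{j=1}^{i}\varepsilon_j$. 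The key observation is that a length condition mod $3$ on $\ell$ becomes, after dividing by the factor of $2$ in front of the half-sum, a length condition mod $3$ on the half-word's length, because $2$ is invertible mod $3$. This is exactly the same kind of constraint that governs $T(c')$ for an appropriate smaller $c'$, which is why the Jacobsthal numbers reappear with halved index.

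First I would set up the two-to-one (or one-to-one, for the center-run subtlety) correspondence between $T_p(c)$ and a set of half-words, then establish base cases: compute $t_p(c)$ directly for the smallest few values of $c$ in each parity class (for instance $T_p(3)=\{+--+\}$ and $T_p(4)=\{+-+-\}$ give $t_p(3)=t_p(4)=1$, and one would also need $t_p(5)$, $t_p(6)$, and enough further small cases to anchor the recursion of order $2$ in the variable stepping by $2$). Next I would prove the recursion $t_p(c)=t_p(c-2)+2t_p(c-4)$ by adapting the run-removal arguments from Proposition \ref{prop:countterms} and Lemma \ref{lem:countcases}: looking at the penultimate runs of a palindromic word (equivalently, the runs adjacent to the center on one side), one partitions $T_p(c)$ into the same four cases (single--single, double--double, single--double, double--single), and the replacement moves must now be performed \emph{symmetrically} on both sides of the center so that palindromic type is preserved. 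Removing a symmetric pair of length-three blocks drops the crossing number by $4$ and preserves the length condition mod $3$ (since $6\equiv 0 \bmod 3$), giving the $2t_p(c-4)$ term; the single-to-double swap on both sides drops the crossing number by $2$ and the ``case 1 vs. case 2'' bookkeeping collapses into one set $T_p(c-2)$, giving the $t_p(c-2)$ term. Finally, since the recursion $t_p(c)=t_p(c-2)+2t_p(c-4)$ is just the Jacobsthal recurrence $J(n)=J(n-1)+2J(n-2)$ in the variable $n=\lfloor (c-1)/2\rfloor$ (or $\lfloor(c-2)/2\rfloor$ for even $c$; the two parity subsequences decouple), the closed formula follows from the base cases together with the closed form $J(n)=\frac{2^n-(-1)^n}{3}$ quoted before Proposition \ref{prop:countterms}.

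The main obstacle I anticipate is the careful bookkeeping of the center of the word, which behaves differently in the odd and even cases and interacts with the $\varepsilon_1=\varepsilon_c=1$ boundary condition and the length-mod-$3$ condition. In the odd case the central run $\varepsilon_{i+1}$ is unpaired and can independently be $1$ or $2$, which affects both parity and the residue mod $3$; in the even case there is no unpaired run, but the reverse-mirror (rather than plain reverse) symmetry means one must check that the sign pattern $(+)^{\varepsilon_1}(-)^{\varepsilon_2}\cdots$ is actually compatible with $w=\overline{r}(w)$ — i.e., that the alternation of $+$/$-$ runs is consistent with reading backwards and flipping signs, which works out precisely because $c$ is even. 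Handling these center cases correctly is what pins down the base cases and confirms that the recursion has order exactly $2$ (in steps of $2$) rather than something messier; once the correspondence with half-words is nailed down, the rest is a routine transcription of the arguments already given for $T(c)$.
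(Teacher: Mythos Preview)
Your proposal is correct and follows essentially the same route as the paper: split into odd and even parity, partition $T_p(c)$ according to the lengths of the runs adjacent to the center, and perform the replacement moves of Proposition~\ref{prop:countterms} symmetrically on both sides so that palindromic type is preserved, yielding the recursion $t_p(c)=t_p(c-2)+2t_p(c-4)$ and hence the Jacobsthal closed form. The paper carries this out with exactly the four-case partition you describe (labelled $1_{pe}$--$4_{pe}$ and $1_{po}$--$4_{po}$), and your anticipated obstacle---the differing behaviour of the center in the odd and even cases---is precisely where the paper's casework is concentrated; in particular, in the odd case the paper further subdivides cases $3_{po}$ and $4_{po}$ according to the runs flanking the central triple, which is the concrete form of the ``center bookkeeping'' you flag.
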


When restricting parity, this follows a similar pattern as the recursion $t(c)=t(c-1)+2t(c-2)$ for $t(c)$.

\begin{proof}
We proceed by induction on $c$. The base cases $t_p(3)=t_p(4)=1$ and $t_p(5)=t_p(6)=1$ are satisfied by the proof of Proposition \ref{prop:countterms} and Table \ref{tab:c456}, respectively.

Consider separately the number of terms $t_{pe}(i)$ and $t_{po}(i)$ for $c=2i$ and $c=2i+1$, respectively, with the goal of showing the recursion mentioned in the remark above.  Suppose that $c=2i$ is even, and let $w\in T_{pe}(i)$. Since $w=\overline{r}(w)$, the $i$th and $(i+1)$st runs must have the same length but be opposite symbols, and the $(i-1)$st and $(i+2)$nd runs must have the same length but be opposite symbols. Without loss of generality, assume $i$ is even; then the $(i-1)$st run is a single $+$ or double $+$, and the $i$th run is a single $-$ or a double $-{}-$. 
Then the $(i-1)$st and $i$th runs must be exactly one of the following cases:
\begin{itemize}
	\item[(1$_{pe}$)] a single $+$ followed by a single $-$,
	\item[(2$_{pe}$)] a double $++$ followed by a double $-{}-$,
	\item[(3$_{pe}$)] a single $+$ followed by a double $-{}-$, or
	\item[(4$_{pe}$)] a double $++$ followed by a single $-$.
\end{itemize}

If we replace the center four runs $+-+-$ in case 1$_{pe}$ with $++-{}-$, then two crossings can be removed without changing the length.  If we replace the center four runs $++-{}-++-{}-$ in case 2$_{pe}$ with $+-$, then two crossings can be removed without changing the length requirement modulo 3.  Furthermore, in both cases this does not affect the parity of the number of crossings, and we are left with $c-2$ crossings. These two cases partition $T_p(c-2)$, the subset of $T(c-2)$ consisting words of palindromic type with crossing number $c-2$.  In case 2$_{pe}$, the $i$th run is a single, and in case 1$_{pe}$, it is a double.  Thus these two cases together contribute $t_p(c-2)$ words.

The strings $-++-{}-+$ and $-{}-+-++$ in positions $i-1$ through $i+2$ in cases 3$_{pe}$ and 4$_{pe}$ each have length six, which is convenient for our model.  If these six crossings are removed, then the length requirement modulo 3 remains satisfied.  What is left after removal in each case is the set $T_p(c-4)$, and so cases 3 and 4 contribute $2t_p(c-4)$ words. Hence if $c$ is even, then $t_p(c)=t_p(c-2) + 2t_p(c-4)$.

Since $t_p(4)=t_p(6)=1$ and $t_p(c)=t_p(c-2) + 2t_p(c-4)$ when $c$ is even, the sequence $t_p(2n+2)$ for $n=1,2,\dots$ is the Jacobsthal sequence. Thus, if $c$ is even, then 
\[t_p(c) = J\left(\frac{c-2}{2}\right) =  \frac{2^{(c-2)/2} - (-1)^{(c-2)/2}}{3}.\]

Now suppose $c=2i+1$ is odd, and let $w\in T_{po}(i)$. Since $c=2i+1$ is odd, the $(i+1)$st run is in the middle of the word, and since $w=r(w)$, the $i$th run and the $(i+2)$nd run are the same length and consist of the same symbol. 
Without loss of generality, assume $i$ is odd; thus the $(i+1)$st run is a single $-$ or double $-{}-$.  Then the $i$th through $(i+2)$nd runs must be exactly one of the following cases:
\begin{itemize}
	\item[(1$_{po}$)] a single $+$ followed by a double $-{}-$ followed by a single $+$,
	\item[(2$_{po}$)] a double $++$ followed by a single $-$ followed by a double $++$,
	\item[(3$_{po}$)] a single $+$ followed by a single $-$ followed by a single $+$, or
	\item[(4$_{po}$)] a double $++$ followed by a double $-{}-$ followed by a double $++$.
\end{itemize}

If we replace the string $+--+$ in case 1$_{po}$ with a single $+$ or if we replace the string $++-++$ in case 2$_{po}$  with a double $++$, then two crossings can be removed without changing the length requirement modulo 3.  Furthermore this does not affect the parity of the number of crossings, and we are left with $c-2$ crossings. These two cases partition $T_p(c-2)$ the subset of words of palindromic type with crossing number $c-2$.  In case 1$_{po}$ the middle run is a single and in case 2$_{po}$ it is a double.  Thus these two cases together contribute $t_p(c-2)$ words.

In case $3_{po}$, the $i$th through $(i+2)$nd runs are $+-+$. There are two possibilities for the $(i-1)$st through the $(i+3)$rd runs: either $ - + - + -$ or $-{}- + - + -{}-$. The string $ - + - + -$ can be replaced with $-{}-$, and the string $-{}- + - + -{}-$ can be replaced with $-$. These replacements respect the length condition modulo 3 and result in words of palindromic type with crossing number $c-4$ in $T_p(c-4)$. In the first replacement, the middle run is a double $-{}-$, and in the second replacement, the middle run is a single $-$; therefore, these two subcases partition $T_p(c-4)$ and contribute $t_p(c-4)$ words.

In case $4_{po}$, the $i$th through $(i+2)$nd runs are $++-{}-++$. There are two possibilities for the $(i-1)$st through the $(i+3)$rd runs: either $-++-{}-++-$ or $-{}- ++ -{}- ++ -{}-$. The string $-++-{}-++-$ can be replaced with $-{}-$, and the string $-{}- ++ -{}- ++ -{}-$ can be replaced with $-$. These replacements respect the length condition modulo 3 and result in words of palindromic type with crossing number $c-4$ in $T_p(c-4)$. In the first replacement, the middle run is a double $-{}-$, and in the second replacement, the middle run is a single $-$; therefore, these two subcases partition $T_p(c-4)$ and contribute $t_p(c-4)$ words. Thus when $c$ is odd, $t_p(c) = t_p(c-2)+2t_p(c-4)$. 

Since $t_p(3)=t_p(5)=1$ and $t_p(c) = t_p(c-2)+2t_p(c-4)$ when $c$ is odd, the sequence $t_p(2n+1)$ for $n=1,2,\dots$ is the Jacobsthal sequence. Thus, if $c$ is odd, then 
\[t_p(c) = J\left(\frac{c-1}{2}\right) = \frac{2^{(c-1)/2} - (-1)^{(c-1)/2}}{3}.\]
\end{proof}

\begin{example}
\label{ex:c9counttermsp}
Table \ref{tab:c579p} shows the words of palindromic type in $T_p(5)$, $T_p(7)$, and $T_p(9)$. Note that for $c=9$, we have even $i$, which is opposite the discussion in the proof above.  Subwords of words in $T_p(9)$ in parentheses are replaced according to the proof of Proposition \ref{prop:numberpalindromic} to obtain the words on the left in either $T_p(5)$ or $T_p(7)$. We see that $t_p(9) = t_p(7) + 2t_p(5)$.
\end{example}

\begin{center}
\begin{table}[h]
\begin{tabular}{|c|c||c|c|}
\hline
%
$T_p(5)$ & $+-{}-(+)-{}-+$ 				& $+-{}-(++-{}-++-{}-++)-{}-+$ 				& \\
\cline{1-2}
$T_p(5)$ & $+-{}-(+)-{}-+$ 				& $+-{}-(++-+-++)-{}-+$ 			& \\
\cline{1-2}
\multirow{3}{*}{$T_p(7)$}			& $+-+(-)+-+$ 					& $+-+(-++-)+-+$ 	& $T_p(9)$\\
														& $+-++(-{}-)++-+$ 			& $+-++(-{}-+-{}-)++-+$ 				& \\
														& $+-{}-+(-{}-)+-{}-+$ 	& $+-{}-+(-{}-+-{}-)+-{}-+$ 		& \\
\hline
\end{tabular}
\caption{The sets $T_p(5)$, $T_p(7)$ and $T_p(9)$ with the subwords in parentheses replaced as in the proof of Proposition \ref{prop:numberpalindromic}.}
\label{tab:c579p}
\end{table}
\end{center}

\begin{example}
\label{ex:c10counttermsp}
Table \ref{tab:c6810p} shows the words of palindromic type in $T_p(6)$, $T_p(8)$, and $T_p(10)$.  Note that for $c=10$, we have odd $i$, which is opposite the discussion in the proof above.  Subwords of words in $T_p(10)$ in parentheses are replaced according to the proof of Proposition \ref{prop:numberpalindromic} to obtain the words on the left in either $T_p(6)$ or $T_p(8)$. We see that $t_p(10) = t_p(8) + 2t_p(6)$.
\end{example}

\begin{center}
\begin{table}[h]
\begin{tabular}{|c|c||c|c|}
\hline
%
$T_p(6)$ & $+-{}-++()-{}-++-$ 				& $+-{}-++(-++-{}-+)-{}-++-$ 				& \\
\cline{1-2}
$T_p(6)$ & $+-{}-++()-{}-++-$ 				& $+-{}-++(--+-++)-{}-++-$ 					& \\
\cline{1-2}
\multirow{3}{*}{$T_p(8)$}			& $+-+(--++)-+-$ 		& $+-+(-+-+)-+-$ 						& $T_p(10)$\\
														& $+-++(-+)-{}-+-$ 	& $+-++(--++-{}-++)-{}-+-$ 	& \\
														& $+-{}-+(-+)-++-$ 	& $+-{}-+(--++-{}-++)-++-$ 		& \\
\hline
\end{tabular}
\caption{The sets $T_p(6)$, $T_p(8)$, and $T_p(10)$ with the subwords in parentheses replaced as in the proof of Proposition \ref{prop:numberpalindromic}.}
\label{tab:c6810p}
\end{table}
\end{center}

We are now ready to prove the recursive formula for $s_p(c)$, the total number of Seifert circles from $T_p(c)$.
\begin{theorem}
\label{thm:Seifertrecursionpalindrome}
Let $s_p(c)$ be the total number of Seifert circles over all 2-bridge knots of palindromic type with crossing number $c$ for all knots appearing in $T_p(c)$.  Then $s_p(c)$ satisfies the recursion $s_p(c)= s_p(c-2) + 2s_p(c-4) + 6t_p(c-4)$.
\end{theorem}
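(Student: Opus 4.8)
The plan is to mirror the proof of Theorem \ref{thm:Seifertrecursion}, replacing the partition of $T(c)$ into its four ``penultimate-runs'' cases with the partition of the palindromic-type words supplied in the proof of Proposition \ref{prop:numberpalindromic}. Concretely, fix the parity of $c$; assume $c$ is even (the odd case is handled the same way using cases $1_{po}$--$4_{po}$ instead of $1_{pe}$--$4_{pe}$), and write $s_{p,j}(c)$ for the number of Seifert circles contributed by the words of palindromic type whose central four runs fall into case $j_{pe}$, so that $s_p(c)=s_{p,1}(c)+s_{p,2}(c)+s_{p,3}(c)+s_{p,4}(c)$. The key point is that each of the replacement moves from Proposition \ref{prop:numberpalindromic} is performed on a \emph{central} block of runs, and because the alternating diagram associated with a palindromic word has a corresponding left-right (or up-down) symmetry, the local change to the partial Seifert state caused by the central replacement is itself symmetric: whatever happens on one side of the central block of crossings is duplicated on the other side. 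Thus the net change in the number of Seifert circles under a central replacement is \emph{twice} the change one computes for the analogous non-central replacement in the proof of Theorem \ref{thm:Seifertrecursion}.

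With that principle in hand, I would run the same bookkeeping as before. In cases $3_{pe}$ and $4_{pe}$ the central six-run block is deleted outright, dropping to $c-4$ crossings; by the doubling principle the Seifert-circle count changes by $2$ times the change in the corresponding one-sided deletion, and the proof of Proposition \ref{prop:numberpalindromic} already tells us cases $3_{pe}$ and $4_{pe}$ together biject with $T_p(c-4)$. So their combined contribution is $2s_p(c-4)+(\text{a constant}\times t_p(c-4))$, where the constant is $2$ times the per-word circle change observed in the one-sided analogue (cf. case $4$ of Theorem \ref{thm:Seifertrecursion}, where the one-sided move contributed $+2$ per word). In cases $1_{pe}$ and $2_{pe}$ the central block is replaced by a shorter central block, dropping to $c-2$ crossings, and these two cases together biject with $T_p(c-2)$; again by the doubling principle their combined contribution is $s_p(c-2)$ plus $2$ times the per-word circle change of the one-sided analogue. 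Summing the four contributions and using $t_p(c-2)=t_p(c-4)+2t_p(c-6)$ (Proposition \ref{prop:numberpalindromic}) to consolidate the constant-times-$t_p$ terms should collapse everything to
\[
s_p(c) = s_p(c-2) + 2s_p(c-4) + 6\,t_p(c-4).
\]
One then checks that the coefficient $6$ is exactly twice the coefficient $3$ appearing in Theorem \ref{thm:Seifertrecursion}, which is the sanity check that the doubling principle has been applied consistently. The odd case $c=2i+1$ is carried out identically, using cases $1_{po}$--$4_{po}$ and the same symmetry-doubling of the local Seifert-state changes, and it produces the same recursion.

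The main obstacle is verifying the symmetry-doubling principle rigorously rather than pictorially: one must confirm that the partial Seifert state of a palindromic alternating diagram genuinely decomposes as a left half, a central gadget, and a mirror-image right half, so that replacing the central gadget changes the component count by an even number equal to twice the one-sided change — and in particular that the central replacement never merges the left-half arcs directly with the right-half arcs in a way that would break the doubling (the orientation data from Lemma \ref{lem:or1}, applied to positions modulo $3$ near the center, is what one needs to rule this out, and some care is required because the central runs can sit in various residues mod $3$). A secondary, purely combinatorial obstacle is tracking the per-word constants through the four cases and the $t_p$ recursion so that they really do sum to $6t_p(c-4)$; I expect this to work out cleanly but it will need a short table, analogous to Table \ref{tab:Seifert}, showing each central string, its alternating diagram, and its partial Seifert state before and after replacement, for both parities of $c$ and both parities of $i$.
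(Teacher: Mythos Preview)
Your overall strategy matches the paper's: partition $T_p(c)$ by the central runs as in Proposition~\ref{prop:numberpalindromic}, track the change in Seifert circles under each central replacement via a table of partial Seifert states, and sum. However, the \emph{symmetry-doubling principle} you propose is not correct as stated, and relying on it would lead you to the wrong bookkeeping. Concretely, in cases $1_{pe}$ and $2_{pe}$ the paper's computation (see Table~\ref{tab:SeifertPalindromeEven}) shows that the central replacement removes two horizontally-oriented crossings and leaves the number of Seifert components \emph{unchanged}, so $s_{p,1}(c)+s_{p,2}(c)=s_p(c-2)$ exactly, with no $t_p$ correction term. Your heuristic would instead predict a change of $2\times 1=2$ per word (doubling the one-sided case~1 change from Theorem~\ref{thm:Seifertrecursion}), which is wrong. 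Consequently there is no need to invoke the recursion $t_p(c-2)=t_p(c-4)+2t_p(c-6)$ at all.

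The more substantive gap is in cases $3_{pe}$ and $4_{pe}$: the paper shows that the Seifert-circle change under the central deletion is \emph{not} constant across either case, but depends on whether the run immediately outside the central block (the $(i-2)$nd run, in the paper's indexing) is single or double. Each of cases $3_{pe}$ and $4_{pe}$ must therefore be split into subcases $A$ and $B$, and the per-word changes turn out to be $+4$, $+2$, $+2$, $+4$ for $3A$, $3B$, $4A$, $4B$ respectively. These are not simply ``twice'' any single one-sided quantity; the asymmetry between $A$ and $B$ arises because the orientation data from Lemma~\ref{lem:or1} (positions mod $3$) makes the outer run interact differently with the central Seifert arcs. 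Only after this $A/B$ split does the sum collapse: the $A$-subcases biject with the words in $T_p(c-4)$ whose center has a particular form and the $B$-subcases with the complement, so that $s_{p,3}(c)+s_{p,4}(c)=2s_p(c-4)+6t_p(c-4)$. Your table will reveal this once you draw it, but the doubling shortcut will not get you there.
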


\begin{proof}
As in the proof of Proposition \ref{prop:numberpalindromic}, we consider separately the cases for even $c=2i$ and odd $c=2i+1$ crossing number, with notation $s_{pe}(i)=s_p(2i)$ and $s_{po}(i)=s_p(2i+1)$. Suppose $c=2i$ is even. In the same spirit as Lemma \ref{lem:countcases}, define $t_{pe1}(i)$, $t_{pe2}(i)$, $t_{pe3}(i)$, and $t_{pe4}(c)$ to be the number of words in cases $1_{pe}$, $2_{pe}$, $3_{pe}$, and $4_{pe}$, respectively.  Similarly, as in the proof of Theorem \ref{thm:Seifertrecursion}, define $s_{pe1}(i)$, $s_{pe2}(i)$, $s_{pe3}(i)$, and $s_{pe4}(c)$ to be the number of Seifert circles coming from words in cases $1_{pe}$, $2_{pe}$, $3_{pe}$, and $4_{pe}$, respectively. Then $s_{pe}(i)=s_{pe1}(i)+s_{pe2}(i)+s_{pe3}(i)+s_{pe4}(i)$.  Refer to Table \ref{tab:SeifertPalindromeEven} for pictures of each of the cases, where the orientations of the crossings are determined by Lemma \ref{lem:or1}.

In case 1$_{pe}$, the center string $+-+-$ in a word with crossing number $c$ is replaced by $++-{}-$ in a new word with crossing number $c-2$, and in case $2_{pe}$, the center string $++-{}-++-{}-$ in a word with crossing number $c$ is replaced by $+-$ in a new word with crossing number $c-2$. Lemma \ref{lem:or1} and the first four rows in Table \ref{tab:SeifertPalindromeEven} imply that the only changes caused by these replacements are the removal of two horizontally-oriented crossings. The Seifert states before and after the replacements have the same number of components. Since the center strings $+-$ and $++-{}-$ partition $T_{pe}(i-1)$, it follows that $s_{pe1}(i)+s_{pe2}(i)=s_{pe}(i-1)$.

As in the odd palindromic case of the proof of Proposition \ref{prop:numberpalindromic} above, we split cases 3$_{pe}$ and 4$_{pe}$ into two subcases called $A$ and $B$ depending on whether the ($i-2$)nd run is a single $-$ or a double $-{}-$, respectively. 

In case 3A$_{pe}$, the center string $-+-{}-++-+$ in a word with crossing number $c$ is replaced by $-+$ in a new word with crossing number $c-4$.  Lemma \ref{lem:or1} and the fifth and sixth rows in Table \ref{tab:SeifertPalindromeEven} imply that the Seifert state after the replacement has four fewer components than the Seifert state before the replacement. So in order to count $s_{pe3A}(i)$ we need to count the number of words in this case. The center string in the new word with crossing number $c-4$ is $-+$. The cases that have such a center word are 1$_{pe}$ and 3$_{pe}$ for crossing number $c-4$.  Thus $s_{pe3A}(i)=(s_{pe1}(i-2)+s_{pe3}(i-2))+4(t_{pe1}(i-2)+t_{pe3}(i-2))$.

In case 3B$_{pe}$, the center string $-{}-+-{}-++-++$ in a word with crossing number $c$ is replaced by $-{}-++$ in a new word with crossing number $c-4$.  Lemma \ref{lem:or1} and the seventh and eighth rows in Table \ref{tab:SeifertPalindromeEven} imply that the Seifert state after the replacement has two fewer components than the Seifert state before the replacement. So in order to count $s_{pe3B}(i)$ we need to count the number of words in this case.  The center string in the new word with crossing number $c-4$ is $-{}-++$. The cases that have such a center word are 2$_{pe}$ and 4$_{pe}$ for crossing number $c-4$.  Thus $s_{pe3B}(i)=(s_{pe2}(i-2)+s_{pe4}(i-2))+2(t_{pe2}(i-2)+t_{pe4}(i-2))$.

In case 4A$_{pe}$, the center string $-++-+-{}-+$ in a word with crossing number $c$ is replaced by $-+$ in a new word with crossing number $c-4$. Lemma \ref{lem:or1} and the ninth and tenth rows in Table \ref{tab:SeifertPalindromeEven} imply that the Seifert state after the replacement has two fewer components than the Seifert state before the replacement.  By a similar argument as case 3A$_{pe}$, we get $s_{pe4A}(i)=(s_{pe1}(i-2)+s_{pe3}(i-2))+2(t_{pe1}(i-2)+t_{pe3}(i-2))$.

In case 4B$_{pe}$, the center string $-{}-++-+-{}-++$ in a word with crossing number $c$ is replaced by $-{}-++$ in a new word with crossing number $c-4$. Lemma \ref{lem:or1} and the last two rows in Table \ref{tab:SeifertPalindromeEven} imply that the Seifert state after the replacement has four fewer components than the Seifert state before the replacement. By a similar argument as case 3B$_{pe}$, we get $s_{pe4B}(i)=(s_{pe2}(i-2)+s_{pe4}(i-2))+4(t_{pe2}(i-2)+t_{pe4}(i-2))$.

Thus
\begin{align*}
s_{pe3}(i) + s_{pe4}(i) = & \; s_{pe3A}(i) + s_{pe4B}(i) + s_{pe3B}(i) + s_{pe4A}(i) \\
= & \; (s_{pe1}(i-2)+s_{pe3}(i-2))+4(t_{pe1}(i-2)+t_{pe3}(i-2)) \\
& \; + (s_{pe2}(i-2)+s_{pe4}(i-2))+4(t_{pe2}(i-2)+t_{pe4}(i-2))\\
 & \; + (s_{pe2}(i-2)+s_{pe4}(i-2))+2(t_{pe2}(i-2)+t_{pe4}(i-2))\\
 & \; + (s_{pe1}(i-2)+s_{pe3}(i-2))+2(t_{pe1}(i-2)+t_{pe3}(i-2))\\
 = & \; 2\sum_{j=1}^4 s_{pej}(i-2) + 6 \sum_{j=1}^4 t_{pej}(i-2)\\
 = & \; 2s_{pe}(i-2) + 6 t_{pe}(i-2).
\end{align*}
Concluding the even length case, we have
\[s_{pe}(i) = \sum_{j=1}^4 s_{pej}(i) = s_{pe}(i-1) + 2s_{pe}(i-2) + 6 t_{pe}(i-2).\]

When $c=2i+1$ is odd, one can prove that $s_{po}(i) =  s_{po}(i-1) + 2s_{po}(i-2) + 6 t_{po}(i-2)$ in a similar fashion. The interested reader can work out the details from Table \ref{tab:SeifertPalindromeOdd}. Since $s_{pe}(i)=s_p(2i)$ and $s_{po}(i)=s_p(2i+1)$, it follows that
\[s_p(c) = s_p(c-2) + 2s_p(c-4)+6t_p(c-4).\]

\end{proof}

\begin{table}
\begin{tabular}{|c|c||c|c|c|}
\hline
Case & Crossing & String & Alternating Diagram & Seifert state \\
 & Number & & & \\
\hline
\hline
1$_{pe}$ & $c$ & \tiny{$+-+-$} & 
\begin{tikzpicture}[scale=.4]
\draw[white] (-.2,-.7) rectangle (10.2,2.7);
\draw (1,-.5) rectangle (3,2.5);
\draw (2,1) node{$R$};
\draw (8,1) node[rotate = 180]{$\overline{R}$};
\draw (7,-.5) rectangle (9,2.5);
\draw (0,0) -- (1,0);
\draw (9,2) -- (10,2);
\begin{scope}[rounded corners = 1mm]
	\draw (3,0) -- (4.3,1.3);
	\draw (3,1) -- (3.3,.7);
	\draw (3,2) -- (4,2) -- (5.3,.7);
	\draw (4.7,1.7) -- (5,2) -- (6,2) -- (7,1);
	\draw (3.7,.3) -- (4,0) -- (5,0) -- (6.3,1.3);
	\draw (6.7,1.7) -- (7,2);
	\draw (5.7,.3) -- (6,0) -- (7,0);
\end{scope}
\draw[->] (3.5,.5) -- (3.9,.9);
\draw[->] (3.7,.3) -- (3.9,.1);
\draw[->] (4.5,1.5) -- (4.1,1.9);
\draw[->] (4.7,1.7) -- (4.9,1.9);
\draw[->] (5.5,.5) -- (5.9,.9);
\draw[->] (5.3,.7) -- (5.1,.9);
\draw[->] (6.5,1.5) -- (6.9,1.1);
\draw[->] (6.7,1.7) -- (6.9,1.9);

\end{tikzpicture}
 &
 \begin{tikzpicture}[scale=.4]
\draw[white] (-.2,-.7) rectangle (10.2,2.7);
\draw (1,-.5) rectangle (3,2.5);
\draw (7,-.5) rectangle (9,2.5);
\draw (0,0) -- (1,0);
\draw (9,2) -- (10,2);
\begin{scope}[rounded corners = 1mm]
	\draw[->] (3,0) -- (3.5,.4) -- (4,0) -- (5,0) -- (5.4,.5) -- (5,1) -- (4.6,1.5) -- (5,2) -- (6,2) -- (6.5,1.6) -- (7,2);
	\draw[->] (3,2) -- (4,2) -- (4.4,1.5) -- (4,1) -- (3.5,.6) -- (3,1);
	\draw[->] (7,0) -- (6,0) -- (5.6,.5) -- (6,1) -- (6.5,1.4) -- (7,1);
\end{scope}
\draw[densely dashed] (1,0) -- (3,0);
\draw[densely dashed, rounded corners=1mm] (3,2) -- (2.6,1.5) -- (3,1);
\draw[densely dashed] (7,2) -- (9,2);
\draw[densely dashed, rounded corners=1mm] (7,1) -- (7.4,.5) -- (7,0);

\end{tikzpicture}
\\
\hline
1$_{pe}$ & $c-2$ & \tiny{$++ -{}-$} &
\begin{tikzpicture}[scale=.4]
\draw[white] (-.2,-.7) rectangle (8.2,2.7);
\draw (1,-.5) rectangle (3,2.5);
\draw (2,1) node{$R$};
\draw (6,1) node[rotate = 180]{$\overline{R}$};
\draw (5,-.5) rectangle (7,2.5);
\draw (0,0) -- (1,0);
\draw (7,2) -- (8,2);
\begin{scope}[rounded corners = 1mm]
	\draw (3,0) -- (4,0) -- (5,1);
	\draw (3,1) -- (3.3,1.3);
	\draw (3.7,1.7) -- (4,2) -- (5,2);
	\draw (3,2) -- (4.3,.7);
	\draw (4.7,.3) -- (5,0);
\end{scope}

\draw[->] (3.5,1.5) -- (3.1,1.9);
\draw[->] (3.7,1.7) -- (3.9,1.9);
\draw[->] (4.5,.5) -- (4.9,.9);
\draw[->] (4.3,.7) -- (4.1,.9);

\end{tikzpicture}
		&
 \begin{tikzpicture}[scale=.4]
\draw[white] (-.2,-.7) rectangle (8.2,2.7);
\draw (1,-.5) rectangle (3,2.5);
\draw (5,-.5) rectangle (7,2.5);
\draw (0,0) -- (1,0);
\draw (7,2) -- (8,2);
\begin{scope}[rounded corners = 1mm]
	\draw[->] (3,1) -- (3.4,1.5) -- (3,2);
	\draw[->] (5,0) -- (4.6,.5) -- (5,1);
	\draw[->] (3,0) -- (4,0) -- (4.4,.5) -- (4,1) -- (3.6,1.5) -- (4,2) -- (5,2);
\end{scope}
\draw[densely dashed] (1,0) -- (3,0);
\draw[densely dashed, rounded corners=1mm] (3,2) -- (2.6,1.5) -- (3,1);
\draw[densely dashed] (5,2) -- (7,2);
\draw[densely dashed, rounded corners=1mm] (5,1) -- (5.4,.5) -- (5,0);

\end{tikzpicture}
		
 \\
\hline
\hline
2$_{pe}$ & $c$ & \tiny{$++-{}-++-{}-$} &
\begin{tikzpicture}[scale=.4]
\draw[white] (-.2,-.7) rectangle (10.2,2.7);
\draw (1,-.5) rectangle (3,2.5);
\draw (2,1) node{$R$};
\draw (8,1) node[rotate = 180]{$\overline{R}$};
\draw (7,-.5) rectangle (9,2.5);
\draw (0,0) -- (1,0);
\draw (9,2) -- (10,2);
\begin{scope}[rounded corners = 1mm]
	\draw (3,0) -- (4,0) -- (5.3,1.3);
	\draw (5.7,1.7) -- (6,2) --(7,2);
	\draw (3,1) -- (3.3,1.3);
	\draw (3.7,1.7) -- (4,2) -- (5,2) -- (6.3,.7);
	\draw (6.7,.3) -- (7,0);
	\draw (3,2) -- (4.3,.7);
	\draw (4.7,.3) -- (5,0) -- (6,0) -- (7,1);
\end{scope}
\draw[->] (3.5,1.5) -- (3.9,1.1);
\draw[->] (3.7,1.7) -- (3.9,1.9);
\draw[->] (4.5,.5) -- (4.1,.1);
\draw[->] (4.7,.3) -- (4.9,.1);
\draw[->] (5.5,1.5) -- (5.9,1.1);
\draw[->] (5.3,1.3) -- (5.1,1.1);
\draw[->] (6.5,.5) -- (6.9,.9);
\draw[->] (6.7,.3) -- (6.9,.1);

\end{tikzpicture}

  & 
 \begin{tikzpicture}[scale=.4]
\draw[white] (-.2,-.7) rectangle (10.2,2.7);
\draw (1,-.5) rectangle (3,2.5);
\draw (7,-.5) rectangle (9,2.5);
\draw (0,0) -- (1,0);
\draw (9,2) -- (10,2);
\begin{scope}[rounded corners = 1mm]
	\draw[->] (3,2) -- (3.5,1.6) -- (4,2) -- (5,2) -- (5.4,1.5) -- (5,1) -- (4.6,.5) -- (5,0) -- (6,0) -- (6.5,.4) -- (7,0);
	\draw[->] (3,1) -- (3.5,1.4) -- (4,1) -- (4.4,.5) -- (4,0) -- (3,0);
	\draw[->] (7,2) -- (6,2) -- (5.6,1.5) -- (6,1) -- (6.5,.6) -- (7,1);
\end{scope}
\draw[densely dashed] (1,0) -- (3,2);
\draw[densely dashed, rounded corners=1mm] (3,1) -- (2.6,.5) -- (3,0);
\draw[densely dashed] (7,0) -- (9,2);
\draw[densely dashed, rounded corners=1mm] (7,2) -- (7.4,1.5) -- (7,1);

\end{tikzpicture}
  \\
\hline
2$_{pe}$ & $c-2$ & \tiny{$+-$} & 
\begin{tikzpicture}[scale=.4]
\draw[white] (-.2,-.7) rectangle (8.2,2.7);
\draw (1,-.5) rectangle (3,2.5);
\draw (2,1) node{$R$};
\draw (6,1) node[rotate = 180]{$\overline{R}$};
\draw (5,-.5) rectangle (7,2.5);
\draw (0,0) -- (1,0);
\draw (7,2) -- (8,2);
\begin{scope}[rounded corners = 1mm]
	\draw (3,0) -- (4.3,1.3);
	\draw (4.7,1.7) -- (5,2);
	\draw (3,1) -- (3.3,.7);
	\draw (3,2) -- (4,2) -- (5,1);
	\draw (3.7,.3) -- (4,0) -- (5,0);
\end{scope}

\draw[->] (3.5,.5) -- (3.1,.1);
\draw[->] (3.7,.3) -- (3.9,.1);
\draw[->] (4.5,1.5) -- (4.9,1.1);
\draw[->] (4.3,1.3) -- (4.1,1.1);

\end{tikzpicture}
		&
 \begin{tikzpicture}[scale=.4]
\draw[white] (-.2,-.7) rectangle (8.2,2.7);
\draw (1,-.5) rectangle (3,2.5);
\draw (5,-.5) rectangle (7,2.5);
\draw (0,0) -- (1,0);
\draw (7,2) -- (8,2);
\begin{scope}[rounded corners = 1mm]
	\draw[->] (3,1) -- (3.4,.5) -- (3,0);
	\draw[->] (5,2) -- (4.6,1.5) -- (5,1);
	\draw[->] (3,2) -- (4,2) -- (4.4,1.5) -- (4,1) -- (3.6,.5) -- (4,0) -- (5,0);
\end{scope}
\draw[densely dashed] (1,0) -- (3,2);
\draw[densely dashed, rounded corners=1mm] (3,0) -- (2.6,.5) -- (3,1);
\draw[densely dashed] (5,0) -- (7,2);
\draw[densely dashed, rounded corners=1mm] (5,1) -- (5.4,1.5) -- (5,2);

\end{tikzpicture}
		 \\
\hline
\hline
3A$_{pe}$ & $c$ & \tiny{$-+--++-+$} &
\begin{tikzpicture}[scale=.4]
\draw[white] (-.2,-.7) rectangle (12.2,2.7);
\draw (1,-.5) rectangle (3,2.5);
\draw (2,1) node{$R$};
\draw (10,1) node[rotate = 180]{$\overline{R}$};
\draw (9,-.5) rectangle (11,2.5);
\draw (0,0) -- (1,0);
\draw (11,2) -- (12,2);
\begin{scope}[rounded corners = 1mm]
	\draw (3,0) -- (4,0) -- (5,1) -- (5.3,.7);
	\draw (5.7,.3) -- (6,0) -- (8,0) -- (9,1);
	\draw (3,1) -- (3.3,1.3);
	\draw (3.7,1.7) -- (4,2) -- (6,2) -- (7,1) -- (7.3,1.3);
	\draw (7.7,1.7) -- (8,2) -- (9,2);
	\draw (3,2) -- (4.3,.7);
	\draw (4.7,.3) -- (5,0) -- (6.3,1.3);
	\draw (6.7,1.7) -- (7,2) -- (8.3,.7);
	\draw (8.7,.3) -- (9,0);
\end{scope}
\draw[->] (3.5,1.5) -- (3.1,1.9);
\draw[->] (3.7,1.7) -- (3.9,1.9);
\draw[->] (4.5,.5) -- (4.9,.9);
\draw[->] (4.3,.7) -- (4.1,.9);
\draw[->] (5.5,.5) -- (5.1,.1);
\draw[->] (5.7,.3) -- (5.9,.1);
\draw[->] (6.5,1.5) --(6.9,1.1);
\draw[->] (6.3,1.3) -- (6.1,1.1);
\draw[->] (7.5,1.5) -- (7.1,1.9);
\draw[->] (7.7,1.7) -- (7.9,1.9);
\draw[->] (8.5,.5) -- (8.9,.9);
\draw[->] (8.3,.7) -- (8.1,.9);

\end{tikzpicture}
& 
\begin{tikzpicture}[scale=.4]
\draw[white] (-.2,-.7) rectangle (12.2,2.7);
\draw (1,-.5) rectangle (3,2.5);
\draw (9,-.5) rectangle (11,2.5);
\draw (0,0) -- (1,0);
\draw (11,2) -- (12,2);
\begin{scope}[rounded corners = 1mm]
\draw[->] (3,0) -- (4,0) -- (4.4,.5) -- (4,1) -- (3.6,1.5) -- (4,2) -- (6,2) -- (6.4,1.5) -- (6,1) --(5.6,.5) -- (6,0) -- (8,0) -- (8.4,.5) -- (8,1) -- (7.6,1.5) -- (8,2) -- (9,2);
\draw[->] (3,1) -- (3.4,1.5) -- (3,2);
\draw[->] (9,0) -- (8.6,.5) -- (9,1);
\draw[->] (5,1) arc (90:-270:.4cm and .5cm);
\draw[->] (7,2) arc (90:450:.4cm and .5cm);
\end{scope}
\draw[densely dashed] (1,0) -- (3,0);
\draw[densely dashed, rounded corners =1mm] (3,1) -- (2.6,1.5) -- (3,2);
\draw[densely dashed] (9,2) -- (11,2);
\draw[densely dashed, rounded corners =1mm] (9,1) -- (9.4,.5) -- (9,0);

\end{tikzpicture}

\\
\hline
3A$_{pe}$ & $c-4$ & \tiny{$-+$} & 
\begin{tikzpicture}[scale=.4]
\draw[white] (-.2,-.7) rectangle (8.2,2.7);
\draw (1,-.5) rectangle (3,2.5);
\draw (2,1) node{$R$};
\draw (6,1) node[rotate = 180]{$\overline{R}$};
\draw (5,-.5) rectangle (7,2.5);
\draw (0,0) -- (1,0);
\draw (7,2) -- (8,2);
\begin{scope}[rounded corners = 1mm]
	\draw (3,0) -- (4,0) -- (5,1);
	\draw (3,1) -- (3.3,1.3);
	\draw (3.7,1.7) -- (4,2) -- (5,2);
	\draw (3,2) -- (4.3,.7);
	\draw (4.7,.3) -- (5,0);
\end{scope}

\draw[->] (3.5,1.5) -- (3.1,1.9);
\draw[->] (3.7,1.7) -- (3.9,1.9);
\draw[->] (4.5,.5) -- (4.9,.9);
\draw[->] (4.3,.7) -- (4.1,.9);

\end{tikzpicture}
		&
 \begin{tikzpicture}[scale=.4]
\draw[white] (-.2,-.7) rectangle (8.2,2.7);
\draw (1,-.5) rectangle (3,2.5);
\draw (5,-.5) rectangle (7,2.5);
\draw (0,0) -- (1,0);
\draw (7,2) -- (8,2);
\begin{scope}[rounded corners = 1mm]
	\draw[->] (3,1) -- (3.4,1.5) -- (3,2);
	\draw[->] (5,0) -- (4.6,.5) -- (5,1);
	\draw[->] (3,0) -- (4,0) -- (4.4,.5) -- (4,1) -- (3.6,1.5) -- (4,2) -- (5,2);
\end{scope}
\draw[densely dashed] (1,0) -- (3,0);
\draw[densely dashed, rounded corners=1mm] (3,2) -- (2.6,1.5) -- (3,1);
\draw[densely dashed] (5,2) -- (7,2);
\draw[densely dashed, rounded corners=1mm] (5,1) -- (5.4,.5) -- (5,0);

\end{tikzpicture}
 \\
\hline
\hline
3B$_{pe}$ & $c$ & \tiny{$--+--++-++$} & 
\begin{tikzpicture}[scale=.4]
\draw[white] (-.2,-.7) rectangle (12.2,2.7);
\draw (1,-.5) rectangle (3,2.5);
\draw (2,1) node{$R$};
\draw (10,1) node[rotate = 180]{$\overline{R}$};
\draw (9,-.5) rectangle (11,2.5);
\draw (0,0) -- (1,0);
\draw (11,2) -- (12,2);
\begin{scope}[rounded corners = 1mm]
	\draw (3,0) -- (4,1) -- (4.3,.7);
	\draw (3.7,.3) -- (4,0) -- (5,1) -- (5.3,.7);
	\draw (5.7,.3) -- (6,0) -- (9,0);
	\draw (3,1) -- (3.3,.7);
	\draw (7.7,1.7) -- (8,2) -- (9,1);
	\draw (3,2) -- (6,2) -- (7,1) -- (7.3,1.3);
	\draw (4.7,.3) -- (5,0) -- (6.3,1.3);
	\draw (6.7,1.7) -- (7,2) -- (8,1) -- (8.3,1.3);
	\draw (8.7,1.7) -- (9,2);
\end{scope}
\draw[->] (3.5,.5) -- (3.1,.1);
\draw[->] (3.7,.3) -- (3.9,.1);
\draw[->] (4.5,.5) -- (4.9,.9);
\draw[->] (4.3,.7) -- (4.1,.9);
\draw[->] (5.5,.5) -- (5.1,.1);
\draw[->] (5.7,.3) -- (5.9,.1);
\draw[->] (6.5,1.5) --(6.9,1.1);
\draw[->] (6.3,1.3) -- (6.1,1.1);
\draw[->] (7.5,1.5) -- (7.1,1.9);
\draw[->] (7.7,1.7) -- (7.9,1.9);
\draw[->] (8.5,1.5) -- (8.9,1.1);
\draw[->] (8.3,1.3) -- (8.1,1.1);

\end{tikzpicture}
& 
\begin{tikzpicture}[scale=.4]
\draw[white] (-.2,-.7) rectangle (12.2,2.7);
\draw (1,-.5) rectangle (3,2.5);
\draw (9,-.5) rectangle (11,2.5);
\draw (0,0) -- (1,0);
\draw (11,2) -- (12,2);
\begin{scope}[rounded corners = 1mm]
\draw[->] (3,2) -- (6,2) -- (6.4,1.5) -- (6,1) -- (5.6,.5) -- (6,0) -- (9,0);
\draw[->] (3,1) -- (3.4,.5) -- (3,0);
\draw[->] (9,2) -- (8.6,1.5) -- (9,1);
\draw[->] (5,1) arc (90:-270:.4cm and .5cm);
\draw[->] (4,1) arc (90:450:.4cm and .5cm);
\draw[->] (7,2) arc (90:450:.4cm and .5cm);
\draw[->] (8,2) arc (90:-270:.4cm and .5cm);
\end{scope}
\draw[densely dashed] (1,0) -- (3,2);
\draw[densely dashed, rounded corners =1mm] (3,0) -- (2.6,.5) -- (3,1);
\draw[densely dashed] (9,0) -- (11,2);
\draw[densely dashed, rounded corners =1mm] (9,1) -- (9.4,1.5) -- (9,2);

\end{tikzpicture}

\\
\hline
3B$_{pe}$ & $c-4$ & \tiny{$--++$} &
\begin{tikzpicture}[scale=.4]
\draw[white] (-.2,-.7) rectangle (8.2,2.7);
\draw (1,-.5) rectangle (3,2.5);
\draw (2,1) node{$R$};
\draw (6,1) node[rotate = 180]{$\overline{R}$};
\draw (5,-.5) rectangle (7,2.5);
\draw (0,0) -- (1,0);
\draw (7,2) -- (8,2);
\begin{scope}[rounded corners = 1mm]
	\draw (3,0) -- (4.3,1.3);
	\draw (4.7,1.7) -- (5,2);
	\draw (3,1) -- (3.3,.7);
	\draw (3,2) -- (4,2) -- (5,1);
	\draw (3.7,.3) -- (4,0) -- (5,0);
\end{scope}

\draw[->] (3.5,.5) -- (3.1,.1);
\draw[->] (3.7,.3) -- (3.9,.1);
\draw[->] (4.5,1.5) -- (4.9,1.1);
\draw[->] (4.3,1.3) -- (4.1,1.1);

\end{tikzpicture}
		&
 \begin{tikzpicture}[scale=.4]
\draw[white] (-.2,-.7) rectangle (8.2,2.7);
\draw (1,-.5) rectangle (3,2.5);
\draw (5,-.5) rectangle (7,2.5);
\draw (0,0) -- (1,0);
\draw (7,2) -- (8,2);
\begin{scope}[rounded corners = 1mm]
	\draw[->] (3,1) -- (3.4,.5) -- (3,0);
	\draw[->] (5,2) -- (4.6,1.5) -- (5,1);
	\draw[->] (3,2) -- (4,2) -- (4.4,1.5) -- (4,1) -- (3.6,.5) -- (4,0) -- (5,0);
\end{scope}
\draw[densely dashed] (1,0) -- (3,2);
\draw[densely dashed, rounded corners=1mm] (3,0) -- (2.6,.5) -- (3,1);
\draw[densely dashed] (5,0) -- (7,2);
\draw[densely dashed, rounded corners=1mm] (5,1) -- (5.4,1.5) -- (5,2);

\end{tikzpicture}
 \\
\hline
\hline
4A$_{pe}$ & $c$ & \tiny{$-++-+--+$} & 
\begin{tikzpicture}[scale=.4]
\draw[white] (-.2,-.7) rectangle (12.2,2.7);
\draw (1,-.5) rectangle (3,2.5);
\draw (2,1) node{$R$};
\draw (10,1) node[rotate = 180]{$\overline{R}$};
\draw (9,-.5) rectangle (11,2.5);
\draw (0,0) -- (1,0);
\draw (11,2) -- (12,2);
\begin{scope}[rounded corners = 1mm]
	\draw (3,0) -- (6,0) -- (7,1) -- (7.3,.7);
	\draw (7.7,.3) -- (8,0) -- (9,1);
	\draw (3,1) -- (3.3,1.3);
	\draw (3.7,1.7) -- (4,2) -- (5,1) -- (5.3,1.3);
	\draw (5.7,1.7) -- (6,2) -- (9,2);
	\draw (3,2) -- (4,1) -- (4.3,1.3);
	\draw (4.7,1.7) -- (5,2) -- (6.3,.7);
	\draw (6.7,.3) -- (7,0) -- (8,1) -- (8.3,.7);
	\draw (8.7,.3) -- (9,0);
\end{scope}
\draw[->] (3.5,1.5) -- (3.1,1.9);
\draw[->] (3.7,1.7) -- (3.9,1.9);
\draw[->] (4.5,1.5) -- (4.9,1.1);
\draw[->] (4.3,1.3) -- (4.1,1.1);
\draw[->] (5.5,1.5) -- (5.1,1.9);
\draw[->] (5.7,1.7) -- (5.9,1.9);
\draw[->] (6.5,.5) --(6.9,.9);
\draw[->] (6.3,.7) -- (6.1,.9);
\draw[->] (7.5,.5) -- (7.1,.1);
\draw[->] (7.7,.3) -- (7.9,.1);
\draw[->] (8.5,.5) -- (8.9,.9);
\draw[->] (8.3,.7) -- (8.1,.9);

\end{tikzpicture}
& 
\begin{tikzpicture}[scale=.4]
\draw[white] (-.2,-.7) rectangle (12.2,2.7);
\draw (1,-.5) rectangle (3,2.5);
\draw (9,-.5) rectangle (11,2.5);
\draw (0,0) -- (1,0);
\draw (11,2) -- (12,2);
\begin{scope}[rounded corners = 1mm]
\draw[->] (3,0) -- (6,0) -- (6.4,.5) -- (6,1) -- (5.6,1.5) -- (6,2) -- (9,2);
\draw[->] (3,1) -- (3.4,1.5) -- (3,2);
\draw[->] (9,0) -- (8.6,.5) -- (9,1);
\draw[->] (5,2) arc (90:450:.4cm and .5cm);
\draw[->] (4,2) arc (90:-270:.4cm and .5cm);
\draw[->] (7,1) arc (90:-270:.4cm and .5cm);
\draw[->] (8,1) arc (90:450:.4cm and .5cm);
\end{scope}
\draw[densely dashed] (1,0) -- (3,0);
\draw[densely dashed, rounded corners =1mm] (3,1) -- (2.6,1.5) -- (3,2);
\draw[densely dashed] (9,2) -- (11,2);
\draw[densely dashed, rounded corners =1mm] (9,1) -- (9.4,.5) -- (9,0);

\end{tikzpicture}

\\
\hline
4A$_{pe}$ & $c-4$ & \tiny{$-+$} & 
\begin{tikzpicture}[scale=.4]
\draw[white] (-.2,-.7) rectangle (8.2,2.7);
\draw (1,-.5) rectangle (3,2.5);
\draw (2,1) node{$R$};
\draw (6,1) node[rotate = 180]{$\overline{R}$};
\draw (5,-.5) rectangle (7,2.5);
\draw (0,0) -- (1,0);
\draw (7,2) -- (8,2);
\begin{scope}[rounded corners = 1mm]
	\draw (3,0) -- (4,0) -- (5,1);
	\draw (3,1) -- (3.3,1.3);
	\draw (3.7,1.7) -- (4,2) -- (5,2);
	\draw (3,2) -- (4.3,.7);
	\draw (4.7,.3) -- (5,0);
\end{scope}

\draw[->] (3.5,1.5) -- (3.1,1.9);
\draw[->] (3.7,1.7) -- (3.9,1.9);
\draw[->] (4.5,.5) -- (4.9,.9);
\draw[->] (4.3,.7) -- (4.1,.9);

\end{tikzpicture}
		&
 \begin{tikzpicture}[scale=.4]
\draw[white] (-.2,-.7) rectangle (8.2,2.7);
\draw (1,-.5) rectangle (3,2.5);
\draw (5,-.5) rectangle (7,2.5);
\draw (0,0) -- (1,0);
\draw (7,2) -- (8,2);
\begin{scope}[rounded corners = 1mm]
	\draw[->] (3,1) -- (3.4,1.5) -- (3,2);
	\draw[->] (5,0) -- (4.6,.5) -- (5,1);
	\draw[->] (3,0) -- (4,0) -- (4.4,.5) -- (4,1) -- (3.6,1.5) -- (4,2) -- (5,2);
\end{scope}
\draw[densely dashed] (1,0) -- (3,0);
\draw[densely dashed, rounded corners=1mm] (3,2) -- (2.6,1.5) -- (3,1);
\draw[densely dashed] (5,2) -- (7,2);
\draw[densely dashed, rounded corners=1mm] (5,1) -- (5.4,.5) -- (5,0);

\end{tikzpicture}
 \\
\hline
\hline
4B$_{pe}$ & $c$ &\tiny{$--++-+--++$} &
\begin{tikzpicture}[scale=.4]
\draw[white] (-.2,-.7) rectangle (12.2,2.7);
\draw (1,-.5) rectangle (3,2.5);
\draw (2,1) node{$R$};
\draw (10,1) node[rotate = 180]{$\overline{R}$};
\draw (9,-.5) rectangle (11,2.5);
\draw (0,0) -- (1,0);
\draw (11,2) -- (12,2);
\begin{scope}[rounded corners = 1mm]
	\draw (3,0) -- (4.3,1.3);
	\draw (4.7,1.7) -- (5,2) -- (6.3,.7);
	\draw (6.7,.3) -- (7,0) -- (8.3,1.3);
	\draw (8.7,1.7) -- (9,2);
	\draw (3,1) -- (3.3,.7);
	\draw (3.7,.3) -- (4,0) -- (6,0) -- (7,1) -- (7.3,.7);
	\draw (7.7,.3) -- (8,0) -- (9,0);
	\draw (3,2) -- (4,2) -- (5,1) -- (5.3,1.3);
	\draw (5.7,1.7) -- (6,2) -- (8,2) -- (9,1);
\end{scope}
\draw[->] (3.5,.5) -- (3.1,.1);
\draw[->] (3.7,.3) -- (3.9,.1);
\draw[->] (4.5,1.5) -- (4.9,1.1);
\draw[->] (4.3,1.3) -- (4.1,1.1);
\draw[->] (5.5,1.5) -- (5.1,1.9);
\draw[->] (5.7,1.7) -- (5.9,1.9);
\draw[->] (6.5,.5) --(6.9,.9);
\draw[->] (6.3,.7) -- (6.1,.9);
\draw[->] (7.5,.5) -- (7.1,.1);
\draw[->] (7.7,.3) -- (7.9,.1);
\draw[->] (8.5,1.5) -- (8.9,1.1);
\draw[->] (8.3,1.3) -- (8.1,1.1);

\end{tikzpicture}
& 
\begin{tikzpicture}[scale=.4]
\draw[white] (-.2,-.7) rectangle (12.2,2.7);
\draw (1,-.5) rectangle (3,2.5);
\draw (9,-.5) rectangle (11,2.5);
\draw (0,0) -- (1,0);
\draw (11,2) -- (12,2);
\begin{scope}[rounded corners = 1mm]
\draw[->] (3,2) -- (4,2) -- (4.4,1.5) -- (4,1) -- (3.6,.5) -- (4,0) -- (6,0) -- (6.4,.5) -- (6,1) -- (5.6,1.5) -- (6,2) -- (8,2) -- (8.4,1.5) -- (8,1) -- (7.6,.5) -- (8,0) -- (9,0);
\draw[->] (3,1) -- (3.4,.5) -- (3,0);
\draw[->] (9,2) -- (8.6,1.5) -- (9,1);
\draw[->] (5,2) arc (90:450:.4cm and .5cm);
\draw[->] (7,1) arc (90:-270:.4cm and .5cm);
\end{scope}
\draw[densely dashed] (1,0) -- (3,2);
\draw[densely dashed, rounded corners =1mm] (3,0) -- (2.6,.5) -- (3,1);
\draw[densely dashed] (9,0) -- (11,2);
\draw[densely dashed, rounded corners =1mm] (9,1) -- (9.4,1.5) -- (9,2);

\end{tikzpicture}

\\
\hline
4B$_{pe}$ & $c-4$ & \tiny{$--++$} & 
\begin{tikzpicture}[scale=.4]
\draw[white] (-.2,-.7) rectangle (8.2,2.7);
\draw (1,-.5) rectangle (3,2.5);
\draw (2,1) node{$R$};
\draw (6,1) node[rotate = 180]{$\overline{R}$};
\draw (5,-.5) rectangle (7,2.5);
\draw (0,0) -- (1,0);
\draw (7,2) -- (8,2);
\begin{scope}[rounded corners = 1mm]
	\draw (3,0) -- (4.3,1.3);
	\draw (4.7,1.7) -- (5,2);
	\draw (3,1) -- (3.3,.7);
	\draw (3,2) -- (4,2) -- (5,1);
	\draw (3.7,.3) -- (4,0) -- (5,0);
\end{scope}

\draw[->] (3.5,.5) -- (3.1,.1);
\draw[->] (3.7,.3) -- (3.9,.1);
\draw[->] (4.5,1.5) -- (4.9,1.1);
\draw[->] (4.3,1.3) -- (4.1,1.1);

\end{tikzpicture}
		&
 \begin{tikzpicture}[scale=.4]
\draw[white] (-.2,-.7) rectangle (8.2,2.7);
\draw (1,-.5) rectangle (3,2.5);
\draw (5,-.5) rectangle (7,2.5);
\draw (0,0) -- (1,0);
\draw (7,2) -- (8,2);
\begin{scope}[rounded corners = 1mm]
	\draw[->] (3,1) -- (3.4,.5) -- (3,0);
	\draw[->] (5,2) -- (4.6,1.5) -- (5,1);
	\draw[->] (3,2) -- (4,2) -- (4.4,1.5) -- (4,1) -- (3.6,.5) -- (4,0) -- (5,0);
\end{scope}
\draw[densely dashed] (1,0) -- (3,2);
\draw[densely dashed, rounded corners=1mm] (3,0) -- (2.6,.5) -- (3,1);
\draw[densely dashed] (5,0) -- (7,2);
\draw[densely dashed, rounded corners=1mm] (5,1) -- (5.4,1.5) -- (5,2);

\end{tikzpicture}
\\
\hline
\end{tabular}
\caption{Alternating diagrams and Seifert states corresponding to the even palindromic cases in the proof of Theorem \ref{thm:Seifertrecursionpalindrome}.}
\label{tab:SeifertPalindromeEven}
\end{table}


\begin{table}
\begin{tabular}{|c|c||c|c|c|}
\hline
Case & Crossing & String & Alternating Diagram & Seifert state \\
 & Number & & & \\
\hline
\hline
1$_{po}$ & $c$ & \tiny{$+-{}-+$} & 
\begin{tikzpicture}[scale = .4]
\draw[white] (-.2,-.7) -- (9.2,2.7);
\draw (0,0) -- (1,0);
\draw (1,-.5) rectangle (3,2.5);
\draw (6,-.5) rectangle (8,2.5);
\draw (2,1) node{$R$};
\draw (7,1) node{$\reflectbox{R}$};
\draw (8,0) -- (9,0);
\begin{scope}[rounded corners = 1mm]
	\draw (3,0) -- (4,1) -- (4.3,.7);
	\draw (4.7,.3) -- (5,0) -- (6,1);
	\draw (3,1) -- (3.3,.7);
	\draw (3.7,.3) -- (4,0) -- (5,1) --(5.3,.7);
	\draw (5.7,.3) -- (6,0);
	\draw (3,2) -- (6,2);
\end{scope}
\draw[->] (3.5,.5) -- (3.9,.9);
\draw[->] (3.7,.3) -- (3.9,.1);
\draw[->] (4.5,.5) -- (4.9,.9);
\draw[->] (4.7,.3) -- (4.9,.1);
\draw[->] (5.5,.5) -- (5.9,.9);
\draw[->] (5.7,.3) -- (5.9,.1);
\draw[->] (4,2) -- (3.2,2);

\end{tikzpicture}
 & 
 \begin{tikzpicture}[scale = .4]
\draw[white] (-.2,-.7) -- (9.2,2.7);
\draw (0,0) -- (1,0);
\draw (1,-.5) rectangle (3,2.5);
\draw (6,-.5) rectangle (8,2.5);
\draw (8,0) -- (9,0);
\begin{scope}[rounded corners = 1mm]
	\draw[->] (3,0) -- (3.5,.4) -- (4,0) -- (4.5,.4) -- (5,0) -- (5.5,.4) -- (6,0);
	\draw[->] (3,1) -- (3.5,.6) -- (4,1) -- (4.5,.6) -- (5,1) -- (5.5,.6) -- (6,1);
	\draw[->] (6,2) -- (3,2);
	\draw[densely dashed] (1,0) -- (3,0);
	\draw[densely dashed] (3,1) -- (2.6,1.5) -- (3,2);
	\draw[densely dashed] (6,1) -- (6.4,1.5) -- (6,2);
	\draw[densely dashed] (6,0) -- (8,0);
\end{scope}

 \end{tikzpicture}
 
  \\
\hline
1$_{po}$ & $c-1$ & \tiny{$+$} & 
\begin{tikzpicture}[scale = .4]
\draw[white] (-.2,-.7) -- (7.2,2.7);
\draw (0,0) -- (1,0);
\draw (1,-.5) rectangle (3,2.5);
\draw (4,-.5) rectangle (6,2.5);
\draw (2,1) node{$R$};
\draw (5,1) node{$\reflectbox{R}$};
\draw (6,0) -- (7,0);
\draw (3,0) -- (4,1);
\draw (3,1) -- (3.3,.7);
\draw (3.7,.3) -- (4,0);
\draw (3,2) -- (4,2);
\draw[->] (3.5,.5) -- (3.9,.9);
\draw[->] (3.7,.3) -- (3.9,.1);
\draw[->] (4,2) -- (3.2,2);

\end{tikzpicture}
	&
 \begin{tikzpicture}[scale = .4]
\draw[white] (-.2,-.7) -- (7.2,2.7);
\draw (0,0) -- (1,0);
\draw (1,-.5) rectangle (3,2.5);
\draw (4,-.5) rectangle (6,2.5);
\draw (6,0) -- (7,0);
\begin{scope}[rounded corners = 1mm]
	\draw[->] (3,0) -- (3.5,.4) -- (4,0);
	\draw[->] (3,1) -- (3.5,.6) -- (4,1);
	\draw[->] (4,2) -- (3,2);
	\draw[densely dashed] (1,0) -- (3,0);
	\draw[densely dashed] (3,1) -- (2.6,1.5) -- (3,2);
	\draw[densely dashed] (4,1) -- (4.4,1.5) -- (4,2);
	\draw[densely dashed] (4,0) -- (6,0);
\end{scope}

 \end{tikzpicture}	

 \\
\hline
\hline
2$_{po}$ & $c$ & \tiny{$++-++$} & 
\begin{tikzpicture}[scale = .4]
\draw[white] (-.2,-.7) -- (9.2,2.7);
\draw (0,0) -- (1,0);
\draw (1,-.5) rectangle (3,2.5);
\draw (6,-.5) rectangle (8,2.5);
\draw (2,1) node{$R$};
\draw (7,1) node{$\reflectbox{R}$};
\draw (8,0) -- (9,0);
\begin{scope}[rounded corners = 1mm]
	\draw (3,2) -- (4,1) -- (4.3,1.3);
	\draw (4.7,1.7) -- (5,2) -- (6,1);
	\draw (3,1) -- (3.3,1.3);
	\draw (3.7,1.7) -- (4,2) -- (5,1) -- (5.3,1.3);
	\draw (5.7,1.7) -- (6,2);
	\draw (3,0) -- (6,0);

\end{scope}
\draw[->] (3.5,1.5) -- (3.9,1.1);
\draw[->] (3.7,1.7) -- (3.9,1.9);
\draw[->] (4.5,1.5) -- (4.9,1.1);
\draw[->] (4.7,1.7) -- (4.9,1.9);
\draw[->] (5.5,1.5) -- (5.9,1.1);
\draw[->] (5.7,1.7) -- (5.9,1.9);
\draw[->] (4,0) -- (3.2,0);

\end{tikzpicture}

 &
 \begin{tikzpicture}[scale = .4]
\draw[white] (-.2,-.7) -- (9.2,2.7);
\draw (0,0) -- (1,0);
\draw (1,-.5) rectangle (3,2.5);
\draw (6,-.5) rectangle (8,2.5);
\draw (8,0) -- (9,0);
\begin{scope}[rounded corners = 1mm]
	\draw[->] (3,2) -- (3.5,1.6) -- (4,2) -- (4.5,1.6) -- (5,2) -- (5.5,1.6) -- (6,2);
	\draw[->] (3,1) -- (3.5,1.4) -- (4,1) -- (4.5,1.4) -- (5,1) -- (5.5,1.4) -- (6,1);
	\draw[->] (6,0) -- (3,0);
	\draw[densely dashed] (1,0) -- (3,2);
	\draw[densely dashed] (3,1) -- (2.6,.5) -- (3,0);
	\draw[densely dashed] (6,1) -- (6.4,.5) -- (6,0);
	\draw[densely dashed] (6,2) -- (8,0);
\end{scope}

 \end{tikzpicture}
  \\
\hline
2$_{po}$ & $c-1$ & \tiny{$++$} & 
\begin{tikzpicture}[scale = .4]
\draw[white] (-.2,-.7) -- (7.2,2.7);
\draw (0,0) -- (1,0);
\draw (1,-.5) rectangle (3,2.5);
\draw (4,-.5) rectangle (6,2.5);
\draw (2,1) node{$R$};
\draw (5,1) node{$\reflectbox{R}$};
\draw (6,0) -- (7,0);
\draw (3,2) -- (4,1);
\draw (3,1) -- (3.3,1.3);
\draw (3.7,1.7) -- (4,2);
\draw (3,0) -- (4,0);
\draw[->] (3.5,1.5) -- (3.9,1.1);
\draw[->] (3.7,1.7) -- (3.9,1.9);
\draw[->] (4,0) -- (3.2,0);

\end{tikzpicture}
	&
 \begin{tikzpicture}[scale = .4]
\draw[white] (-.2,-.7) -- (7.2,2.7);
\draw (0,0) -- (1,0);
\draw (1,-.5) rectangle (3,2.5);
\draw (4,-.5) rectangle (6,2.5);
\draw (6,0) -- (7,0);
\begin{scope}[rounded corners = 1mm]
	\draw[->] (3,2) -- (3.5,1.6) -- (4,2);
	\draw[->] (3,1) -- (3.5,1.4) -- (4,1);
	\draw[->] (4,0) -- (3,0);
	\draw[densely dashed] (1,0) -- (3,2);
	\draw[densely dashed] (3,1) -- (2.6,.5) -- (3,0);
	\draw[densely dashed] (4,1) -- (4.4,.5) -- (4,0);
	\draw[densely dashed] (4,2) -- (6,0);
\end{scope}

 \end{tikzpicture}

\\
\hline
\hline
3A$_{po}$ & $c$ & \tiny{$-+-+-$} &
\begin{tikzpicture}[scale = .4]
\draw[white] (-.2,-.7) -- (11.2,2.7);
\draw (0,0) -- (1,0);
\draw (1,-.5) rectangle (3,2.5);
\draw (8,-.5) rectangle (10,2.5);
\draw (2,1) node{$R$};
\draw (9,1) node{$\reflectbox{R}$};
\draw (10,0) -- (11,0);
\begin{scope}[rounded corners = 1mm]
	\draw (3,0) -- (4,0) -- (5.3,1.3);
	\draw (5.7,1.7) -- (6,2) -- (7,2) -- (8,1);
	\draw (3,1) -- (3.3,1.3);
	\draw (3.7,1.7) -- (4,2) -- (5,2) -- (6.3,.7);
	\draw (6.7,.3) -- (7,0) -- (8,0);
	\draw (3,2) -- (4.3,.7);
	\draw (4.7,.3) -- (5,0) -- (6,0) -- (7.3,1.3);
	\draw (7.7,1.7) -- (8,2);

\end{scope}
\draw[->] (3.5,1.5) -- (3.1,1.9);
\draw[->] (3.7,1.7) -- (3.9,1.9);
\draw[->] (4.5,.5) -- (4.9,.9);
\draw[->] (4.3,.7) -- (4.1,.9);
\draw[->] (5.5,1.5) -- (5.9,1.1);
\draw[->] (5.7,1.7) -- (5.9,1.9);
\draw[->] (6.5,.5) -- (6.1,.1);
\draw[->] (6.7,.3) -- (6.9,.1);
\draw[->] (7.5,1.5) -- (7.9,1.1);
\draw[->] (7.3,1.3) -- (7.1,1.1);

\end{tikzpicture}

 &
 \begin{tikzpicture}[scale = .4]
\draw[white] (-.2,-.7) -- (11.2,2.7);
\draw (0,0) -- (1,0);
\draw (1,-.5) rectangle (3,2.5);
\draw (8,-.5) rectangle (10,2.5);
\draw (10,0) -- (11,0);
\begin{scope}[rounded corners = 1mm]
	\draw[->] (3,1) -- (3.4,1.5) -- (3,2);
	\draw[->] (8,2) -- (7.6,1.5) -- (8,1);
	\draw[->] (5.5,0) -- (5,0) -- (4.6,.5) -- (5,1) -- (5.5,1.4) -- (6,1) -- (6.4,.5) -- (6,0) -- (5.5,0);
	\draw[->] (3,0) --(4,0) -- (4.4,.5) -- (4,1) -- (3.6,1.5) -- (4,2) -- (5,2) -- (5.5,1.6) -- (6,2) -- (7,2) -- (7.4,1.5) -- (7,1) -- (6.6,.5) -- (7,0) -- (8,0);
	\draw[densely dashed] (1,0) -- (3,0);
	\draw[densely dashed] (3,1) -- (2.6,1.5) -- (3,2);
	\draw[densely dashed] (8,1) -- (8.4,1.5) -- (8,2);
	\draw[densely dashed] (8,0) -- (10,0);
\end{scope}

 \end{tikzpicture}
 \\
\hline
3A$_{po}$ & $c-4$ &\tiny{$--$} & 
\begin{tikzpicture}[scale = .4]
\draw[white] (-.2,-.7) -- (7.2,2.7);
\draw (0,0) -- (1,0);
\draw (1,-.5) rectangle (3,2.5);
\draw (4,-.5) rectangle (6,2.5);
\draw (2,1) node{$R$};
\draw (5,1) node{$\reflectbox{R}$};
\draw (6,0) -- (7,0);
\draw (3,0) -- (4,1);
\draw (3,1) -- (3.3,.7);
\draw (3.7,.3) -- (4,0);
\draw (3,2) -- (4,2);
\draw[->] (3.5,.5) -- (3.9,.9);
\draw[->] (3.7,.3) -- (3.9,.1);
\draw[->] (4,2) -- (3.2,2);

\end{tikzpicture}
	&
 \begin{tikzpicture}[scale = .4]
\draw[white] (-.2,-.7) -- (7.2,2.7);
\draw (0,0) -- (1,0);
\draw (1,-.5) rectangle (3,2.5);
\draw (4,-.5) rectangle (6,2.5);
\draw (6,0) -- (7,0);
\begin{scope}[rounded corners = 1mm]
	\draw[->] (3,0) -- (3.5,.4) -- (4,0);
	\draw[->] (3,1) -- (3.5,.6) -- (4,1);
	\draw[->] (4,2) -- (3,2);
	\draw[densely dashed] (1,0) -- (3,0);
	\draw[densely dashed] (3,1) -- (2.6,1.5) -- (3,2);
	\draw[densely dashed] (4,1) -- (4.4,1.5) -- (4,2);
	\draw[densely dashed] (4,0) -- (6,0);
\end{scope}

 \end{tikzpicture}	 \\
\hline
\hline
3B$_{po}$ & $c$ & \tiny{$-{}-+-+-{}-$} & 
\begin{tikzpicture}[scale = .4]
\draw[white] (-.2,-.7) -- (11.2,2.7);
\draw (0,0) -- (1,0);
\draw (1,-.5) rectangle (3,2.5);
\draw (8,-.5) rectangle (10,2.5);
\draw (2,1) node{$R$};
\draw (9,1) node{$\reflectbox{R}$};
\draw (10,0) -- (11,0);
\begin{scope}[rounded corners = 1mm]
	\draw (3,0) -- (4,1) -- (4.3,.7);
	\draw (4.7,.3) -- (5,0) -- (6,0) -- (7,1) -- (7.3,.7);
	\draw (7.7,.3) -- (8,0);
	\draw (3,1) -- (3.3,.7);
	\draw (3.7,.3) -- (4,0) -- (5.3,1.3);
	\draw (5.7,1.7) -- (6,2) -- (8,2);
	\draw (3,2) -- (5,2) -- (6.3,.7);
	\draw (6.7,.3) -- (7,0) -- (8,1);
\end{scope}
\draw[->] (3.5,.5) -- (3.1,.1);
\draw[->] (3.7,.3) -- (3.9,.1);
\draw[->] (4.5,.5) -- (4.9,.9);
\draw[->] (4.3,.7) -- (4.1,.9);
\draw[->] (5.5,1.5) -- (5.9,1.1);
\draw[->] (5.7,1.7) -- (5.9,1.9);
\draw[->] (6.5,.5) -- (6.1,.1);
\draw[->] (6.7,.3) -- (6.9,.1);
\draw[->] (7.5,.5) -- (7.9,.9);
\draw[->] (7.3,.7) -- (7.1,.9);

\end{tikzpicture}

 &
 \begin{tikzpicture}[scale = .4]
\draw[white] (-.2,-.7) -- (11.2,2.7);
\draw (0,0) -- (1,0);
\draw (1,-.5) rectangle (3,2.5);
\draw (8,-.5) rectangle (10,2.5);
\draw (10,0) -- (11,0);
\begin{scope}[rounded corners = 1mm]
	\draw[->] (3,2) -- (5,2) -- (5.5,1.6) -- (6,2) -- (8,2);
	\draw[->] (3,1) -- (3.4,.5) -- (3,0);
	\draw[->] (8,0) -- (7.6,.5) -- (8,1);
	\draw[->] (4,1) arc (90:450:.4cm and .5cm);
	\draw[->] (7,1) arc (90:450:.4cm and .5cm);
	\draw[->] (5.5,0) -- (5,0) -- (4.6,.5) -- (5,1) --(5.5,1.4) -- (6,1) -- (6.4,.5) -- (6,0) -- (5.5,0);
	\draw[densely dashed] (1,0) -- (3,2);
	\draw[densely dashed] (3,1) -- (2.6,.5) -- (3,0);
	\draw[densely dashed] (8,1) -- (8.4,.5) -- (8,0);
	\draw[densely dashed] (8,2) -- (10,0);
\end{scope}

 \end{tikzpicture}
 \\
\hline
3B$_{po}$ & $c-4$ & \tiny{$-$} & 
\begin{tikzpicture}[scale = .4]
\draw[white] (-.2,-.7) -- (7.2,2.7);
\draw (0,0) -- (1,0);
\draw (1,-.5) rectangle (3,2.5);
\draw (4,-.5) rectangle (6,2.5);
\draw (2,1) node{$R$};
\draw (5,1) node{$\reflectbox{R}$};
\draw (6,0) -- (7,0);
\draw (3,2) -- (4,1);
\draw (3,1) -- (3.3,1.3);
\draw (3.7,1.7) -- (4,2);
\draw (3,0) -- (4,0);
\draw[->] (3.5,1.5) -- (3.9,1.1);
\draw[->] (3.7,1.7) -- (3.9,1.9);
\draw[->] (4,0) -- (3.2,0);

\end{tikzpicture}
	&
 \begin{tikzpicture}[scale = .4]
\draw[white] (-.2,-.7) -- (7.2,2.7);
\draw (0,0) -- (1,0);
\draw (1,-.5) rectangle (3,2.5);
\draw (4,-.5) rectangle (6,2.5);
\draw (6,0) -- (7,0);
\begin{scope}[rounded corners = 1mm]
	\draw[->] (3,2) -- (3.5,1.6) -- (4,2);
	\draw[->] (3,1) -- (3.5,1.4) -- (4,1);
	\draw[->] (4,0) -- (3,0);
	\draw[densely dashed] (1,0) -- (3,2);
	\draw[densely dashed] (3,1) -- (2.6,.5) -- (3,0);
	\draw[densely dashed] (4,1) -- (4.4,.5) -- (4,0);
	\draw[densely dashed] (4,2) -- (6,0);
\end{scope}

 \end{tikzpicture}

\\
\hline
\hline
4A$_{po}$ & $c$ & \tiny{$-++-{}-++-$} &
\begin{tikzpicture}[scale = .4]
\draw[white] (-.2,-.7) -- (11.2,2.7);
\draw (0,0) -- (1,0);
\draw (1,-.5) rectangle (3,2.5);
\draw (8,-.5) rectangle (10,2.5);
\draw (2,1) node{$R$};
\draw (9,1) node{$\reflectbox{R}$};
\draw (10,0) -- (11,0);
\begin{scope}[rounded corners = 1mm]
	\draw (3,0) --(5,0) -- (6.3,1.3);
	\draw (6.7,1.7) -- (7,2) --(8,1);
	\draw (3,1) -- (3.3,1.3);
	\draw (3.7,1.7) -- (4,2) -- (5.3,.7);
	\draw (5.7,.3) -- (6,0) -- (8,0);
	\draw (3,2) -- (4,1) -- (4.3,1.3);
	\draw (4.7,1.7) -- (5,2) -- (6,2) -- (7,1) -- (7.3,1.3);
	\draw (7.7,1.7) -- (8,2);
\end{scope}
\draw[->] (3.5,1.5) -- (3.1,1.9);
\draw[->] (3.7,1.7) -- (3.9,1.9);
\draw[->] (4.5,1.5) -- (4.9,1.1);
\draw[->] (4.3,1.3) -- (4.1,1.1);
\draw[->] (5.5,.5) -- (5.9,.9);
\draw[->] (5.7,.3) -- (5.9,.1);
\draw[->] (6.5,1.5) -- (6.1,1.9);
\draw[->] (6.7,1.7) -- (6.9,1.9);
\draw[->] (7.5,1.5) -- (7.9,1.1);
\draw[->] (7.3,1.3) -- (7.1,1.1);

\end{tikzpicture}

 &
 \begin{tikzpicture}[scale = .4]
\draw[white] (-.2,-.7) -- (11.2,2.7);
\draw (0,0) -- (1,0);
\draw (1,-.5) rectangle (3,2.5);
\draw (8,-.5) rectangle (10,2.5);
\draw (10,0) -- (11,0);
\begin{scope}[rounded corners = 1mm]
	\draw[->] (3,0) -- (5,0) -- (5.5,0.4) -- (6,0) -- (8,0);
	\draw[->] (3,1) -- (3.4,1.5) -- (3,2);
	\draw[->] (8,2) -- (7.6,1.5) -- (8,1);
	\draw[->] (5.5,2) -- (5,2) -- (4.6,1.5) -- (5,1) -- (5.5,.6) -- (6,1) -- (6.4,1.5) -- (6,2) -- (5.5,2);
	\draw[->] (4,2) arc (90:-270:.4cm and .5cm);
	\draw[->] (7,2) arc (90:-270:.4cm and .5cm);
	\draw[densely dashed] (1,0) -- (3,0);
	\draw[densely dashed] (3,1) -- (2.6,1.5) -- (3,2);
	\draw[densely dashed] (8,1) -- (8.4,1.5) -- (8,2);
	\draw[densely dashed] (8,0) -- (10,0);
\end{scope}

 \end{tikzpicture}

 \\
\hline
4A$_{po}$ & $c-4$ & \tiny{$--$} & 
\begin{tikzpicture}[scale = .4]
\draw[white] (-.2,-.7) -- (7.2,2.7);
\draw (0,0) -- (1,0);
\draw (1,-.5) rectangle (3,2.5);
\draw (4,-.5) rectangle (6,2.5);
\draw (2,1) node{$R$};
\draw (5,1) node{$\reflectbox{R}$};
\draw (6,0) -- (7,0);
\draw (3,0) -- (4,1);
\draw (3,1) -- (3.3,.7);
\draw (3.7,.3) -- (4,0);
\draw (3,2) -- (4,2);
\draw[->] (3.5,.5) -- (3.9,.9);
\draw[->] (3.7,.3) -- (3.9,.1);
\draw[->] (4,2) -- (3.2,2);

\end{tikzpicture}
	&
 \begin{tikzpicture}[scale = .4]
\draw[white] (-.2,-.7) -- (7.2,2.7);
\draw (0,0) -- (1,0);
\draw (1,-.5) rectangle (3,2.5);
\draw (4,-.5) rectangle (6,2.5);
\draw (6,0) -- (7,0);
\begin{scope}[rounded corners = 1mm]
	\draw[->] (3,0) -- (3.5,.4) -- (4,0);
	\draw[->] (3,1) -- (3.5,.6) -- (4,1);
	\draw[->] (4,2) -- (3,2);
	\draw[densely dashed] (1,0) -- (3,0);
	\draw[densely dashed] (3,1) -- (2.6,1.5) -- (3,2);
	\draw[densely dashed] (4,1) -- (4.4,1.5) -- (4,2);
	\draw[densely dashed] (4,0) -- (6,0);
\end{scope}

 \end{tikzpicture}\\
\hline
\hline
4B$_{po}$ & $c$ & \tiny{$-{}-++-{}-++-{}-$} &
\begin{tikzpicture}[scale = .4]
\draw[white] (-.2,-.7) -- (11.2,2.7);
\draw (0,0) -- (1,0);
\draw (1,-.5) rectangle (3,2.5);
\draw (8,-.5) rectangle (10,2.5);
\draw (2,1) node{$R$};
\draw (9,1) node{$\reflectbox{R}$};
\draw (10,0) -- (11,0);
\begin{scope}[rounded corners = 1mm]
	\draw (3,0) -- (4.3,1.3);
	\draw (4.7,1.7) -- (5,2) -- (6,2) -- (7.3,.7);
	\draw (7.7,.3) -- (8,0);
	\draw (3,1) -- (3.3,.7);
	\draw (3.7,.3) -- (4,0) -- (5,0) -- (6.3,1.3);
	\draw (6.7,1.7) -- (7,2) -- (8,2);
	\draw (3,2) -- (4,2) -- (5.3,.7);
	\draw (5.7,.3) -- (6,0) -- (7,0) -- (8,1);
\end{scope}
\draw[->] (3.5,.5) -- (3.1,.1);
\draw[->] (3.7,.3) -- (3.9,.1);
\draw[->] (4.5,1.5) -- (4.9,1.1);
\draw[->] (4.3,1.3) -- (4.1,1.1);
\draw[->] (5.5,.5) -- (5.9,.9);
\draw[->] (5.7,.3) -- (5.9,.1);
\draw[->] (6.5,1.5) -- (6.1,1.9);
\draw[->] (6.7,1.7) -- (6.9,1.9);
\draw[->] (7.5,.5) -- (7.9,.9);
\draw[->] (7.3,.7) -- (7.1,.9);

\end{tikzpicture}

 &
 \begin{tikzpicture}[scale = .4]
\draw[white] (-.2,-.7) -- (11.2,2.7);
\draw (0,0) -- (1,0);
\draw (1,-.5) rectangle (3,2.5);
\draw (8,-.5) rectangle (10,2.5);
\draw (10,0) -- (11,0);
\begin{scope}[rounded corners = 1mm]
	\draw[->] (3,2) -- (4,2) -- (4.4,1.5) -- (4,1) -- (3.6,.5) -- (4,0) -- (5,0) -- (5.5,.4) -- (6,0) --(7,0) -- (7.4,.5) -- (7,1) -- (6.6,1.5) -- (7,2) -- (8,2);
	\draw[->] (3,1) -- (3.4,.5) -- (3,0);
	\draw[->] (8,0) -- (7.6,.5) -- (8,1);
	\draw[->] (5.5,2) -- (5,2) -- (4.6,1.5) -- (5,1) --(5.5,.6) -- (6,1) -- (6.4,1.5) -- (6,2) -- (5.5,2);
	\draw[densely dashed] (1,0) -- (3,2);
	\draw[densely dashed] (3,1) -- (2.6,.5) -- (3,0);
	\draw[densely dashed] (8,1) -- (8.4,.5) -- (8,0);
	\draw[densely dashed] (8,2) -- (10,0);
\end{scope}

 \end{tikzpicture}

 \\
\hline
4B$_{po}$ & $c-4$ & \tiny{$-$} &
\begin{tikzpicture}[scale = .4]
\draw[white] (-.2,-.7) -- (7.2,2.7);
\draw (0,0) -- (1,0);
\draw (1,-.5) rectangle (3,2.5);
\draw (4,-.5) rectangle (6,2.5);
\draw (2,1) node{$R$};
\draw (5,1) node{$\reflectbox{R}$};
\draw (6,0) -- (7,0);
\draw (3,2) -- (4,1);
\draw (3,1) -- (3.3,1.3);
\draw (3.7,1.7) -- (4,2);
\draw (3,0) -- (4,0);
\draw[->] (3.5,1.5) -- (3.9,1.1);
\draw[->] (3.7,1.7) -- (3.9,1.9);
\draw[->] (4,0) -- (3.2,0);

\end{tikzpicture}
	&
 \begin{tikzpicture}[scale = .4]
\draw[white] (-.2,-.7) -- (7.2,2.7);
\draw (0,0) -- (1,0);
\draw (1,-.5) rectangle (3,2.5);
\draw (4,-.5) rectangle (6,2.5);
\draw (6,0) -- (7,0);
\begin{scope}[rounded corners = 1mm]
	\draw[->] (3,2) -- (3.5,1.6) -- (4,2);
	\draw[->] (3,1) -- (3.5,1.4) -- (4,1);
	\draw[->] (4,0) -- (3,0);
	\draw[densely dashed] (1,0) -- (3,2);
	\draw[densely dashed] (3,1) -- (2.6,.5) -- (3,0);
	\draw[densely dashed] (4,1) -- (4.4,.5) -- (4,0);
	\draw[densely dashed] (4,2) -- (6,0);
\end{scope}
\end{tikzpicture}
 \\
\hline
\end{tabular}
\caption{Alternating diagrams and Seifert states corresponding to the odd palindromic cases in the proof of Theorem \ref{thm:Seifertrecursionpalindrome}.}
\label{tab:SeifertPalindromeOdd}
\end{table}

\section{Seifert circles and average genus}
\label{sec:formulas}

In Section \ref{sec:recursions}, we find recursive formulas for the total number of Seifert circles $s(c)$ and $s_p(c)$ coming from the alternating diagrams associated to words in $T(c)$ and $T_p(c)$, respectively. In this section, we find closed formulas for $s(c)$ and $s_p(c)$, and then use those formulas to prove Theorem \ref{thm:mainformula}.

The total number $s(c)$ of Seifert circles in the alternating diagrams coming from words in $T(c)$ is given by the following theorem.
\begin{theorem}
\label{thm:s(c)}
Let $c\geq 3$. The number  $s(c)$ of Seifert circles in the alternating diagrams with crossing number $c$ coming from words in $T(c)$ can be expressed as
\[ s(c) = \frac{(3c+5)2^{c-3}+(-1)^c (5-3c)}{9}.\]
\end{theorem}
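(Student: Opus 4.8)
The plan is to solve the linear recurrence $s(c)=s(c-1)+2s(c-2)+3t(c-2)$ of Theorem \ref{thm:Seifertrecursion}, feeding in the closed form $t(c)=\frac{2^{c-2}-(-1)^c}{3}$ from Proposition \ref{prop:countterms}. Since $(-1)^{c-2}=(-1)^c$, the forcing term simplifies to $3t(c-2)=2^{c-4}-(-1)^c$, so the recurrence becomes $s(c)-s(c-1)-2s(c-2)=2^{c-4}-(-1)^c$. First I would pin down the two base cases: $T(3)=\{+-{}-+\}$ yields the alternating diagram $\sigma_1^3$ of the trefoil and $T(4)=\{+-+-\}$ yields $\sigma_1\sigma_2^{-1}\sigma_1\sigma_2^{-1}$, the figure-eight knot, so applying Seifert's algorithm gives $s(3)=2$ and $s(4)=3$ (consistent with both knots having genus $1$ via $g=\tfrac12(1+c-s)$, and with the claimed formula).

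The homogeneous characteristic polynomial is $x^2-x-2=(x-2)(x+1)$ with roots $2$ and $-1$. Because \emph{both} $2^c$ and $(-1)^c$ already solve the homogeneous recurrence, I would seek a particular solution of the resonant form $\alpha\,c\,2^c+\beta\,c\,(-1)^c$; substituting and matching the coefficients of $2^c$ and of $(-1)^c$ separately gives a short linear computation determining $\alpha$ and $\beta$. The general solution is then $s(c)=\alpha\,c\,2^c+\beta\,c\,(-1)^c+\gamma\,2^c+\delta\,(-1)^c$, and imposing $s(3)=2$, $s(4)=3$ fixes $\gamma$ and $\delta$. Collecting the $2^c$ terms and the $(-1)^c$ terms and rewriting $2^c=8\cdot 2^{c-3}$ produces $s(c)=\frac{(3c+5)2^{c-3}+(-1)^c(5-3c)}{9}$.

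Equivalently, and more economically for the write-up, I would simply verify the claimed formula by strong induction: it holds at $c=3,4$, and assuming it at $c-1$ and $c-2$, one substitutes those two closed expressions together with $3t(c-2)=2^{c-4}-(-1)^c$ into the right-hand side of the recurrence and checks that the $2^{c-3}$-coefficients combine to $3c+5$ and the $(-1)^c$-coefficients combine to $5-3c$. The only conceptual point worth flagging is the resonance: since the inhomogeneous term lies entirely in the solution space of the homogeneous recurrence, the particular solution must carry explicit factors of $c$, and this is precisely what produces the linear-in-$c$ coefficients $3c+5$ and $5-3c$ in the final answer; everything else is bookkeeping with the base cases.
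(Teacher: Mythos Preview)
Your proposal is correct and, in its ``economical'' form, matches the paper's proof exactly: the paper verifies the base cases $s(3)=2$, $s(4)=3$ and then substitutes the closed forms for $s(c-1)$, $s(c-2)$, and $3t(c-2)=2^{c-4}-(-1)^{c-4}$ into the recurrence of Theorem~\ref{thm:Seifertrecursion}, checking that the result simplifies to the claimed expression. Your additional discussion of the resonant forcing and the characteristic-polynomial derivation is a nice bonus the paper omits (it remarks instead that the formula was found via the OEIS), but the actual verification step is identical.
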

\begin{proof}
Recall that $s(c)$  satisfies the recurrence relation $s(c) = s(c-1) + 2s(c-2) + 3t(c-2)$ with initial conditions $s(3)=2$ and  $s(4)=3$ and that $3t(c-2) = 2^{c-4}-(-1)^{c-4}$.

Proceed by induction. The base cases of $s(3)=2$ and $s(4)=3$ can be shown by direct computation. The recurrence relation is satisfied because
\begin{align*}
& s(c-1) + 2s(c-2) + 3t(c-2)\\
 = & \; \frac{[3(c-1)+5]2^{(c-1)-3}+(-1)^{c-1}[5-3(c-1)]}{9} \\
  & \; + 2\left(\frac{[3(c-2)+5]2^{(c-2)-3} + (-1)^{c-2}[5-3(c-2)]}{9}\right) + 2^{c-4} - (-1)^{c-4} \\
 = & \; \frac{(3c+2)2^{c-4} + (-1)^c(3c-8)+(3c-1)2^{c-4} + (-1)^c(22-6c) + 9\cdot 2^{c-4} - 9 (-1)^c}{9}\\
  = & \; \frac{(6c+10)2^{c-4} +(-1)^c[(3c-8) +(22-6c) -9]}{9}\\
  = & \; \frac{(3c+5)2^{c-3}+(-1)^c (5-3c)}{9}.
\end{align*}
\end{proof}

The total number $s_p(c)$ of Seifert circles in the alternating diagrams coming from words of palindromic type in $T_p(c)$ is given by the following theorem.
\begin{theorem}
\label{thm:sp(c)}
Let $c\geq 3$. The number $s_p(c)$ of Seifert circles in the alternating diagrams coming from words of palindromic type in $T_p(c)$ can be expressed as
\[s_p(c) = \begin{cases}\displaystyle
\frac{(3c+1)2^{(c-3)/2} + (-1)^{(c-1)/2}(1-3c)}{9} & \text{if $c$ is odd,}\\
\displaystyle
\frac{(3c+4)2^{(c-4)/2} + (-1)^{(c-2)/2}(1-3c)}{9} & \text{if $c$ is even.}
\end{cases}\]
\end{theorem}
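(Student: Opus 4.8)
The plan is to prove both closed formulas by induction on $c$, mirroring the proof of Theorem~\ref{thm:s(c)} but using the second-order recursion $s_p(c)=s_p(c-2)+2s_p(c-4)+6t_p(c-4)$ of Theorem~\ref{thm:Seifertrecursionpalindrome} together with the closed form for $t_p$ from Proposition~\ref{prop:numberpalindromic}. Since the recursion steps by $2$ and by $4$, the induction stays within a fixed parity class, so I would run two parallel arguments, one for odd $c$ and one for even $c$, each requiring two base cases.

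First I would verify the base cases directly. For $c\in\{3,4,5,6\}$ the set $T_p(c)$ is a single word, namely $+-{}-+$, $+-+-$, $+-{}-+-{}-+$, and $+-{}-++-{}-++-$ respectively; applying Seifert's algorithm to the associated alternating diagram with crossing orientations given by Lemma~\ref{lem:or1} yields $s_p(3)=s_p(5)=2$ and $s_p(4)=s_p(6)=3$ (the first two being the standard $2$-strand diagrams of the $(2,3)$- and $(2,5)$-torus knots, and consistent with $s_p(3)=s(3)$, $s_p(4)=s(4)$ since $T(3)=T_p(3)$ and $T(4)=T_p(4)$). These agree with the claimed formulas: for $c=3$ the odd formula gives $\tfrac{10+8}{9}=2$, for $c=5$ it gives $\tfrac{32-14}{9}=2$, for $c=4$ the even formula gives $\tfrac{16+11}{9}=3$, and for $c=6$ it gives $\tfrac{44-17}{9}=3$. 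Observe also that $c=7$ is the first odd value and $c=8$ the first even value to which the recursion applies, and in each case both predecessors ($3,5$ and $4,6$) are among the verified base cases, so the induction reaches every $c\ge 3$.

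For the inductive step with $c$ odd, I would substitute the odd formula for $s_p(c-2)$ and $s_p(c-4)$, along with $t_p(c-4)=\tfrac{1}{3}\bigl(2^{(c-5)/2}-(-1)^{(c-5)/2}\bigr)$, into the recursion and clear the common denominator $9$. The only delicate point is the sign bookkeeping: $(-1)^{(c-3)/2}=-(-1)^{(c-1)/2}$ while $(-1)^{(c-5)/2}=(-1)^{(c-1)/2}$, and writing $6t_p(c-4)$ over $9$ contributes $18\cdot 2^{(c-5)/2}-18(-1)^{(c-1)/2}$ to the numerator. Collecting terms, the coefficient of $2^{(c-5)/2}$ becomes $(3c-5)+(3c-11)+18=6c+2=2(3c+1)$ and the coefficient of $(-1)^{(c-1)/2}$ becomes $(3c-7)+(26-6c)-18=1-3c$, so the numerator equals $(3c+1)2^{(c-3)/2}+(1-3c)(-1)^{(c-1)/2}$, which is precisely $9\,s_p(c)$. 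The even case is entirely analogous, using $(-1)^{(c-4)/2}=-(-1)^{(c-2)/2}$, $(-1)^{(c-6)/2}=(-1)^{(c-2)/2}$, and $t_p(c-4)=\tfrac{1}{3}\bigl(2^{(c-6)/2}-(-1)^{(c-6)/2}\bigr)$; the same collection yields the coefficient $(3c-2)+(3c-8)+18=6c+8=2(3c+4)$ of $2^{(c-6)/2}$ and the coefficient $(3c-7)+(26-6c)-18=1-3c$ of $(-1)^{(c-2)/2}$.

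I expect the main obstacle to be purely bookkeeping rather than conceptual: keeping the several half-integer sign exponents aligned across the three terms of the recursion (and inside $t_p$), and seeding the parity-$2$ induction so that every $c\ge 3$ is covered. Once the sign accounting is pinned down, the algebra is the same routine manipulation already carried out in the proof of Theorem~\ref{thm:s(c)}.
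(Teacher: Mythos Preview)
Your proposal is correct and follows essentially the same approach as the paper: induction on $c$ within each parity class, using the recursion $s_p(c)=s_p(c-2)+2s_p(c-4)+6t_p(c-4)$ from Theorem~\ref{thm:Seifertrecursionpalindrome} together with the closed form for $t_p$ from Proposition~\ref{prop:numberpalindromic}, with the four base cases $s_p(3)=s_p(5)=2$ and $s_p(4)=s_p(6)=3$. Your sign bookkeeping and the collected coefficients match the paper's computation line for line.
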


\begin{proof}
Recall that $s_p(c)$  satisfies the recurrence relation $s_p(c) = s_p(c-2) + 2s_p(c-4) + 6t_p(c-4)$ with initial conditions $s_p(3)=2,$ $s_p(4)=3$, $s_p(5)=2$, and $s_p(6) = 3$ .

Proceed by induction. One may verify the initial conditions by direct computation. Since the recursion relation for $s_p(c)$ either involves only odd indexed terms or only even indexed terms, we handle each case separately. Suppose $c$ is odd. Then Proposition \ref{prop:numberpalindromic} implies that $ t_p(c-4) =  J(\frac{c-5}{2}) = \frac{2^{(c-5)/2} - (-1)^{(c-5)/2})}{3}$. Thus
\begin{align*}
& \;s_p(c-2) + 2s_p(c-4) + 6t_p(c-4)\\
= & \; \frac{ (3(c-2)+1) 2^{ ((c-2)-3)/2 }  + (-1)^{ ((c-2)-1)/2 } (1-3(c-2)) } { 9 }\\
 & \; + 2\left(\frac{(3(c-4)+1)2^{((c-4)-3)/2} + (-1)^{((c-4)-1)/2}(1-3(c-4))}{9}\right) + 6\left(\frac{2^{(c-5)/2} - (-1)^{(c-5)/2}}{3}\right)\\
 = &\; \frac{ (3c-5) 2^{(c-5)/2} + (-1)^{(c-3)/2}(7-3c)}{9}\\
 & \; + \frac{(3c-11) 2^{(c-5)/2} +(-1)^{(c-5)/2}(26-6c)}{9} + \frac{18 \cdot 2^{(c-5)/2} -(-1)^{(c-5)/2} \cdot 18}{9}\\
= & \; \frac{(6c+2)2^{(c-5)/2}  + (-1)^{(c-1)/2}((3c-7) + (26-6c) -18)}{9}\\
= & \; \frac{(3c+1)2^{(c-3)/2} + (-1)^{(c-1)/2}(1-3c)}{9}.
\end{align*}

Suppose $c$ is even. Then Proposition \ref{prop:numberpalindromic} implies $t_p(c-4) = J(\frac{c-6}{2})= \frac{2^{(c-6)/2} - (-1)^{(c-6)/2}}{3}$. Thus
\begin{align*}
& \; s_p(c-2) + 2s_p(c-4) + 6t_p(c-4)  \\
= & \; \frac{ (3(c-2)+4)2^{((c-2)-4)/2} + (-1)^{((c-2)-2)/2}(1-3(c-2))}{9}\\
& \; + 2\left( \frac{ (3(c-4)+4)2^{((c-4)-4)/2} + (-1)^{((c-4)-2)/2}(1-3(c-4))}{9} \right) + 6\left(\frac{2^{(c-6)/2} - (-1)^{(c-6)/2}}{3}\right)\\
= & \; \frac{(3c-2) 2^{(c-6)/2} + (-1)^{(c-4)/2}(7-3c)}{9} \\
& + \; \frac{(3c-8)2^{(c-6)/2} + (-1)^{(c-6)/2} (26-6c)}{9} + \frac{18\cdot 2^{(c-6)/2} - (-1)^{(c-6)/2}\cdot 18}{9}\\
= & \; \frac{ (6c+8)2^{(c-6)/2} + (-1)^{(c-2)/2} ((3c-7) + (26-6c) -18)}{9}\\
= & \; \frac{(3c+4)2^{(c-4)/2} + (-1)^{(c-2)/2}(1-3c)}{9}.
\end{align*}
\end{proof}

Although the proofs of Theorems \ref{thm:s(c)} and \ref{thm:sp(c)} are straightforward, finding the formulas for $s(c)$ and $s_p(c)$ involved combining several closed formulas found in the Online Encyclopedia of Integer Sequences \cite{OEIS}. We use the formulas for $|\mathcal{K}_c|$, $s(c)$, and $s_p(c)$ in Theorems \ref{thm:ernstsumners}, \ref{thm:s(c)}, and \ref{thm:sp(c)}, respectively to prove Theorem \ref{thm:mainformula}.
\begin{proof}[Proof of Theorem \ref{thm:mainformula}]
If $K$ is an alternating knot, then Murasugi \cite{Mur:genus} and Crowell \cite{Cro:genus} showed that its genus is $g(K) = \frac{1}{2}(1+c(K)-s(K))$  where $c(K)$ and $s(K)$ are the crossing number and number of components in the Seifert state of a reduced alternating diagram of $K$. Theorem \ref{thm:list} implies that 
\[\sum_{K\in\mathcal{K}_c} s(K) = \frac{1}{2}(s(c) + s_p(c)).\]
As in Equation (\ref{eq:avgenus}), the average genus $\overline{g}_c$ satisfies
\[
\overline{g}_c =  \frac{\sum_{K\in\mathcal{K}_c} g(K)}{|\mathcal{K}_c|}
=  \frac{\sum_{K\in\mathcal{K}_c} (1 + c - s(K))}{2|\mathcal{K}_c|}
=  \frac{1}{2} + \frac{c}{2} - \frac{s(c) + s_p(c)}{4|\mathcal{K}_c|}.\]

Theorems \ref{thm:ernstsumners}, \ref{thm:s(c)} and \ref{thm:sp(c)} contain expressions for $|\mathcal{K}_c|$, $s(c)$, and $s_p(c)$ that depend on $c$ mod $4$. If $c\equiv 0$ mod $4$, then 
\begin{align*}
\frac{s(c) + s_p(c)}{4|\mathcal{K}_c|} = & \;  \frac{(3c+5)2^{c-3}+(5-3c) + (3c+4)2^{(c-4)/2} +(3c-1)}{12(2^{c-3}+2^{(c-4)/2}}\\
= & \;  \frac{(3c+5)2^{c-3}+ (3c+5)2^{(c-4)/2}  -  2^{(c-4)/2} + 4}{12(2^{c-3}+2^{(c-4)/2})}\\
= & \; \frac{(3c+5)(2^{c-3}+2^{(c-4)/2})}{12(2^{c-3}+2^{(c-4)/2})} + \frac{4 - 2^{(c-4)/2}}{12(2^{c-3}+2^{(c-4)/2})}\\
= & \; \frac{c}{4} + \frac{5}{12} + \frac{4 - 2^{(c-4)/2}}{12(2^{c-3}+2^{(c-4)/2})}.
\end{align*}
When $c\equiv 0$ mod $4$, the average genus is
\[\overline{g}_c = \frac{c}{4} + \frac{1}{12} +  \frac{2^{(c-4)/2}-4 }{12(2^{c-3}+2^{(c-4)/2})}.\]
The cases where $c\equiv 1$, $2$, or $3$ mod $4$ are similar.

\end{proof}

\bibliographystyle{amsalpha}

\begin{thebibliography}{BKLMR19}

\bibitem[BKLMR19]{BKLMR}
Sebastian Baader, Alexandra Kjuchukova, Lukas Lewark, Filip Misev, and Arunima
  Ray, \emph{Average four-genus of two-bridge knots}, arXiv:1902.05721. To
  appear in Proc. Amer. Math. Soc., 2019.

\bibitem[CEZK18]{CoEZKr}
Moshe Cohen, Chaim Even-Zohar, and Sunder~Ram Krishnan, \emph{Crossing numbers
  of random two-bridge knots}, Topology Appl. \textbf{247} (2018), 100--114.

\bibitem[CK15]{CoKr}
Moshe Cohen and Sunder~Ram Krishnan, \emph{Random knots using {C}hebyshev
  billiard table diagrams}, Topol. Appl. \textbf{194} (2015), 4--21.

\bibitem[Coh21a]{Co:3-bridge}
Moshe Cohen, \emph{The {J}ones polynomials of three-bridge knots via
  {C}hebyshev knots and billiard table diagrams}, J. Knot Theory Ramifications
  \textbf{30} (2021), no.~13, 29pp.

\bibitem[Coh21b]{Coh:lower}
\bysame, \emph{A lower bound on the average genus of a 2-bridge knot},
  arXiv:2108.00563, 2021.

\bibitem[Cro59]{Cro:genus}
Richard Crowell, \emph{Genus of alternating link types}, Ann. of Math. (2)
  \textbf{69} (1959), 258--275.

\bibitem[Dun14]{Dun:knots}
Nathan Dunfield, \emph{Random knots: a preliminary report}, Slides for the talk
  available at http:// dunfield.info/preprints, 2014.

\bibitem[ES87]{ErnSum}
Claus Ernst and De~Witt Sumners, \emph{The growth of the number of prime
  knots}, Math. Proc. Cambridge Philos. Soc. \textbf{102} (1987), no.~2,
  303--315.

\bibitem[KP11a]{KosPec4}
P.-V. Koseleff and D.~Pecker, \emph{Chebyshev diagrams for two-bridge knots},
  Geom. Dedicata \textbf{150} (2011), 405--425.

\bibitem[KP11b]{KosPec3}
\bysame, \emph{Chebyshev knots}, J. Knot Theory Ramifications \textbf{20}
  (2011), no.~4, 575--593.

\bibitem[Mur58]{Mur:genus}
Kunio Murasugi, \emph{On the genus of the alternating knot. {I}, {II}}, J.
  Math. Soc. Japan \textbf{10} (1958), 94--105, 235--248.

\bibitem[OEI22a]{OEIS}
\emph{The {O}n-{L}ine {E}ncyclopedia of {I}nteger {S}equences},
  http://oeis.org, 2022.

\bibitem[OEI22b]{OEIS1045}
\emph{The {O}n-{L}ine {E}ncyclopedia of {I}nteger {S}equences},
  http://oeis.org, 2022, Sequence A001045.

\bibitem[RD22]{RayDiao}
Dawn Ray and Yuanan Diao, \emph{The average genus of oriented rational links
  with a given crossing number}, arXiv:2204.12538, 2022.

\bibitem[Sch56]{Sch}
Horst Schubert, \emph{Knoten mit zwei {B}r\"ucken}, Math. Z. \textbf{65}
  (1956), 133--170.

\bibitem[Sei35]{Sei}
H.~Seifert, \emph{\"{U}ber das {G}eschlecht von {K}noten}, Math. Ann.
  \textbf{110} (1935), no.~1, 571--592.

\bibitem[ST22]{SuzukiTran}
Masaaki Suzuki and Anh~T. Tran, \emph{Genera and crossing numbers of $2$-bridge
  knots}, arXiv:2204.09238, 2022.


\end{thebibliography}

\end{document}